\definecolor{navy}{HTML}{2F729C}
\definecolor{red1}{HTML}{FF0000}
\let\uml\"
\newcolumntype{C}[1]{>{\centering\arraybackslash}p{#1}}
\newcommand{\Fl}{\mathbb{F}_{\ell}}
\newcommand{\Zl}{\mathbb{Z}_{\ell}}
\newcommand{\im}{\operatorname{im}}
\newcommand{\Z}{\mathbb{Z}}
\newcommand{\F}{\mathbb{F}}
\newcommand{\Q}{\mathbb{Q}}
\newcommand{\GL}{\operatorname{GL}}
\newcommand{\ddef}{\colonequals}
\newcommand{\tors}{\mathsf{tors}}
\newcommand{\abs}[1]{\lvert {#1} \rvert}
\numberwithin{equation}{section}
\theoremstyle{plain}
\newtheorem{thm}[equation]{Theorem}
\newtheorem{lem}[equation]{Lemma}
\newtheorem{cor}[equation]{Corollary}
\newtheorem{prop}[equation]{Proposition}
\theoremstyle{remark}
\newtheorem{rmk}[equation]{Remark}
\newtheorem{exm}[equation]{Example}
\begin{document}

\title[Tamagawa Numbers of Elliptic Curves with an $\ell$-isogeny]{Tamagawa Numbers of Elliptic Curves \\ with an $\ell$-isogeny}

\author{Alexander J. Barrios}
\address{Department of Mathematics, University of St. Thomas, St. Paul, MN 55105, USA}
\email{abarrios@stthomas.edu}
\urladdr{\url{https://sites.google.com/site/barriosalex/home}}

\author{John Cullinan}
\address{Department of Mathematics, Bard College, Annandale-On-Hudson, NY 12504, USA}
\email{cullinan@bard.edu}
\urladdr{\url{http://faculty.bard.edu/cullinan/}}

\keywords{torsion point, Tamagawa number, isogeny}

\begin{abstract}
Let $\ell$ be an odd prime, and suppose $E$ is an elliptic curve defined over the rational numbers $\Q$.  If $E$ has an $\ell$-torsion point, then there has been significant work done on characterizing the $\ell$-divisibility of the global Tamagawa number of $E$.  In this paper, we consider elliptic curves that are $\ell$-isogenous to elliptic curves with an $\ell$-torsion point and study the $\ell$-divisibility of their global Tamagawa numbers.
\end{abstract}

\maketitle

\section{Introduction}

\subsection{Setup and Motivation} Let $E$ be an elliptic curve defined over the rational numbers $\Q$.  Then $E$ admits finitely many rational isogenies $E \to E'$, where $E'$ is another elliptic curve over $\Q$.  The set of isomorphism classes of isogenous elliptic curves to $E$ form the vertices of a graph (the \emph{isogeny graph} of $E$), with edges corresponding to prime-degree isogenies.  Isogeny graphs of elliptic curves over $\Q$ were recently explicitly classified in \cite{barrios_ig}. \emph{Isogeny-torsion} graphs are isogeny graphs whose vertices are decorated with the torsion subgroup of the elliptic curve. In  \cite{MR4203041}, all possible isogeny-torsion graphs over $\Q$ were classified. It is an interesting number theoretic problem to study how the arithmetic properties of elliptic curves vary over members of the same graph.  

Here is the setup we are considering. Fix a positive integer $m$ and suppose $|E(\Q)_\tors| = m$. For each prime $\ell$ at which $E$ has good reduction, every $\Q$-isogenous elliptic curve $E'$ to $E$ has the property that $|E'(\F_\ell)|$ is divisible by $m$. This is due to: (1) the well-known injectivity of rational torsion into $E(\F_\ell)$ under reduction modulo $\ell$, (2) the fact that the number of $\F_\ell$-points is an isogeny invariant, and (3) that a $\Q$-isogeny descends to an $\F_\ell$-isogeny for all but finitely many $\ell$.  However, it is not necessarily the case that all of these $E'$ must have $|E'(\Q)_\tors| = m$.  Introducing a little more terminology as in \cite{ckv}, we say that $E/\Q$ \emph{locally has a subgroup of order $m$} if $|E(\F_\ell)| \equiv 0 \pmod{m}$ for all good primes $\ell$. 

\begin{exm}
The isogeny class \href{https://www.lmfdb.org/EllipticCurve/Q/880/h/}{880.h} \cite{lmfdb} consists of four elliptic curves:
\begin{center}
\begin{tabular}{|ll|}
\hline
Curve & Torsion Subgroup \\
\href{https://www.lmfdb.org/EllipticCurve/Q/880/h/1}{880.h1} & $\Z/2\Z$ \\
\href{https://www.lmfdb.org/EllipticCurve/Q/880/5/2}{880.h2} & $\Z/4\Z$ \\
\href{https://www.lmfdb.org/EllipticCurve/Q/880/h/3}{880.h3} & $\Z/2\Z \times \Z/2\Z$ \\
\href{https://www.lmfdb.org/EllipticCurve/Q/880/h/4}{880.h4} & $\Z/2\Z$\\
\hline
\end{tabular}
\end{center}
The curves 880.h1 and 880.h4 locally have subgroups of order 4, but not globally.
\end{exm}

We immediately specialize to the case $m=\ell$ is a fixed, odd prime number for the remainder of the paper.  By Mazur's torsion theorem \cite{mazur}, we are necessarily restricted to $\ell \in \lbrace 3,5,7 \rbrace$.  If $E(\Q)$ has a point $P$ of order $\ell$, then set $\widetilde{E} \ddef E/\langle P \rangle$.  In terms of the mod~$\ell$ representation,  the difference between the two curves is that if we choose a basis for $E[\ell]$ such that
\begin{align} 
\im  \overline{\rho}_{E,\ell} = \left\{\begin{pmatrix} 1  & * \\ 0 & * \end{pmatrix}\right \}, \label{1*0*}
\end{align}
then up to an ordering of a corresponding basis for $\widetilde{E} [\ell]$, we have
\begin{align} 
\im \overline{\rho}_{\widetilde{E} ,\ell} = \left\{\begin{pmatrix} *  & * \\ 0 & 1 \end{pmatrix}\right\}.\label{**01}
\end{align}
If, furthermore, $\im \overline{\rho}_{E,\ell}$ is a split Cartan subgroup of $\GL_2(\Fl)$, then (at least one of) the $\ell$-isogenous curves to $E$ will have an $\ell$-torsion point (by \cite{zywina}, this restricts $\ell \in \lbrace 3,5 \rbrace$). 

Let $\mathcal{L}_{\ell}$ be the set of isomorphism classes of elliptic curves over $\Q$ that locally have a subgroup of order $\ell$ and $\mathcal{G}_{\ell} \subseteq \mathcal{L}_{\ell}$ the subset of curves that have a global point of order $\ell$.  (When $\ell = 2$, $\mathcal{G}_{2}  = \mathcal{L}_{2}$, but for odd $\ell$, $\mathcal{G}_{\ell}$ is a proper subset of $\mathcal{L}_{\ell}$,  another reason why we take $\ell$ odd.)  This brings us to the main idea of the current paper.  We are interested in how the $\ell$-divisibility of the  global Tamagawa number $c_E$ of elliptic curves differs between the sets $\mathcal{G}_{\ell}$ and $\mathcal{L}_{\ell} \smallsetminus \mathcal{G}_{\ell}$.

There has been a good amount of work done already on studying the $\ell$-divisibility of Tamagawa numbers of elliptic curves with a non-trivial $\ell$-torsion point.  For example, in \cite{lorenzini} and \cite{br}, the authors studied, among other things, the conditions under which $c_E$ is divisible by $\ell$ for $E \in \mathcal{G}_{\ell}$.  Specifically, in \cite{lorenzini}, it was shown that if $\ell = 7$, then $c_E$ is divisible by 7 for all $E/\Q$ with a rational 7-torsion point, and if $\ell =5$, then $c_E$ is divisible by 5 unless $E$ is isomorphic to the modular curve $X_1(11)$.  When $\ell \in \lbrace 2,3 \rbrace$, there are infinitely many $E/\Q$ with a rational $\ell$-torsion point with $c_E$ coprime to $\ell$, and in \cite{br} the authors give a complete classification of such curves for which $c_E=1$.  Recently, in \cite{melistas}, the Melistas showed that for every number field $K/\Q$, there exists a constant $n_{K}$ such that for every $\ell \geq 7$ and every $E \in \mathcal{G}_{\ell}$, it is the case that $c_{E/K} \equiv 0 \pmod{\ell}$ with at most $n_{K}$ exceptions. We also note that the ratio of the {local} Tamagawa numbers of $\ell$-isogenous elliptic curves is a power of $\ell$ \cite[Lemma 6.2]{Dokchitsersquared}. When $\ell\ge 3$, the ratio is $\ell^{\pm1}$. However, for $\ell=2$, the ratio could be $4^{\pm1}$ as illustrated by the isogeny class \href{https://www.lmfdb.org/EllipticCurve/Q/14400/cr/}{14400.cr}.

In this paper, we study the $\ell$-divisibility of global Tamagawa numbers for elliptic curves in $\mathcal{L}_{\ell} \smallsetminus \mathcal{G}_{\ell}$.  Loosely speaking, we find that the presence of a local subgroup of order $\ell$ has a more nuanced effect on the $\ell$-divisibility of the global Tamagawa number than does having a global point of order $\ell$. We now describe our results in detail. 

\subsection{Main Results}  Our main results  characterize the elliptic curves in $\mathcal{L}_{\ell} \smallsetminus \mathcal{G}_{\ell}$ whose global Tamagawa number is divisible by $\ell$, for $\ell \in \lbrace 3,5,7\rbrace$. Most of our treatment of parametric models and notational conventions is taken from \cite{Barrios}, and we briefly review it here.

Let $C_\ell$ denote the cyclic group of order $\ell$.  Then the elliptic curves over $\Q$  with torsion subgroup isomorphic to $C_\ell$ can be integrally parameterized, as determined in \cite{Barrios} (we review the explict models below).  For ease of notation, once $\ell$ is fixed we set $T = C_\ell$.  If $\ell\in \lbrace 5,7 \rbrace$, then all elliptic curves with torsion subgroup $C_\ell$ are given by the specialization of a two-parameter integral family $E_T(a,b)$.  When $\ell = 3$, we need to distinguish between $E_{C_3}(a,b)$ which do not have $j$-invariant 0, and $E_{C_3^0}(a)$, which have $j$-invariant 0.  To further ease notation, unless we have explicit need of the parameters, we write $E_T$ for $E_T(a,b)$.  

We set $\widetilde{E}_T \ddef E_T/T$; thus $\widetilde{E}_T$ is rationally $\ell$-isogenous to $E_T$.  It is possible that $\widetilde{E}_T$ has a rational $\ell$-torsion point when $\ell \in \lbrace 3,5\rbrace$, but it will never be the case that $\widetilde{E}_T$ has a 7-torsion point when $\ell = 7$.  We also denote the discriminants of $E_T$ and $\widetilde{E}_T$ by $\Delta_T$ and $\widetilde{\Delta}_T$, respectively. Similarly, we let $c_T$ and $\widetilde{c}_T$ denote the global Tamagawa numbers of $E_T$ and $\widetilde{E}_T$, respectively.  If it is clear from context, we may omit the subscript ``$T$'' from the global Tamagawa numbers.  

Our first result is a complete characterization of the local Tamagawa numbers of $E_T$ and $\widetilde{E}_T$, for all primes $p$, in terms of the integral parameters $a$ and $b$ (reviewed in Section~\ref{background_section}); this is Theorem~\ref{thmontama} in the main body of the paper, which we reproduce here without the referencing tables in order to save space.

\begin{thm}
There are necessary and sufficient conditions on the parameters of  $E_{T}$ and $\widetilde{E}_{T}$ to determine the local Tamagawa numbers $c_{p}$ and $\widetilde{c}_{p}$ of $E_{T}$ and $\widetilde{E}_{T}$, respectively, at each prime $p$ of bad reduction. These conditions are listed in Table~\ref{ta:Tamagawanumbers}.
\end{thm}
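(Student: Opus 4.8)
The plan is to prove this by an explicit case analysis using Tate's algorithm applied to the parametric families $E_T(a,b)$ and their quotients $\widetilde{E}_T(a,b)$. Since the whole point of the parametrization from \cite{Barrios} is that every curve with torsion subgroup $C_\ell$ arises as a specialization, the starting data are the Weierstrass coefficients $a_i$ of $E_T$ (and the isogenous model $\widetilde{E}_T$) written as explicit polynomials in $a$ and $b$. First I would record, once and for all, the quantities that drive Tate's algorithm: the discriminant $\Delta_T$, the $c_4$ and $c_6$ invariants, and the $j$-invariant, all as polynomials in $a,b$. The local Tamagawa number $c_p$ and the Kodaira type at a bad prime $p$ are determined by the $p$-adic valuations of these invariants together with a bounded amount of residue-class information (e.g. whether certain auxiliary polynomials are squares or have roots mod $p$), so the theorem reduces to reading off, prime by prime, how $v_p(\Delta_T)$, $v_p(c_4)$, etc., depend on $v_p$ of the parameters and on congruence conditions on $a,b$.

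The key organizational step is to separate the primes of bad reduction into the generic bad primes (those dividing the discriminant for structural reasons built into the family) and the exceptional small primes $p \mid \ell$ or $p \in \{2,3\}$ where the Weierstrass model may fail to be minimal or where Tate's algorithm branches on extra residue conditions. For each $\ell \in \{3,5,7\}$ I would run Tate's algorithm symbolically: write $\Delta_T = \Delta_T(a,b)$ in factored form, identify the irreducible factors, and for a prime $p$ dividing a given factor to order $n$, determine the Kodaira type (typically $\mathrm{I}_n$ in the multiplicative/split-multiplicative cases coming from the torsion structure, with additive types only at the exceptional primes) and hence $c_p$. The translation invariance under $\SL_2$ and the fact that $\widetilde{E}_T$ is obtained from $E_T$ by an explicit $\ell$-isogeny means $\widetilde{\Delta}_T$ and the tilded invariants are again explicit polynomials in $a,b$, so the same machinery applies to $\widetilde{E}_T$; the relation $\widetilde{\Delta}_T = \Delta_T \cdot (\text{explicit factor})^{\pm}$ (reflecting the $\ell^{\pm 1}$ ratio of local Tamagawa numbers noted via \cite[Lemma 6.2]{Dokchitsersquared}) gives a consistency check linking $c_p$ and $\widetilde{c}_p$.

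The main obstacle I expect is the bookkeeping at the exceptional primes and the minimality question: near $p = \ell$ (and $p=2,3$) the given integral model need not be minimal, so before applying Tate's algorithm I would have to detect non-minimality (via $v_p(c_4) \geq 4$ and $v_p(\Delta) \geq 12$, etc.) and pass to a minimal model, which introduces several congruence subcases on $a$ and $b$ modulo small powers of $p$. Getting these congruence conditions to be simultaneously \emph{necessary and sufficient} — rather than merely sufficient — is the delicate part: for each Kodaira type one must check that the stated condition on $(a,b)$ exactly cuts out the specializations realizing that type, with no overlap and no gaps, which is precisely what fills Table~\ref{ta:Tamagawanumbers}. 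I would finish by verifying the completeness of the table against the LMFDB \cite{lmfdb} for a range of conductors, ensuring every $(a,b)$ falls into exactly one row and that the computed $c_p,\widetilde{c}_p$ match the database.
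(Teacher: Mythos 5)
Your overall strategy (symbolic Tate's algorithm over the parametric families, with extra care about minimality and residue conditions at small primes) has the right general shape, but it rests on a structural claim that is false for $\ell=3$, and that error would leave a substantial part of Table~\ref{ta:Tamagawanumbers} out of reach. You assert that additive reduction occurs ``only at the exceptional primes'' $p \mid \ell$ or $p \in \{2,3\}$. That is true for $T = C_5, C_7$ (away from $\ell$ the reduction is never additive), but for $T=C_3$ the curve $E_{C_3}(a,b)$ has additive reduction (Kodaira types $\mathrm{IV}$, $\mathrm{IV}^*$) at every prime $p \neq 3$ with $v_p(a) \not\equiv 0 \pmod{3}$, and for $T = C_3^0$ at every prime $p \neq 3$ dividing $a$. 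These are exactly the rows of the table carrying the dichotomy $p \equiv 1 \pmod{6}$ versus $p \equiv 2,5 \pmod{6}$, and they occur at arbitrarily large primes. Your organizational dichotomy (generic primes are multiplicative of type $\mathrm{I}_n$, additive types only at small primes) therefore misses them, and with them the cases where relating $c_p$ to $\widetilde{c}_p$ is genuinely nontrivial: at a type-$\mathrm{IV}/\mathrm{IV}^*$ prime, Tate's algorithm gives $c_p \in \{1,3\}$ according to whether a certain quadratic splits over $\F_p$, and converting that into the clean congruence conditions of the table requires either a quadratic-reciprocity argument or, as the paper does, the isogeny ratio formula of Dokchitser--Dokchitser.

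This points to a second divergence. You relegate \cite[Lemma 6.2]{Dokchitsersquared} to a ``consistency check,'' but in the paper's proof the Dokchitser--Dokchitser results are a primary engine, not a check: their Theorem 6.1 gives $c_p/\widetilde{c}_p$ directly in terms of $\left(\frac{-3}{p}\right)$ in the additive potentially good cases, and their Theorems 3.1 and 5.4 pin down the Kodaira type of $\widetilde{E}_T$ from that of $E_T$ (e.g.\ $\mathrm{II} \leftrightarrow \mathrm{II}^*$, $\mathrm{III} \leftrightarrow \mathrm{III}^*$) at $p=\ell$, so that explicit Tate computations on $\widetilde{E}_T$ are needed only in a handful of subcases at $p=3$. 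Moreover, the paper does not re-derive the $E_T$ side at all: the bad-reduction conditions and the values of $c_p$ for $E_T$ are quoted from \cite{br,br2}, and the genuinely new content is the computation of $\widetilde{c}_p$. At $p=3$ for $T=C_3$ this forces the decomposition $a=c^3d^2e$ and passage to explicit minimal models; note that the table's conditions at $p=3$ are themselves stated in terms of $c,d,e$, so these auxiliary parameters are part of the statement, not optional bookkeeping. Finally, your concluding LMFDB comparison is a sanity test, not a proof of necessity and sufficiency; the exactness of the conditions must come from the case analysis itself, as you correctly acknowledge earlier in the proposal.
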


The proof of Theorem \ref{thmontama} involves an explicit case-by-case analysis of Tate's algorithm and the type and signature of the local singular point.  In the case when $T=C_3$, we need to deduce integral \emph{minimal} models for $E_T$ and $\widetilde{E}_T$, which involves working with additional integral parameters $c$, $d$, and $e$.

Next, we use the information gained in Theorem \ref{thmontama} to try to quantify how often $\widetilde{E}_T$ has global Tamagawa number divisible by $\ell$.  In terms of Galois representations, the difference between our work and \cite{lorenzini} is that ours is an analysis of elliptic curves with mod $\ell$ representation (\ref{**01}), while Lorenzini considered the case (\ref{1*0*}).  We now give a summary of these results, beginning with the case $\ell=3$.  In the main body of the paper, the following theorem is split into  Propositions \ref{PropTama3j0} and \ref{PropTama3}.

\begin{thm}\label{thmcis1lis3intro}
There are infinitely many elliptic curves $E/\Q$ with a 3-torsion point $P$ such that the global Tamagawa number $\widetilde{c}$ of $\widetilde{E}=E/\left\langle P\right\rangle $ is divisible by $3$.  There are also infinitely many elliptic curves $E/\Q$ with a 3-torsion point $P$ such that the global Tamagawa number $\widetilde{c}$ of $\widetilde{E}=E/\left\langle P\right\rangle $ is 1.
\end{thm}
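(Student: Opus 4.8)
The plan is to translate everything into the product formula $\widetilde c_T=\prod_p \widetilde c_p$ over the primes of bad reduction and then exploit the explicit integral families. Since each $\widetilde c_p$ is a positive integer, $3\mid \widetilde c_T$ holds if and only if some local factor satisfies $\widetilde c_p\equiv 0\pmod 3$, whereas $\widetilde c_T=1$ holds if and only if $\widetilde c_p=1$ at \emph{every} bad prime. Theorem~\ref{thmontama} (Table~\ref{ta:Tamagawanumbers}) converts each of these local conditions into explicit conditions on the parameters $a,b$ of $E_T=E_{C_3}(a,b)$ (or on $a$ for $E_{C_3^0}(a)$), so both assertions reduce to producing infinite parameter subfamilies meeting the corresponding cells of the table, with distinctness of isomorphism classes guaranteed by checking that $j(\widetilde E_T)$ is non-constant along the subfamily. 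The structural engine is that the $3$-isogeny forces a factor common to $\Delta_T$ and $\widetilde\Delta_T$ to occur with its multiplicity tripled in $\widetilde\Delta_T$: at a prime $p$ of multiplicative reduction dividing this factor one has $v_p(\widetilde\Delta_T)=3\,v_p(\Delta_T)$, so $\widetilde E_T$ has Kodaira type $I_{3n}$ and, in the split case, $\widetilde c_p=3n$.

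For the first assertion I would specialize to a one-parameter subfamily on which some prime $p$ of split multiplicative reduction divides the connecting factor; by the previous paragraph $\widetilde c_p=3n\equiv 0\pmod 3$, giving $3\mid\widetilde c_T$. Concretely one fixes one parameter and lets the other range over a congruence class chosen, via Table~\ref{ta:Tamagawanumbers}, so that the reduction at $p$ lands in a split multiplicative cell (alternatively, one can target an additive cell of type $IV$ or $IV^{*}$, where $\widetilde c_p=3$). The single local condition is easy to impose, and since $j(\widetilde E_T)$ is a non-constant rational function of the parameters it takes infinitely many values along the subfamily, producing infinitely many non-isomorphic such $E$. This is the routine direction.

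The second assertion is the hard direction, because $\widetilde c_T=1$ demands $\widetilde c_p=1$ \emph{simultaneously} at all bad primes. The admissible local types are exactly those with trivial contribution to the component group: good reduction, multiplicative type $I_1$ (or non-split $I_m$ with $m$ odd), and additive types $II$ and $II^{*}$. My plan is to choose a subfamily in which (i) the behavior at the finitely many ``small'' bad primes---in particular $2$ and $3$, where the integral model can force additive reduction---is pinned down by a fixed congruence on $(a,b)$ placing each of them in a $\widetilde c_p=1$ cell of Table~\ref{ta:Tamagawanumbers}, and (ii) at the remaining primes the connecting factor contributes only type $I_1$. For (ii) I would arrange, after removing fixed common factors, that the relevant part of $\widetilde\Delta_T$ is a single prime for infinitely many values of the free parameter, e.g.\ by running it through an arithmetic progression and invoking Dirichlet's theorem, so that $\widetilde E_T$ is multiplicative of type $I_1$ there and $\widetilde c_p=1$ automatically. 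The split of Theorem~\ref{thmontama} into Propositions~\ref{PropTama3j0} and~\ref{PropTama3} suggests handling the $j$-invariant-$0$ family $E_{C_3^0}(a)$ and the generic family $E_{C_3}(a,b)$ by such tailored subfamilies.

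The main obstacle is exactly step (ii) together with the split/non-split dichotomy: guaranteeing type $I_1$ (rather than $I_n$ with $n\ge 2$) at the generic primes is a squarefreeness requirement on part of the discriminant, and squarefree values of polynomials are not available unconditionally. I would circumvent this by \emph{not} insisting on full squarefreeness of $\widetilde\Delta_T$ but instead choosing the free parameter so that the only factor that can fail to be squarefree is linear in that parameter, whence it can be forced to be prime (type $I_1$) along a progression, while every other factor is a fixed unit or lands in a $\widetilde c_p=1$ cell; where a surviving multiplicative prime has $\widetilde c_p$ depending on whether the reduction is split or non-split, I would fix the relevant quadratic character by an additional congruence via quadratic reciprocity. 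Verifying the small-prime cells and the non-constancy of $j(\widetilde E_T)$ then completes both halves.
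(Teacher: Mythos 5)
Your first assertion is handled correctly, and in essentially the paper's way (the paper specializes $\widetilde{E}_{C_3}(2,p^3)$, i.e.\ forces $v_p(b)=3$ so that $\widetilde{c}_p=3$; your split-multiplicative or type $\mathrm{IV}/\mathrm{IV}^*$ cells work equally well). The gap is in the $\widetilde{c}=1$ direction, and it is twofold. First, your mechanism ``make the connecting factor a prime $q$ via Dirichlet, so $\widetilde{E}_T$ has type $\mathrm{I}_1$ at $q$ and $\widetilde{c}_q=1$ automatically'' contradicts your own (correct) earlier observation: at a prime $q\mid a-27b$ one has $v_q(\widetilde{\Delta}_T)=3\,v_q(\Delta_T)$, so if $a-27b=q$ is prime then $\widetilde{E}_T$ has type $\mathrm{I}_3$ there, never $\mathrm{I}_1$, and by Table~\ref{ta:Tamagawanumbers} $\widetilde{c}_q=3$ whenever $q\equiv 1\pmod 6$. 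Nothing is automatic; you must force non-splitness, and since $a-27b\equiv a\pmod 3$, prime values of $a-27b$ are non-split only if $a\equiv 2\pmod 3$ and the prime value is taken with the correct sign. This is repairable by congruences, but it is not the ``routine'' step you describe.

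Second, and fatally for a Dirichlet-only argument: with $a$ fixed there are \emph{two} varying factors, $b$ and $a-27b$, and both need control, since each prime $p\mid b$ contributes $\widetilde{c}_p=v_p(b)$, forcing $b$ squarefree. Once you spend Dirichlet to make $a-27b=q$ prime, $b=(a-q)/27$ runs over shifted primes, and its squarefreeness is a Mirsky-type sieve statement, not a consequence of Dirichlet; making both linear forms prime simultaneously is a Dickson-type open problem; and the escape $b=\pm 1$ fails because then $a$ must essentially be a cube $s^3$, and every prime factor of $s^2\pm 3s+9\mid a\mp 27$ is split (this is exactly Proposition~\ref{split3}/Corollary following it, which is why that case has $3\mid\widetilde{c}$ with a single exception). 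Moreover your premise that ``squarefree values of polynomials are not available unconditionally'' is false in degree $\le 3$: Erd\H{o}s's theorem \cite{Erdos1953} gives them for cubics, and this is precisely the paper's input. The paper's actual construction sidesteps the split/non-split issue entirely by collapsing the dangerous factor to a constant: along $a=27(3k+1)^3$, $b=(3k+1)^3-9$ one has $a-27b=3^5$ identically, $a$ is a cube (killing additive primes away from $3$), Table~\ref{ta:Tamagawanumbers} gives $\widetilde{c}_3=1$ in the $v_3(a-27b)=5$ cell, and Erd\H{o}s supplies infinitely many $k$ with $b$ squarefree. Alternatively, note that the theorem as stated already follows from the $j$-invariant-$0$ family alone (Proposition~\ref{PropTama3j0}): there all bad reduction is additive, so no multiplicative/squarefreeness issues arise and congruence conditions on the prime divisors of $a$ suffice; you gesture at this family, but your $\mathrm{I}_1$-via-Dirichlet mechanism cannot be what handles it, since $j=0$ curves have no multiplicative reduction at all.
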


In general, if an elliptic curve $E$ has a $\ell$-torsion point $P$, then the image of its mod $\ell$ representation lies in a Borel subgroup of $\GL_2(\F_\ell)$, as in (\ref{1*0*}).  If $\widetilde{E} = E/\langle P \rangle$ also has a 3-torsion point, then it is necessarily the case that the image of the mod $\ell$ image of $\widetilde{E}$ lies in a split Cartan subgroup.  In this special case, we have the following result, which appears later as Proposition \ref{split3}.

\begin{prop}\label{3cartanintro}
Let $E/\mathbb{Q}$ be an elliptic curve with a $3$-torsion point $P$ such that $\widetilde{E}
=E/\left\langle P\right\rangle $ also has a $3$-torsion point. Then, with the
exception of when $E$ is $\mathbb{Q}$-isomorphic to the elliptic curve \href{https://www.lmfdb.org/EllipticCurve/Q/27/a/2}{27.a2}, it is the case that the
global Tamagawa number of $\widetilde{E}$ is divisible by~$3$.
\end{prop}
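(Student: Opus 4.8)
The plan is to reduce the two hypotheses to a single explicit parametric condition and then extract the divisibility from the local Tamagawa data of Theorem~\ref{thmontama}. First I would reinterpret the hypotheses representation-theoretically. Since $E$ has a rational $3$-torsion point $P$, a suitable basis puts $\im\overline{\rho}_{E,3}$ in the Borel form (\ref{1*0*}), and by (\ref{**01}) the quotient $\widetilde{E}=E/\langle P\rangle$ has a rational $3$-torsion point exactly when the upper-right entry in (\ref{**01}) is forced to vanish, i.e.\ when $\im\overline{\rho}_{E,3}$ lies in a split Cartan subgroup of $\GL_2(\F_3)$. As $\det\overline{\rho}_{E,3}$ is the mod~$3$ cyclotomic character, the complementary diagonal character must be cyclotomic, so the hypothesis is equivalent to the $\GQ$-module isomorphism $E[3]\cong (\Z/3)\oplus\mu_3$. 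In particular both $E$ and $\widetilde{E}$ lie in $\mathcal{G}_3$.

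Next I would cut this locus out of the integral models. Beginning with $E_T=E_{C_3}(a,b)$ (and separately $E_{C_3^0}(a)$ in the $j=0$ case) and computing $\widetilde{E}_T=E_T/T$ by Vélu's formulas, the demand that $\widetilde{E}_T$ carry a rational point of order $3$---equivalently that its second rational cyclic $3$-subgroup be generated by a rational point rather than be isomorphic to $\mu_3$---imposes a single homogeneous relation $f(a,b)=0$. Solving it yields a one-parameter integral subfamily, together with the $j=0$ curves $E_{C_3^0}(a)$ as a distinguished degenerate locus. With $(a,b)$ so constrained, the discriminant $\widetilde{\Delta}_T$ and the invariants $c_4,c_6$ of $\widetilde{E}_T$ become explicit polynomials in the remaining parameter, which I can then match against the hypotheses of Table~\ref{ta:Tamagawanumbers}.

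The core of the argument is to exhibit, for every member of the subfamily other than the single exception, a prime $p$ contributing a factor of $3$ to $\widetilde{c}=\prod_p\widetilde{c}_p$. I expect the relation $f(a,b)=0$ to force $\widetilde{\Delta}_T$ to carry a factor whose $p$-adic valuation is automatically divisible by $3$ at some prime $p$ of split multiplicative reduction; for such a prime the $I_n$ rows of Table~\ref{ta:Tamagawanumbers} give $\widetilde{c}_p=\ord_p\widetilde{\Delta}_T\equiv 0\pmod 3$, whence $3\mid\widetilde{c}$. When no split multiplicative prime is available, the only remaining possibility is that $\widetilde{E}_T$ has good reduction away from $3$ and additive reduction at $3$, forcing $j$-invariant $0$ and conductor $27$; these finitely many curves I would test directly against the additive rows of Table~\ref{ta:Tamagawanumbers}. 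For all but one of them the additive Kodaira type at $3$ still contributes a factor of $3$, and the unique class where it does not---so that $\widetilde{c}=1$---is the one with $E$ being $\Q$-isomorphic to 27.a2.

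The main obstacle is controlling this degenerate locus. Away from it the divisibility is essentially structural, being read off from the shape of $\widetilde{\Delta}_T$ through the multiplicative rows of Table~\ref{ta:Tamagawanumbers}; but isolating exactly one exceptional $\Q$-isomorphism class is delicate, because the $j=0$, conductor-$27$ curves have additive (potentially good) reduction at $3$, where Tate's algorithm behaves differently and the small Tamagawa numbers must be computed case by case rather than inferred from the generic $I_n$ analysis. Verifying that 27.a2 is the only class that slips through---and not, say, another curve in its isogeny class---is the step I would treat most carefully.
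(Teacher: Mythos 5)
There are two genuine gaps in your proposal, and they sit exactly where the paper's proof does its real work. First, your parametrization step is based on a misconception: the locus of $(a,b)$ for which $\widetilde{E}_{C_3}(a,b)$ has a rational $3$-torsion point is \emph{not} cut out by a homogeneous relation $f(a,b)=0$, and there is no one-parameter subfamily. The $3$-division polynomial of $\widetilde{E}_{C_3}(a,b)$ factors as $(3x+a^2)\bigl(x^3-9a^3bx-a^4b(a-27b)\bigr)$, and the cubic factor has a rational root if and only if $a$ and $b$ are \emph{both perfect cubes} (this is Proposition~\ref{3torsprop} in the paper, proved via Cardano's formulas). So the correct locus is the thin two-parameter set $(a,b)=(s^3,t^3)$, an arithmetic condition on the parameters, not an algebraic curve; your subsequent plan of expressing $\widetilde{\Delta}_T$, $c_4$, $c_6$ "in the remaining parameter" does not get off the ground as stated.

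Second, and more seriously, your dichotomy "either a split multiplicative prime contributes a factor of $3$, or else $\widetilde{E}$ has good reduction away from $3$, hence $j=0$ and conductor $27$" is false, and it conceals the step that actually isolates the exception. A curve in this family can have non-split multiplicative reduction at primes $p\equiv 2,5\pmod 6$ dividing $a-27b$ (each contributing only $1$ or $2$ to $\widetilde{c}$) while having no split multiplicative prime at all: the curve $54.\mathrm{a}3$, arising from $(a,b)=(27,-1)$, has conductor $54$ and nonzero $j$-invariant, yet no split multiplicative prime. Worse, you have no argument that the set of curves lacking a split multiplicative prime is finite. The paper's mechanism is a norm-form fact: after reducing to $b=\pm 1$ (if $|b|>1$, the cube $b$ already gives $\widetilde{c}_p=v_p(b)\equiv 0\pmod 3$ at any $p\mid b$) and writing $a=c^3$, one factors $a-27b=(c-3b)(c^2+3cb+9b^2)$, and every prime $p\neq 3$ dividing $c^2+3cb+9b^2$ satisfies $p\equiv 1\pmod 6$ (it is a norm form for $\Q(\sqrt{-3})$), hence is split multiplicative with $\widetilde{c}_p=3v_p$ by Table~\ref{ta:Tamagawanumbers}. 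The only escape is therefore $c^2+3cb+9b^2=\pm 3^k$, a norm equation whose integer solutions form an explicit finite list; checking them produces $54.\mathrm{a}3$ (Tamagawa number $3$) and the unique exception $27.\mathrm{a}2$. Without identifying this norm-form finiteness argument (or the paper's equivalent device, the explicit isomorphism $\widetilde{E}_{C_3}(c^3,b^3)\cong E_{C_3}(A,B)$ re-expressing $\widetilde{E}$ inside the same family), your proof can neither bound the exceptional set nor certify that $27.\mathrm{a}2$ is the only curve that slips through.
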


Lorenzini \cite{lorenzini} showed that there are infinitely many elliptic curves with a $3$-torsion point with $c=1$. Such curves were explicitly classified in \cite{br}. These results, together with Theorem~\ref{thmcis1lis3intro}, lead us
 us to ask the question of whether there are isogeny classes of elliptic curves containing an elliptic curve $E$ with a $3$-torsion point $P$ such that the global Tamagawa number of both $E$ and $\widetilde{E}=E/\left\langle P\right\rangle $ is $1$. In this case, we have that there is exactly one isogeny class satisfying this criterion. Namely, the isogeny class with LMFDB label \href{https://www.lmfdb.org/EllipticCurve/Q/27/a}{27.a}. This isogeny class contains the elliptic curves \href{https://www.lmfdb.org/EllipticCurve/Q/27/a/2}{27.a2} and \href{https://www.lmfdb.org/EllipticCurve/Q/27/a/4}{27.a4}, which satisfy the above assumptions for $E$ and $\widetilde{E}$, respectively. Both elliptic curves have global Tamagawa number $1$. However, if we instead consider the product of the global Tamagawa numbers of isogenous elliptic curves, then we have the following result, which is Theorem \ref{LoreExte} in the article.

\begin{thm}\label{LoreExteintro}
Let $E$ be an elliptic curve with a $3$-torsion point $P$. Then, the product of the global Tamagawa numbers of the elliptic curves in the isogeny class of $E$ is divisible by~$3$.
\end{thm}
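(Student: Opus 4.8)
The plan is to exploit the primality of $3$: since $3$ is prime, the product of global Tamagawa numbers over the isogeny class is divisible by $3$ if and only if \emph{some} curve in the class has global Tamagawa number divisible by $3$. It therefore suffices to exhibit one such curve, and the natural candidates are $E$ and $\widetilde{E} = E/\langle P\rangle$, both of which lie in the class. I would compare $c_E$ and $\widetilde{c}$ prime by prime using the fact \cite[Lemma 6.2]{Dokchitsersquared} that at each prime $p$ the local ratio $\widetilde{c}_p/c_p$ is a power of $3$, in fact $3^{0}$ or $3^{\pm 1}$. If this ratio is nontrivial at even one prime, then the larger of $c_p,\widetilde{c}_p$ is divisible by $3$ and we are finished; so the crux is the case in which $c_p=\widetilde{c}_p$ at every prime.

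First I would dispose of the multiplicative primes. Suppose $E$ has split multiplicative reduction of type $I_n$ at $p$. The rational point $P$ of order $3$ reduces into the component group $\Z/n\Z$ of the special fiber, and there are two possibilities. If $P$ meets a nontrivial component, its image in $\Z/n\Z$ has order $3$, forcing $3\mid n=c_p$ and hence $3\mid c_E$. Otherwise $P$ reduces into the identity component, the quotient isogeny acts as $q\mapsto q^{3}$ on Tate parameters, and $\widetilde{E}$ has type $I_{3n}$, so $3\mid 3n=\widetilde{c}_p$ and hence $3\mid\widetilde{c}$. Either way the theorem holds. At a non-split multiplicative prime the local Tamagawa number depends only on the parity of $n$, which the $3$-isogeny preserves; such primes contribute no factor of $3$ and are consistent with $c_p=\widetilde{c}_p$. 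Consequently I may assume $E$ has no split multiplicative primes.

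Everything then rests on the additive primes, and here I would feed the explicit values of $c_p$ and $\widetilde{c}_p$ from Theorem~\ref{thmontama} into the same dichotomy, assisted by the classification of $3\mid\widetilde{c}$ in Theorem~\ref{thmcis1lis3intro}. At an additive prime $p\neq 3$ the identity component is a form of $\mathbb{G}_a$, whose $\F_p$-points contain no $3$-torsion, so $P$ either meets a nontrivial component---producing type $\mathrm{IV}$ or $\mathrm{IV}^{\ast}$ with $c_p=3$, whence $3\mid c_E$---or reduces trivially. If in addition $\widetilde{E}$ has a rational $3$-torsion point, then Proposition~\ref{3cartanintro} already gives $3\mid\widetilde{c}$ outside a single exceptional curve. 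The genuinely delicate prime is $p=3$, where $\mathbb{G}_a(\F_3)=\F_3$ does carry $3$-torsion and $P$ can hide in the additive part without forcing any component of order $3$; running the tables of Theorem~\ref{thmontama} through this analysis, I expect to conclude that $3\mid c_E$ or $3\mid\widetilde{c}$ for every curve except those whose parameters yield the isogeny class \href{https://www.lmfdb.org/EllipticCurve/Q/27/a}{27.a}.

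For that single exceptional class, where $E\cong$~27.a2 and $\widetilde{E}\cong$~27.a4 both have global Tamagawa number $1$, I would finish by observing that the class contains two further curves, 27.a1 and 27.a3, at least one of which has reduction at $3$ with local Tamagawa number $3$; their presence makes the product divisible by $3$. I expect the main obstacle to be precisely this additive analysis at $p=3$: the wild ramification, the residual split/non-split bookkeeping for the remaining multiplicative configurations, and the separate treatment required for the $j=0$ subfamily $E_{C_3^0}$ are what make the reduction down to the lone class 27.a laborious, and this is exactly the case-by-case work that the machinery behind Theorem~\ref{thmontama} is built to carry out.
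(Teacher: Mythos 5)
Your reductions are correct as far as they go: it does suffice to exhibit one curve in the class whose global Tamagawa number is divisible by $3$; the Tate-curve dichotomy at split multiplicative primes is right (either $3\mid n=c_p$, or $\langle P\rangle=\mu_3$ and $\widetilde{E}$ has type $\mathrm{I}_{3n}$); non-split primes are genuinely neutral; and at an additive prime $p\neq 3$ a rational $3$-torsion point forces type $\mathrm{IV}$ or $\mathrm{IV}^{\ast}$ with $c_p=3$ (your unresolved ``reduces trivially'' branch is in fact vacuous, since the kernel of reduction is the formal group, which has no $3$-torsion). But all of this only re-derives, prime by prime, information that Table~\ref{ta:Tamagawanumbers} already encodes. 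The actual content of the theorem lies in the residual case that you leave to ``running the tables of Theorem~\ref{thmontama}'' at $p=3$ together with ``I expect to conclude,'' and that case cannot be closed by a local computation at $3$.

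The missing idea is the criterion for when $\widetilde{E}$ has a rational $3$-torsion point, which is Proposition~\ref{3torsprop} of the paper: writing $E\cong E_{C_3}(a,b)$, this happens exactly when $a$ and $b$ are both cubes. That criterion is the bridge that makes your conditional appeal to Proposition~\ref{3cartanintro} operational. With it, your residual case (no split multiplicative primes, hence $b=\pm1$; no additive primes $p\neq 3$, hence $v_p(a)\equiv 0\pmod 3$ for all $p\neq 3$) splits cleanly: either $v_3(a)\not\equiv 0\pmod 3$, and Table~\ref{ta:Tamagawanumbers} does give $c_3=3$; or $a$ is a perfect cube, $\widetilde{E}$ has a $3$-torsion point by Proposition~\ref{3torsprop}, and Proposition~\ref{3cartanintro} gives $3\mid\widetilde{c}$ with the single exception $E\cong 27.\mathrm{a}2$, whose class $27.\mathrm{a}$ is rescued by $27.\mathrm{a}3$ (global Tamagawa number $3$). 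Without the cube criterion your dichotomy has a hole; concretely, take $E=E_{C_3}(c^3,1)$ with $3\nmid c$. This curve has \emph{good} reduction at $3$, so there is no table row at $p=3$ to consult, and your local analysis produces a factor of $3$ only if some prime dividing $c^3-27=(c-3)(c^2+3c+9)$ is split multiplicative. That this always happens (every prime $p\neq 3$ dividing $c^2+3c+9$ satisfies $p\equiv 1\pmod 6$) is a genuine arithmetic fact whose proof in the paper is the re-parametrization $\widetilde{E}_{C_3}(c^3,b^3)\cong E_{C_3}(A,B)$ and norm-form equation $c^2+3cb+9b^2=\pm 3^k$ inside Proposition~\ref{split3} --- it is not ``the machinery behind Theorem~\ref{thmontama}.'' (The $j=0$ family $E_{C_3^0}$ and the verification that $27.\mathrm{a}3$ indeed has Tamagawa number $3$ also remain to be written out, though both are quick.) This is why the paper's proof of Theorem~\ref{LoreExte} runs parametrically from the start: $3\nmid c_E$ forces $b=\pm 1$ and $a$ a cube directly from the table, and then Propositions~\ref{3torsprop} and~\ref{split3} finish.
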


When $\ell \in \lbrace 5,7 \rbrace$, the local behavior is simpler to determine because away from $\ell$ the reduction is not additive \cite{MR0457444}, but we are unable to prove an infinitude result unconditionally.  We will give a briefer, itemized summary of these results since they are similar in spirit to the case $\ell=3$. 

\begin{itemize}
\item Roughly, if the parameters $a$ or $b$ has a factor that is an $\ell$-th power, then the curve $\widetilde{E}_T$ will have $\widetilde{c}$ divisible by $\ell$. When $\ell=5$, if $ab$ is not a $5$-th power, then $\widetilde{E}_T$ will only have a local $\ell$-subgroup, not a global.  We prove both of these results in Section~\ref{pthpowersection}.  
\item It is more difficult to prove that there are infinitely many $\widetilde{E}_T$ with $\widetilde{c}$ coprime to $\ell$.  While the statement is most likely true for both $\ell=5$ and $\ell=7$, we are only able to prove these results conditional on certain integral Diophantine equations having infinitely many points.  We make all of this precise in Section \ref{conditional_proofs}. 

\item Finally, in the case $\ell=5$, if $\widetilde{E}_T$ has a point of order $5$, then the image of the mod~$5$ representation of $\widetilde{E}_T$ is necessarily a split Cartan subgroup and we know by \cite{lorenzini} that except for the situation where one of $E$ or $\widetilde{E}$ is the curve \href{https://www.lmfdb.org/EllipticCurve/Q/11/a/3}{11.a3}, both will have global Tamagawa number divisible by 5.  In that case, we explore the 5-divisibility of the global Tamagawa numbers of $E$ and $\widetilde{E}$ in Corollary \ref{5cartan}.
\end{itemize}

\subsection{Future Directions} \label{future}

Given that we have explicit conditions for when the global Tamagawa number of $\widetilde{E}$ is divisible by $\ell$, one can try to go further and count the elliptic curves $E$ up to height $X$ such that the associated $\widetilde{c}$ is divisible by $\ell$.  This was the approach taken in \cite{hs}, and refined in \cite{ckv}, for counting elliptic curves with a given torsion structure over $\Q$.  But we can generalize even further. 

The groups (\ref{1*0*}) and (\ref{**01}) are just two possible mod $\ell$ representations attached to elliptic curves over $\Q$.   More generally, given an elliptic curve $E$,  we are interested in how the $\ell$-divisibility of its global Tamagawa number depends on the image $G_\ell$ of the $\ell$-adic representation.  Specifically, let $\mathcal{E}$ denote the set of isomorphism classes of elliptic curves over $\Q$ ordered by height; write $\mathcal{E}_{X}$ for the subset of $\mathcal{E}$ consisting of elliptic curves with height at most $X$.   

Fix a prime number $\ell$, a positive integer $n$, and a subgroup $G_\ell \subseteq \GL_2(\Zl)$.  Denote by 
$N(G_\ell,n,X)$ the number of $E \in \mathcal{E}_X$ such that $\im \rho_{E,\ell}$ is conjugate to  $G_\ell$ and $c_E \equiv 0 \pmod{\ell^n}$.  It would be interesting in a future work to determine asymptotics for $N(G_\ell,n,X)$ for various subgroups of $\GL_2(\Zl)$; our work in this paper, together with the work of \cite{br} and \cite{lorenzini} can be viewed as a start to this classification.   In this particular direction, we would need asymptotic estimates on the number of curves in $\mathcal{L}_{\ell} \smallsetminus \mathcal{G}_{\ell}$ with $c_E \equiv 0 \pmod{\ell}$, which we are currently unable to prove.

For a sample calculation, consider the universal family of elliptic curves with a $5$-torsion point, given explicitly by
\begin{align} \label{5tors}
E_t: \qquad y^2 + (1-t)xy -ty = x^3 - tx^2
\end{align}
for nonzero $t \in \Q$.  Observe that by writing $t = \frac{b}{a}$ in lowest terms and rescaling, $E_t$ matches the formula for $E_{C_5}(a,b)$ derived below.

For all $t \in \Q^{\times}$, $E_t$ has a rational point of order 5, and any elliptic curve over $\Q$ with a rational point of order 5 is a specialization of $E_t$.   The elliptic curve $E_1/\Q$ is a model for the modular curve $X_1(11)$ and its global Tamagawa number $c_{E_1} = 1$.  By \cite[Proposition~2.7]{lorenzini}, $E_1$ is the unique specialization of $E_t$ with $c_{E_t}$ coprime to 5. Now consider the elliptic curve $E'_t \ddef E_t/\langle (0,0) \rangle$ that is 5-isogenous to $E_t$ and, unless $t \in (\Q^{\times})^5$, $E'_t$ does not have a 5-torsion point.  

We perform a quick naive experiment: for integral $t \in [1,X]$, we count the number $N(X)$ of specializations $E'_t$ whose global Tamagawa number is divisible by 5.  
\begin{center}
\begin{tabular}{l|r|r|r|r|r|r}
$X$ & $10$ & $100$ & $1000$& $10000$ & $100000$ & $1000000$ \\
\hline
$N(X)$ & 4 & 59 & 705 & 7393 & 76807 & 787930
\end{tabular}
\end{center}
The upshot is that even though we know there are infinitely many specializations of $E'_t$ with Tamagawa number divisible by 5, it is not easily inferrable from these initial data what the explicit asymptotics might be as $X \to \infty$. See Section \ref{computations} for a more precise formulation of the statistics and computational data. In particular, for $\ell=5$ (resp. $7$), we consider the first $156$ (resp. $176$) million elliptic curves that are $\ell$-isogenous to an elliptic curve with an $\ell$-torsion point.

\subsection{Databases and Computation} There is a large computational component to this work, as seen throughout the paper, with emphasis on the final section.  All calculations were performed using Magma \cite{magma} and Sage \cite{sagemath}.  In addition, we cite many examples from the LMFDB \cite{lmfdb}.  For those examples, we provide links directly to the online database.

\section{Background and Setup} \label{background_section}

In this section we give a brief overview of the Tamagawa number of an elliptic curve.  We do not intend to be encyclopedic and refer the reader to \cite{silverman2} for detailed background.  Let $E$ be an elliptic curve defined over a global field $K$.  Let $v$ be a non-archimedean place of $K$ and denote by $K_v$ the completion of $K$ at $v$.  Let $k_v$ be the residue field at $v$ and let $E_0(K_v)$ be the points of non-singular reduction of $E(K_v)$.  Then $E_0(K_v)$ is a finite subgroup of $E(K_v)$ and the index 
\[
c_v(E/K) = [E(K_v):E_0(K_v)]
\]
is called the local Tamagawa number at $v$. The product $c(E/K) = \prod_v c_v(E/K)$ is the global Tamagawa number of $E/K$.  

We now take $K =\Q$ for the remainder of the paper.  For ease of notation, we write $c_{E,p}$ for the local Tamagawa number of $E$ at $p$ and $c_E$ for the global Tamagawa number.  Tate's algorithm is a multi-step, terminating process that determines $c_{E,p}$ and is laid out with 11 main steps in \cite[Ch.~IV.9]{silverman2} (in this paper, when we refer to Step $n$ of Tate's algorithm, we are referring to Silverman's exposition).

If $E/\Q$ is an elliptic curve with Weierstrass equation
\begin{align} \label{long_w}
y^2 + a_1xy + a_3y = x^3 + a_2x^2 + a_4x+a_6,
\end{align}
then the following invariants are well-known:
\begin{center}
\begin{tabular}{ll}
$b_2 = a_1^2 + 4a_2$ & $c_4 = b_2^2 - 24b_4$\\ 
$b_4 = 2a_4 + a_1a_3$ & $c_6 = 36b_2b_4 - 216b_6-b_2^3$ \\
$b_6 = a_3^2+4a_6$ &$\Delta =  9b_2b_4b_6 - b_2^2b_8 - 8b_4^3 - 27b_6^2$ \\
$b_8 = a_1^2a_6 + 4a_2a_6 - a_1a_3a_4 + a_2a_3^2 - a_4^2$& $j = c_4^3/\Delta$ 
\end{tabular}
\end{center}
Two elliptic curve $E$ and $E^{\prime}$ are \textit{$\Q$-isomorphic} if there exists an isomorphism $\psi:E\rightarrow E^{\prime}$ defined by $\psi(x,y)=\left(ux^{2}+r,u^{3}y+u^{2}sx+w\right)  $ for $u,r,s,w\in \Q$, with $u$ non-zero. For such an isomorphism, we write $\psi=\left[  u,r,s,w\right]  $. 

To begin calculating $c_{E,p}$, recall that $E$ has bad reduction when $p \mid \Delta$ (this is Step $1$), has multiplicative reduction at $p$ if $v_p(\Delta) >0$ and $v_p(c_4) = 0$ (this is Step $2$), and additive reduction if $v_p(\Delta) >0$ and $v_p(c_4) >0$ (these are steps $3$-$10$). If the algorithm reaches step $11$, then the model for $E$ is not minimal at $p$, and the algorithm must be run again after an admissible change of variables. A by-product of Tate's Algorithm is the Kodaira-N\'{e}ron type at a prime $p$ of an elliptic curve $E$, which we denote by $\operatorname{typ}_p(E)$. The \textit{$p$-adic signature} of an elliptic curve $E$, denoted $\operatorname{sig}_p(E)$, is the triple $(v_p(c_4),v_p(c_6),v_p(\Delta))$. In \cite{Papadopoulos1993}, it is shown that if an elliptic curve $E$ is given by a minimal Weierstrass model, then the Kodaira-N\'{e}ron type at a prime $p$ of $E$ is uniquely determined by $\operatorname{sig}_p(E)$ if $p>3$. If $p \in \lbrace 2,3 \rbrace$, then additional information is needed to determine the Kodaira-N\'{e}ron type.  All of this is summarized in the online supplement \cite{online} to \cite{br}.

If $E$ is an elliptic curve with a torsion point of order $P\geq4$, then $E$ admits a unique model over $\Q$ \cite[Proposition~1.3]{Baaziz}, the so-called Tate normal form,
\begin{equation}
y^{2}+\left(  1-g\right)  xy-fy=x^{3}-fx^{2},\label{tate}
\end{equation}
where $f\in \Q^{\times}$ and $g\in \Q$, and $P=\left(  0,0\right)  $. When $P$ has order $n=4,5,\ldots,10,12$, the corresponding modular curve $X_{1}(n)$ has genus $0$, and this leads to such curves being parameterizable by their Tate normal form with $f,g\in\mathbb{Q}(t)$ \cite{Kubert}. Now let $C_{n}$ denote the cyclic group of order $n$. For $n=4,5,\ldots,10,12$, the Tate normal form leads to a parameterized elliptic curve $E_{C_{n}}=E_{C_{n}}(a,b)$ with the property that if $E$ is an elliptic curve with $C_{n}\hookrightarrow E(\Q)$, then there are relatively prime integers $a$ and $b$ such that $E$ is $\Q$-isomorphic to $E_{C_{n}}$, and $E_{C_{n}}$ is given by an integral Weierstrass model and $P=(0,0)$ is the torsion point of order $n$ \cite[Lemma 2.9]{Barrios}. Elliptic curves with a torsion point of order $3$ are also parameterizable, but we must take special care with the elliptic curves with a $3$-torsion point and $j$-invariant $0$. As a result, we will consider two different parameterizations in this setting, namely, $E_{C_{3}}=E_{C_{3}}(a,b)$ and $E_{C_{3}^{0}}=E_{C_{3}^{0}}(a)$, where $E_{C_{3}^{0}}$ parameterizes elliptic curves with a torsion point of order $3$ and $j$-invariant $0$ \cite[Proposition 4.3]{Barrios}. For both parameterizations, we have that $P=(0,0)$ is a torsion point of order $3$. Now let $T=C_{3},C_{3}^{0},C_{5},$ or $C_{7}$. By loc. cit., every elliptic curve with a torsion point of order $\ell=3,5,7$ arises as an integral specialization of $E_{T}$. Recall that we set $\widetilde{E}_{T}=E_{T}/T$. By V\'{e}lu's explicit formulas \cite{velu}, we obtain a Weierstrass model for $\widetilde{E}_{T}$. The following proposition
summarizes this discussion.

\begin{prop}\label{isomodels}
Let $E_{T}$ and $\widetilde{E}_{T}$ be the parameterized elliptic curves given in Tables~\ref{ta:ETmodel} and~\ref{ta:ETtilmodel}, respectively. Let $E/\mathbb{Q}$ be an elliptic curve with a torsion point $P$ of order $\ell=3,5,$ or $7$, and set $\widetilde{E}=E/\langle P\rangle$. Then, there are integers $a$ and $b$ with $a$ positive such that the following hold:

\begin{enumerate}
\item[$(a)$] If $\ell=5,7$, then $E\ $and $\widetilde{E}$ are $\mathbb{Q}$-isomorphic to $E_{T}(a,b)$ and $\widetilde{E}_{T}(a,b)$, respectively, with $\gcd(a,b)=1$;

\item[$(b)$] If $\ell=3$ and the $j$-invariant of $E$ is nonzero, then $E\ $and $\widetilde{E}$ are $\mathbb{Q}$-isomorphic to $E_T(a,b)$ and $\widetilde{E}_T(a,b)$, respectively, with $\gcd(a,b)=1$;

\item[$(c)$] If $\ell=3$ and the $j$-invariant of $E$ is $0$, then $E\ $and $\widetilde{E}$ are $\mathbb{Q}$-isomorphic to either $(i)$ $E_{C_3}(24,1)$ and $\widetilde{E}_{C_3}(24,1)$, respectively, or $(ii)$ $E_{C_3^0}(a)$ and $\widetilde{E}_{C_3^0}(a)$, respectively, with $a$ cubefree.
\end{enumerate}
\end{prop}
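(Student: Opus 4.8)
The plan is to reduce the proposition to the parameterization of $E$ already recorded in \cite{Barrios}, together with a descent of the marked subgroup under V\'elu's formulas. First I would dispose of the statements about $E$ itself. For $\ell=5,7$ the existence of relatively prime integers $a,b$ and a $\Q$-isomorphism $\psi\colon E\to E_T(a,b)$ carrying $P$ to $(0,0)$ is exactly \cite[Lemma 2.9]{Barrios}; for $\ell=3$ the corresponding statement --- including the dichotomy between the generic family $E_{C_3}(a,b)$ (the case $j\neq 0$, part (b)) and the $j$-invariant $0$ family $E_{C_3^0}(a)$ (part (c)(ii)) --- is \cite[Proposition 4.3]{Barrios}. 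Thus for the $E$-part of each case it suffices to quote these results, after normalizing the sign of $a$ to be positive, which is harmless.

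The substance of the proof is then to transport the isomorphism to the quotients. Let $\phi\colon E\to\widetilde{E}=E/\langle P\rangle$ and $\phi_T\colon E_T(a,b)\to\widetilde{E}_T(a,b)$ be the quotient isogenies, where $\widetilde{E}_T(a,b)$ is the V\'elu model recorded in Table~\ref{ta:ETtilmodel}. Since $\psi$ is a $\Q$-isomorphism with $\psi(P)=(0,0)$, it carries $\ker\phi=\langle P\rangle$ isomorphically onto $\ker\phi_T=\langle(0,0)\rangle$, so there is a unique isomorphism $\overline{\psi}\colon\widetilde{E}\to\widetilde{E}_T(a,b)$ with $\overline{\psi}\circ\phi=\phi_T\circ\psi$; because $\psi$, $\phi$, and $\phi_T$ are all defined over $\Q$, so is $\overline{\psi}$. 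This simultaneously produces the claimed $\Q$-isomorphism $\widetilde{E}\cong\widetilde{E}_T(a,b)$ with the \emph{same} parameters $a,b$ (and the same coprimality or cubefree normalization), which is what parts (a), (b), and (c)(ii) assert. The only genuinely computational input here is the verification that V\'elu's formulas applied to $E_T(a,b)$ with kernel $\langle(0,0)\rangle$ return the integral model listed in Table~\ref{ta:ETtilmodel}, a direct calculation I would relegate to the tables.

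The delicate point, and the step I expect to require real care, is the $j$-invariant $0$ case, part (c)(i). Here I would compute the invariant $c_4(E_{C_3}(a,b))$ explicitly as a polynomial in $a$ and $b$ and analyze the equation $c_4(E_{C_3}(a,b))=0$ over coprime integers with $a>0$; the claim to establish is that, up to $\Q$-isomorphism, the generic family $E_{C_3}(a,b)$ meets the $j=0$ locus only in the single curve $E_{C_3}(24,1)$. I would then have to reconcile the two parameterizations on this locus --- checking that $E_{C_3}(24,1)$ is genuinely the exceptional curve not captured, with its given marked $3$-torsion point, by any $E_{C_3^0}(a)$ with $a$ cubefree, and conversely that every other $j=0$ curve with a rational $3$-torsion point is $\Q$-isomorphic to some $E_{C_3^0}(a)$ with $a$ cubefree. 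Once the $E$-side dichotomy of (c) is pinned down in this way, the descent to $\widetilde{E}$ via the induced isomorphism $\overline{\psi}$ proceeds verbatim as above, yielding the corresponding statements for $\widetilde{E}_{C_3}(24,1)$ and $\widetilde{E}_{C_3^0}(a)$.
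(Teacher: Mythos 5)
Your proposal is correct and follows essentially the same route as the paper, which does not give a separate formal proof but presents Proposition~\ref{isomodels} as a summary of the preceding discussion: quote \cite[Lemma 2.9]{Barrios} for $\ell=5,7$ and \cite[Proposition 4.3]{Barrios} for $\ell=3$ (including the $j=0$ dichotomy with the exceptional curve $E_{C_3}(24,1)$), then obtain the models for $\widetilde{E}_T$ from V\'elu's formulas \cite{velu}. Your explicit verification that the $\Q$-isomorphism $\psi$ descends to a $\Q$-isomorphism $\overline{\psi}$ of the quotients (using that $\psi$ carries $\langle P\rangle$ to $\langle(0,0)\rangle$ and degree count), and your computation that $c_4(E_{C_3}(a,b))=a^3(a-24b)$ pins down $(a,b)=(24,1)$ as the only $j=0$ member of the generic family, simply fill in details the paper leaves implicit or delegates to the cited reference.
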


\vspace{-0.85em} {\renewcommand*{\arraystretch}{1.18}
\begin{longtable}{cccc}
\caption[Weierstrass Model of $E_{T}$]{The Weierstrass Model $E_{T}:y^{2}+a_{1}xy+a_{3}y=x^{3}+a_{2}x^{2}$.}\\
\hline
$T$ & $a_{1}$ & $a_{2}$ & $a_{3}$  \\
\hline
\endfirsthead
\caption[]{\emph{continued}}\\
\hline
$T$ & $a_{1}$ & $a_{2}$ & $a_{3}$\\
\hline
\endhead
\hline
\multicolumn{2}{r}{\emph{continued on next page}}
\endfoot
\hline
\endlastfoot
$C_{3}^{0}$ & $0$ & $0$ & $a$ \\\hline
$C_{3}$ & $a$ & $0$ & $a^{2}b$  \\\hline
$C_{5}$ & $a-b$ & $-ab$ & $-a^{2}b$  \\\hline
$C_{7}$ & $a^{2}+ab-b^{2}$ & $a^{2}b^{2}-ab^{3}$ & $a^{4}b^{2}-a^{3}b^{3}$ 
\label{ta:ETmodel}	
\end{longtable}}

\vspace{-0.85em} {\renewcommand*{\arraystretch}{1.18}
\begin{longtable}{cC{2in}C{3.5in}}
\caption[Weierstrass Model of $\widetilde{E}_{T}$]{The Weierstrass Model $\widetilde{E}_{T}:y^{2}+a_{1}xy+a_{3}y=x^{3}+a_{2}x^{2} + a_4x +a_6$, where $a_1,a_2,a_3$ are the Weierstrass coefficients of $E_T$ in Table~\ref{ta:ETmodel}.}\\
\hline
$T$ & $a_4$ & $a_6$  \\
\hline
\endfirsthead
\caption[]{\emph{continued}}\\
\hline
$T$  & $a_4$ & $a_6$ \\
\hline
\endhead
\hline
\multicolumn{2}{r}{\emph{continued on next page}}
\endfoot
\hline
\endlastfoot
$C_{3}^{0}$ & $0$ & $-7a^2$ \\\hline
$C_{3}$ & $-5a^3b$ & $-a^4b(a+7b)$ \\\hline
$C_{5}$  & $ 5ab(a^2-2ab-b^2)$ & $ab(a^4 - 15a^3b + 5a^2b^2 -10ab^3 - b^4$ \\\hline
$C_{7}$  & $ 5ab(a-b)(a^2 - ba + b^2)(a^3 - 5a^2b + 2ab^2 + b^3)$ & $ab(a-b)(a^9 - 18a^8b + 76a^7b^2 - 182a^6b^3 + 211a^5b^4 - 132a^4b^5 +   70a^3b^6 - 37a^2b^7 + 9ab^8 + b^9)$
\label{ta:ETtilmodel}	
\end{longtable}}

\bigskip

We now turn to the main business of the paper and divide our calculations over the next sections based on $\ell \in \lbrace 3,5,7 \rbrace$.  In each case we will characterize the elliptic curves over $\Q$ that locally have a subgroup of order $\ell$ and when those Tamagawa numbers are divisible by~$\ell$.  

\section{Local Tamagawa numbers of \texorpdfstring{$p$}{p}-isogenous elliptic curves}

In this section, we explicitly classify the local Tamagawa numbers of $\ell$-isogenous rational elliptic curves $E$ and $\widetilde{E}$, where $\ell\in\left\{3,5,7\right\}  $, $E$ has an $\ell$ torsion point $P$, and $\widetilde{E}=E/\left\langle P\right\rangle $. By Proposition \ref{isomodels}, it suffices
to classify the local Tamagawa numbers of the parameterized elliptic curves $E_{T}$ and $\widetilde{E}_{T}$. This is the setting of the main result of this section, which we will use in the subsequent sections to prove our results pertaining to global Tamagawa numbers.

\begin{thm}\label{thmontama}
Let $E_{T}$ and $\widetilde{E}_{T}$ be as given in\ Tables \ref{ta:ETmodel} and \ref{ta:ETtilmodel}, respectively. Then $E_{T}$ and $\widetilde{E}_{T}$ have bad reduction at a prime $p$ if and only if their parameters satisfy one of the conditions listed in Table \ref{ta:Tamagawanumbers}, and the local Tamagawa numbers $c_{p}$ and $\widetilde{c}_{p}$ of $E_{T}$ and $\widetilde{E}_{T}$, respectively, are as given.
\end{thm}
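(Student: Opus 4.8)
The plan is to carry out Tate's algorithm explicitly on the two parameterized families, organizing the work by the prime $p$ and by which irreducible factor of the discriminant $p$ divides. First I would record, for each $T\in\{C_3,C_3^0,C_5,C_7\}$, the standard invariants $c_4$, $c_6$, and $\Delta_T$ of $E_T$ (and likewise $\widetilde{c}_4$, $\widetilde{c}_6$, $\widetilde{\Delta}_T$ of $\widetilde{E}_T$) as explicit polynomials in $a$ and $b$ obtained from the models in Tables~\ref{ta:ETmodel} and~\ref{ta:ETtilmodel}. Factoring $\Delta_T$ and $\widetilde{\Delta}_T$ over $\Z[a,b]$ then identifies all possible primes of bad reduction — namely the primes dividing $a$, those dividing $b$, and those dividing the remaining polynomial factors — which establishes the ``if and only if'' for bad reduction. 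Because $\gcd(a,b)=1$, a prime $p$ divides at most one of $a$ and $b$, and this coprimality lets me read off $v_p(c_4)$, $v_p(c_6)$, and $v_p(\Delta_T)$ separately on each stratum, hence the $p$-adic signature $\operatorname{sig}_p(E_T)=(v_p(c_4),v_p(c_6),v_p(\Delta_T))$ directly from the factorizations.

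For the primes at which the reduction is multiplicative — those with $v_p(\Delta_T)>0$ and $v_p(c_4)=0$, accounting for the generic stratum — the local Tamagawa number is $v_p(\Delta_T)$ in the split case and $\gcd(2,v_p(\Delta_T))$ in the non-split case. To decide split versus non-split I would examine the nodal tangents, equivalently whether $-c_6$ is a square in $\Q_p^{\times}$ (the appropriate residue condition at the node), which becomes an explicit congruence or quadratic-character condition on $a$ and $b$. This step produces the bulk of the entries of Table~\ref{ta:Tamagawanumbers} with comparatively little effort.

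The additive primes require running Tate's algorithm step by step. For $p>3$ the Kodaira type, and with it the Tamagawa number, is pinned down by $\operatorname{sig}_p$ alone via the classification in \cite{Papadopoulos1993}, so it suffices to read off the signature on each stratum and consult the signature-to-type dictionary. For $p\in\{2,3\}$ the signature is insufficient, and I would instead track the algorithm's intermediate data — the factorization type of the cubic, the valuations arising at Steps 6--9, and the admissible changes of variables — stratifying by congruences on $a$ and $b$ modulo small powers of $2$ and $3$. As a cross-check, the $\ell$-isogeny between $E_T$ and $\widetilde{E}_T$ forces their local Tamagawa numbers to differ by a factor of $\ell^{\pm1}$ at each prime \cite[Lemma 6.2]{Dokchitsersquared}, which I would use to verify consistency between the $E_T$ and $\widetilde{E}_T$ columns.

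The hard part will be the family $T=C_3$ at the primes $2$ and $3$. The model of $E_{C_3}$ from Table~\ref{ta:ETmodel} need not be minimal there, so before Tate's algorithm applies I must produce integral \emph{minimal} models; this is where the auxiliary parameters $c$, $d$, $e$ enter, encoding the precise divisibility and congruence data of $a$ and $b$ needed to perform the requisite change of variables and to decide when Step~11 is reached. Carrying this out correctly — simultaneously for $E_{C_3}$, $E_{C_3^0}$, and their quotients — is the delicate point, since the number of congruence strata is largest at $p=2,3$ and each demands its own run through the later steps. The remaining families $C_5$ and $C_7$ are easier: away from $\ell$ the reduction is never additive \cite{MR0457444}, so the only possible additive prime is $p=\ell\in\{5,7\}$, which exceeds $3$ and where $\operatorname{sig}_p$ already fixes the Kodaira type, leaving the multiplicative analysis to govern all other bad primes.
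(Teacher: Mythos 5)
Your proposal takes a genuinely different and more self-contained route than the paper: the paper never re-derives the $E_T$ column (it imports the reduction types and the values $c_p$ from \cite{br} and \cite{br2}), and it obtains most of the $\widetilde{c}_p$ entries at additive primes from the Dokchitser--Dokchitser isogeny theorems rather than from Tate's algorithm. However, as written your plan has a genuine gap at the additive primes $p>3$. You claim that for $p>3$ the Kodaira type, \emph{and with it the Tamagawa number}, is pinned down by $\operatorname{sig}_p$ alone. The first half is true \cite{Papadopoulos1993}, but the second half is false for exactly the types that occur in these families: for types $\mathrm{IV}$, $\mathrm{IV}^{\ast}$, $\mathrm{I}_0^{\ast}$, and $\mathrm{I}_n^{\ast}$ the Tamagawa number is not a function of the Kodaira type; it depends in addition on whether an auxiliary polynomial arising in Steps 5, 7, or 8 of Tate's algorithm splits over $\F_p$, a quadratic-character condition invisible to the signature. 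This is precisely what produces the rows of Table~\ref{ta:Tamagawanumbers} in which $\widetilde{c}_p$ depends on $p \bmod 6$: for $T=C_3^0$ with $v_p(a)>0$, $p\neq 3$, and for $T=C_3$ with $v_p(a)\not\equiv 0 \pmod{3}$, $p\neq 3$, the curves have type $\mathrm{IV}$ or $\mathrm{IV}^{\ast}$ with the same signature for every such $p$, yet $\widetilde{c}_p=3$ if $p\equiv 1\pmod{6}$ and $\widetilde{c}_p=1$ if $p\equiv 2,5\pmod{6}$. Reading the Tamagawa number off a signature-to-type dictionary would assign a single value on each stratum and miss this dichotomy entirely.

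Nor can your proposed cross-check repair this: knowing that $c_p=3$ for $E_T$ and that $c_p/\widetilde{c}_p$ is a power of $3$ \cite[Lemma 6.2]{Dokchitsersquared} is consistent with both $\widetilde{c}_p=3$ (ratio $1$) and $\widetilde{c}_p=1$ (ratio $3$), so it cannot decide between them. The fix is concrete but must be part of the argument: at every additive prime where the type is $\mathrm{IV}$, $\mathrm{IV}^{\ast}$, $\mathrm{I}_0^{\ast}$, or $\mathrm{I}_n^{\ast}$ — regardless of whether $p>3$ — you must continue Tate's algorithm through the relevant step on $\widetilde{E}_T$ and record the splitting of the auxiliary quadratic (or cubic) as an explicit Legendre-symbol condition in $a$, $b$, $p$; for the two rows above this reduces to $\left(\frac{-3}{p}\right)=\pm 1$, i.e.\ to $p \bmod 6$. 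Alternatively, do what the paper does: since $E_T$ has an $\ell$-torsion point and additive potentially good reduction at these primes, \cite[Theorem 6.1]{Dokchitsersquared} gives the exact ratio $c_p/\widetilde{c}_p$ in terms of $\left(\frac{-3}{p}\right)$, which combined with $c_p=3$ yields the table's entries with no further computation on $\widetilde{E}_T$. With that correction, the rest of your outline (multiplicative primes via $v_p(\Delta)$ and the split/non-split criterion, minimal models with the parameters $c,d,e$ for $C_3$ at $p=2,3$, and the observation that $C_5$, $C_7$ are additive only at $p=\ell$) is a viable, if longer, path to the theorem.
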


{\begingroup \tiny
\renewcommand{\arraystretch}{1.2}
 \begin{longtable}{cccccc}
 	\caption{Local Tamagawa numbers of $E_{T}$ and $\widetilde{E}_{T}$}\\
	\hline
$T$ & $p$ & \multicolumn{2}{c}{Conditions} & $c_{p}$ & $\widetilde{c}_{p}$\\
	\hline
	\endfirsthead
	\hline
$T$ & $p$ & \multicolumn{2}{c}{Conditions} & $c_{p}$ & $\widetilde{c}_{p}$ \\
	\hline
	\endhead
	\hline 
	\multicolumn{4}{r}{\emph{continued on next page}}
	\endfoot
	\hline 
	\endlastfoot
	
$C_{3}^{0}$ & $\neq3$ & $v_{p}(a)>0$ & $p\equiv1\! \pmod{6}$ & $3$ &
$3$\\\cmidrule{4-6}
&  &  & $p\equiv2,5 \pmod{6}$ & $3$ & $1$\\\cmidrule{2-6}
& $3$ & $a\equiv\pm1 \pmod{9}$ &  & $1$ & $3$\\\cmidrule{3-6}
&  & $a\equiv\pm2 \pmod{9}$ &  & $2$ & $2$\\\cmidrule{3-6}
&  & $a\equiv\pm4 \pmod{9}$ &  & $1$ & $1$\\\cmidrule{3-6}
&  & $a\equiv0\! \pmod{3}$ &  & $3$ & $1$\\\hline
$C_{3}$ & $\ge 2$ & $n=v_{p}(b)>0$ &  & $3n$ & $n$\\\cmidrule{2-6}
& $\neq3$  & $n=v_{p}(a-27b)>0$ & $p\equiv1\! \pmod{6}$ & $n$ & $3n$\\\cmidrule{4-6}
&  &  & $p\equiv2,5\! \pmod{6}$ & $2-n \! \pmod{2}$ &
$2-n \! \pmod{2}$\\\cmidrule{3-6}
& & $v_{p}(a)\not \equiv 0\! \pmod{3}$ &
$p\equiv1\! \pmod{6}$ & $3$ & $3$\\\cmidrule{4-6}
&  &  & $p\equiv2,5\! \pmod{6}$ & $3$ & $1$\\\cmidrule{2-6}
& $3$ & $v_{3}(a-27b)=4$ &  & $1$ & $1$\\\cmidrule{3-6}

&  & $v_{3}(a) = 3,v_{3}(a-27b)=3,$ &  $\left( \frac{-b(4a+27b)/81}{3}\right) = 1$ & $1$ & $3$\\\cmidrule{4-6}
&  & $bd^{2}e^{3}\left(  b^{3}d^{2}e^{5}-c\right)  \not \equiv
7\pmod{9}$ & $\left( \frac{-b(4a+27b)/81}{3}\right) = -1$ & $1$ & $1$\\\cmidrule{3-6}

&  & $v_{3}(a)\ge 6$, $v_{3}(a)\equiv0\! \pmod{3},$ & $\left( \frac{(4-b^2d^2e^4)/3}{3}\right) = 1$  & $1$ & $3$\\\cmidrule{4-6}
&  & $ v_{3}(a-27b)=3, b^{4}d^{4}e^{8}  \not \equiv
7\pmod{9}$ & $\left( \frac{(4-b^2d^2e^4)/3}{3}\right) = -1$ & $1$ & $1$\\\cmidrule{3-6}

&  & \multicolumn{2}{c}{$v_{3}(a)\equiv0\! \pmod{3},v_{3}%
(a-27b)=3,bd^{2}e^{3}\left(  b^{3}d^{2}e^{5}-c\right)  \equiv
7\! \pmod{9}$} & $2$ & $2$\\\cmidrule{3-6}
&  & $v_{3}(a)=2$ & $\left(\frac{-bce}{3}\right)=1$ & $3$ & $3$\\\cmidrule{4-6}
&  &  & $\left(\frac{-bce}{3}\right)=-1$  & $3$ & $1$\\\cmidrule{3-6}
&  & $v_{3}(a)\equiv2\! \pmod{3},v_{3}(a)\neq2$ &  & $3$ & $1$\\\cmidrule{3-6}
&  & $v_{3}(a-27b)=5$ & $a^{2}-27ab\not \equiv 3^{8} \! \pmod{3^{9}}$
& $1$ & $1$\\\cmidrule{4-6}
&  &  & $a^{2}-27ab\equiv3^{8}\!  \pmod{3^{9}}$ & $3$ & $1$\\\cmidrule{3-6}
&  & $v_{3}(a-27b)=6$ & $a^{2}-27ab\equiv3^{9}\! \pmod{3^{10}}$ &
$1$ & $1$\\\cmidrule{4-6}
&  &  & $a^{2}-27ab\not \equiv 3^{9}\! \pmod{3^{10}}$ & $2$ & $2$\\\cmidrule{3-6}
&  & $n=v_{3}(a-27b)-6\geq1$ & $a^{2}-27ab\not \equiv 3^{9+n}%
\! \pmod{3^{10+n}}$ & $2$ & $2$\\\cmidrule{4-6}
&  &  & $a^{2}-27ab\equiv3^{9+n}\! \pmod{3^{10+n}}$ & $4$ & $4$\\\cmidrule{3-6}
&  & $v_{3}(a)\equiv1\! \pmod{3}$ &  & $3$ & $1$\\\hline
$C_{5}$ & $\geq2$ & $n=v_{p}(ab)$ &  & $5n$ & $n$\\\cmidrule{2-6}
& $\geq7$ & $n=v_{p}(a^{2}+11ab-b^{2})>0$ & $\left(  \frac{-5(a^{2}+b^{2})}%
{p}\right)  =1$ & $n$ & $5n$\\\cmidrule{4-6}
&  &  & $\left(  \frac{-5(a^{2}+b^{2})}{p}\right)  =-1$ &
$2-n \! \pmod{2}$ & $2-n \! \pmod{2}$\\\cmidrule{2-6}
& $5$ & $v_{5}(a+18b)=1$ &  & $1$ & $1$\\\cmidrule{3-6}
&  & $v_{5}(a+18b)\geq2$ &  & $2$ & $2$\\\hline
$C_{7}$ & $\geq2$ & $\,n=v_{p}(ab(a-b))>0$ &  & $7n$ & $n$\\\cmidrule{2-6}
& $\geq13$ & $n=v_{p}(a^3+5a^{2}b-$ & $\left(  \frac{-7(a^{2}-ab+b^{2})}%
{p}\right)  =1$ & $n$ & $7n$\\\cmidrule{4-6}
&  & $8ab^{2}+b^{3})>0$ & $\left(  \frac{-7(a^{2}-ab+b^{2})}{p}\right)  =-1$ &
$2-n \! \pmod{2}$ & $2-n \! \pmod{2}$\\\cmidrule{2-6}
& $7$ & $v_{7}(a+4b)\geq1$ &  & $1$ & $1$

\label{ta:Tamagawanumbers}
\end{longtable}
\endgroup}

\begin{proof}
Since $E_{T}$ and $\widetilde{E}_{T}$ are isogenous, they have the same reduction type at each prime. Theorems 3.4, 3.5, and 3.8 of \cite{br} give necessary and sufficient conditions on the parameters of $E_{T}$ to determine the primes $p$ at which $E_{T}$ has additive reduction, and the local Tamagawa number $c_{p}$. Similarly, Theorem 2.2 of \cite{br2} provides necessary and sufficient conditions on the parameters of $E_{T}$ to determine the prime $p$ at which $E_{T}$ has split or non-split multiplicative reduction. The local Tamagawa number $c_{p}$ then follows since if $E_{T}$ has split multiplicative reduction, then $c_{p}=v_{p}(\Delta_{T})$. Similarly, if $E_{T}$ has non-split multiplicative reduction, then $c_{p}=2-(v_{p}(\Delta_{T})\! \pmod{2})$. It is now verified from \cite{br,br2} that the conditions listed in Table~\ref{ta:Tamagawanumbers} are exactly the cases for which $E_{T}$ has bad reduction at a prime $p$, and that the local Tamagawa number $c_{p}$ is as claimed. It suffices to show that $\widetilde{c}_{p}$ is as claimed in Table \ref{ta:Tamagawanumbers}. We proceed by cases. We note that cases~$2$ and~$3$ are involved, and the reader can find an accompaniment to this proof in \cite[Theorem3\_1.ipynb]{GitHubIsog}. This file contains code written in SageMath, which verifies computational claims made in the proof.

\textbf{Case 1.} Suppose $T=C_{3}^{0}$, and let $a$ be a cubefree positive integer. Since $E_{T}$ has $j$-invariant $0$, we have that if $E_{T}$ has bad reduction at a prime $p$, then $E_{T}$ has additive potentially good reduction at $p$. By \cite[Theorem 3.4]{br}, $E_{T}$ has additive potentially good reduction at a prime $p$ if and only if $p|3a$. If $p\neq3$, then $\operatorname*{typ}_{p}(E_{T})\in\left\{  \rm{IV},\rm{IV}^{\ast}\right\}  $ by loc. cit. with $c_{p}=3$. The result now follows since $E_{T}$ has a $3$-torsion point and \cite[Theorem 6.1]{Dokchitsersquared} gives
\[
\frac{c_{p}}{\widetilde{c}_{p}}=\left\{
\begin{array}
[c]{cl}%
1 & \text{if }\left(  \frac{-3}{p}\right)  =1,\\
3 & \text{if }\left(  \frac{-3}{p}\right)  =-1.
\end{array}
\right.
\]
It remains to consider the case when $p=3$. We now compute $\widetilde{c}_{3}$ by cases.

\qquad\textbf{Subcase 1a.} Suppose $v_{3}(a)=0$. Let $\widetilde{F}_{T}$ be the elliptic curve obtained from $\widetilde{E}_{T}$ via the isomorphism $\left[  1,3,0,\frac{-a}{2}\right]  $. Then
\[
\widetilde{F}_{T}:y^{2}=x^{3}+9x^{2}+27x-\frac{27}{4}\left(  a^{2}-4\right)  .
\]
Note that
\[
a^{2}-4\equiv\left\{
\begin{array}
[c]{rl}%
\pm3\! \pmod{9} & \text{if }a\equiv\pm1,\pm4\! \pmod{9},\\
0\! \pmod{9} & \text{if }a\equiv\pm2\! \pmod{9}.
\end{array}
\right.
\]
It is now checked using Tate's Algorithm that%
\[
\operatorname*{typ}\nolimits_{3}(\widetilde{F}_{T})=\left\{
\begin{array}
[c]{cl}%
\rm{IV}^{\ast} & \text{if }a\equiv\pm1,\pm4\! \pmod{9},\\
\rm{III}^{\ast} & \text{if }a\equiv\pm2\! \pmod{9}.
\end{array}
\right.
\]
In particular, if $a\equiv\pm2\! \pmod{9}$, then $\widetilde{c}_{3}=2$. Now suppose $a\equiv\pm1,4\! \pmod{9}$. Then $\operatorname*{typ}\nolimits_{3}(\widetilde{F}_{T})=\rm{IV}^{\ast}$, and by Tate's Algorithm, $\widetilde{c}_{3}$ is determined by whether the following polynomial splits in $\mathbb{F}_{3}$:
\[
Y^{2}+\frac{1}{12}\left(  a^{2}-4\right)  \equiv\left\{
\begin{array}
[c]{cl}%
Y^{2}+2\! \pmod{3} & \text{if }a\equiv\pm1\! \pmod{9},\\
Y^{2}+1\! \pmod{3} & \text{if }a\equiv\pm4\! \pmod{9}.
\end{array}
\right.
\]
It follows that%
\[
\widetilde{c}_{3}=\left\{
\begin{array}
[c]{cl}%
3 & \text{if }a\equiv\pm1\! \pmod{9},\\
1 & \text{if }a\equiv\pm4\! \pmod{9}.
\end{array}
\right.
\]

\qquad\textbf{Subcase 1b.} Suppose $v_{3}(a)=1,2$. Then
\[
\operatorname*{sig}\nolimits_{3}(\widetilde{E}_{T})=\left(  \infty
,6+2v_{3}(a),9+4v_{3}(a)\right)  =\left\{
\begin{array}
[c]{cl}%
\left(  \infty,8,13\right)   & \text{if }v_{3}(a)=1,\\
\left(  \infty,10,17\right)   & \text{if }v_{3}(a)=2.
\end{array}
\right.
\]
If $v_{3}(a)=1$, we conclude by \cite[Tableau II]{Pap} that
$\operatorname*{typ}_{3}(\widetilde{E}_{T})=\rm{II}^{\ast}$. Now suppose $v_{3}(a)=2$. By \cite[Tableau II]{Pap}, $\widetilde{E}_{T}$ is not a minimal model at $3$. Therefore, there exists a minimal model $\widetilde{E}_{T}^{\prime}$ at $3$ for $\widetilde{E}_{T}$ that satisfies $\operatorname*{sig}_{3}(\widetilde{E}_{T}^{\prime})=\left(  \infty ,4,5\right)  $. By \cite[Tableau II]{Pap}, we conclude that $\operatorname*{typ}_{3}(\widetilde{E}_{T})=\rm{II}$. Consequently, in both cases, we have that $\widetilde{c}_{3}=1$.

\textbf{Case 2.} Suppose $T=C_{3}$. Let $a=c^{3}d^{2}e$ for positive integers $c,d,e$ with $de$ squarefree. Let $F_{T}$ and $\widetilde{F}_{T}$ be the elliptic curves obtained from $E_{T}$ and $\widetilde{E}_{T}$, respectively, via the isomorphism $\left[  c^{2}d,0,0,0\right]  $. By \cite[Theorem 6.1]{Barrios}, $F_{T}$ is a global minimal model for $E_{T}$. The elliptic curve $\widetilde{F}_{T}$ is given by an integral Weierstrass
model and
\[
\operatorname*{sig}(\widetilde{F}_{T})=\left(  cd^{2}e^{3}\left(
a+216b\right)  ,-d^{2}e^{4}\left(  a^{2}-540ab-5832b^{2}\right)  ,d^{4}%
e^{8}b\left(  a-27b\right)  ^{3}\right)  .
\]
We now consider the cases in Table \ref{ta:Tamagawanumbers} separately.

\qquad\textbf{Subcase 2a.} Let $p$ be a prime and suppose that $n=v_{p}(b)>0$. By \cite[Theorem 2.2]{br2}, $E_{T}$ has split multiplicative reduction and $v_{p}(\Delta_{T})=3n$. Since $v_{p}(\widetilde{\Delta}_{T})=n$, it follows that $\widetilde{c}_{p}=n$.

\qquad\textbf{Subcase 2b.} Let $p\neq3$ be a prime and suppose that $n=v_{p}(a-27b)>0$. By \cite[Theorem 2.2]{br2}, $E_{T}$ has multiplicative reduction and $v_{p}(\Delta_{T})=n$. Moreover, $E_{T}$ has split multiplicative reduction if and only if $p\equiv1\! \pmod{6}$. The result follows since $v_{p}(\widetilde{\Delta}_{T})=3n$.

\qquad\textbf{Subcase 2c. }Let $p\neq3$ be a prime and suppose that $v_{p}(a)\not \equiv 0\! \pmod{3}$. By \cite[Theorem 3.5]{br}, $\operatorname*{typ}_{p}(E_{T})\in\left\{  \rm{IV},\rm{IV}^{\ast}\right\}  $ with $c_{p}=3$. The result now follows since $E_{T}$ has a $3$-torsion point and \cite[Theorem 6.1]{Dokchitsersquared} gives
\[
\frac{c_{p}}{\widetilde{c}_{p}}=\left\{
\begin{array}
[c]{cl}%
1 & \text{if }\left(  \frac{-3}{p}\right)  =1,\\
3 & \text{if }\left(  \frac{-3}{p}\right)  =-1.
\end{array}
\right.
\]

\qquad\textbf{Subcase 2d.} Suppose $v_{3}(a-27b)=4$, so that $v_{3}(a)=3$. By \cite[Theorem 3.5]{br}, $\operatorname*{typ}_{3}(E_{T})=\rm{II}$ with $c_{3}=1$. Under these assumptions, it is checked that $\operatorname*{sig}\nolimits_{3}(\widetilde{F}_{T})=\left(  5,8,12\right)  $. By \cite[Tableau~II]{Pap}, $\operatorname*{typ}_{3}(\widetilde{E}_{T})=\rm{II}^{\ast}$. It follows that $\widetilde{c}_{3}=1$.

\qquad\textbf{Subcase 2e.} Suppose $v_{3}(a)=3$, $v_{3}(a-27b)=3,$ and $bd^{2}e^{3}\left(  b^{3}d^{2}e^{5}-c\right)  \not \equiv 7\! \pmod{9}$. In particular, $v_{3}(c)=1$ and $v_{3}(de)=0$. By \cite[Theorem 3.5]{br}, $\operatorname*{typ}_{3}(E_{T})=\rm{II}$ with $c_{3}=1$. Let $\widetilde{F}_{T}^{\prime}$ denote the elliptic curve obtained from $\widetilde{F}_{T}$ via the isomorphism $\left[  1,0,\frac{-cde}{2},\frac{-bde^{2}}{2}\right]  $. Then
\begin{equation}
\widetilde{F}_{T}^{\prime}:y^{2}=x^{3}+\frac{1}{4}c^{2}d^{2}e^{2}x^{2}-\frac{9}{2}bcd^{2}e^{3}x-\frac{1}{4}bd^{2}e^{4}\left(  4a+27b\right)  .
\label{modelFTtil}%
\end{equation}
In \cite{GitHubIsog}, we verify that $v_{3}(4a+27b)=4$. It is then checked that $\widetilde{F}_{T}^{\prime}$ satisfies the first eight steps of Tate's Algorithm. 
The conclusion now follows, since step $8$ of Tate's Algorithm allows us to conclude that $\operatorname*{typ}_{3}(\widetilde{F}_{T}^{\prime})=\rm{IV}^{\ast}$ and
\[
\widetilde{c}_{3}=\left\{
\begin{array}
[c]{cl}%
3 & \text{if }\left(  \frac{-\frac{1}{4}bd^{2}e^{4}\left(  4a+27b\right)
/81}{3}\right)  =1,\\
1 & \text{if }\left(  \frac{-\frac{1}{4}bd^{2}e^{4}\left(  4a+27b\right)
/81}{3}\right)  =-1.
\end{array}
\right.
\]

\qquad\textbf{Subcase 2f.} Suppose $v_{3}(a)\geq6$, $v_{3}(a)\equiv 0\! \pmod{3}$, $bd^{2}e^{3}\left(  b^{3}d^{2}e^{5}-c\right)  \not \equiv 7\! \pmod{9}$, and $v_{3}(a-27b)=3$. In particular, $v_{3}(c)\geq2$ and $v_{3}(de)=0$. By \cite[Theorem 3.5]{br}, $\operatorname*{typ}_{3}(E_{T})=\rm{II}$ with $c_{3}=1$. Next, let $\widetilde{F}_{T}^{\prime}$ denote the elliptic curve obtained from $\widetilde{F}_{T}$ via the isomorphism $\left[  1,3-\frac{1}{3}c^{2}d^{2}e^{2},\frac{-1}{2} cde,\frac{de}{6}\left(  a-3be-9c\right)  \right]  $. The Weierstrass coefficients $a_{i}^{\prime}$ of $\widetilde{F}_{T}^{\prime}$ are
\begin{align*}
a_{1}^{\prime}  &  =a_{3}^{\prime}=0,\ a_{2}^{\prime}=9-\frac{3}{4}c^{2}d^{2}e^{2},\ a_{4}^{\prime}=\frac{1}{6}cd^{2}e^{3}\left(  a-27b\right) -\frac{9}{2}c^{2}d^{2}e^{2}+27,\\
a_{6}^{\prime}  &  =\frac{-d^{2}e^{2}}{108}\left(  ae-27c\right)  \left(ae-54be-27c\right)  +\frac{27}{4}\left(  4-b^{2}d^{2}e^{4}\right)  .
\end{align*}
By inspection, we see that $v_{3}(a_{2}^{\prime})=2,v_{3}(a_{4}^{\prime})\geq4,$ and $v_{3}(a_{6}^{\prime})=4$. For the last equality, \cite{GitHubIsog} verifies that $v_{3}(4-b^{2}d^{2}e^{4})=1$. It follows from Tate's Algorithm that $\operatorname*{typ}_{3}(\widetilde{F}_{T}^{\prime})=\rm{IV}^{\ast}$ and
\[
\widetilde{c}_{3}=\left\{
\begin{array}
[c]{cl}
3 & \text{if }\left(  \frac{(4-b^{2}d^{2}e^{4})/3}{3}\right)  =1,\\
1 & \text{if }\left(  \frac{(4-b^{2}d^{2}e^{4})/3}{3}\right)  =-1.
\end{array}
\right.
\]

\qquad\textbf{Subcase 2g.} Suppose $v_{3}(a)\equiv0\! \pmod{3}$, $v_{3}(a-27b)=3,$ and $bd^{2}e^{3}\left(  b^{3}d^{2}e^{5}-c\right) \equiv7\! \pmod{9}$. In particular, $v_{3}(c)>0$ and $v_{3}(de)=0$. By \cite[Theorem 3.5]{br}, $\operatorname*{typ}_{3}(E_{T})=\rm{III}$ with $c_{3}=2$. By \cite[Theorem 3.1]{Dokchitsersquared}, $E_{T}$ has potentially supersingular tame reduction. It then follows from \cite[Theorem~5.4]{Dokchitsersquared} that $\operatorname*{typ}_{3}(\widetilde{E}_{T})=\rm{III}^{\ast}$. Consequently, $\widetilde{c}_{3}=2$.

\qquad\textbf{Subcase 2h.} Suppose $v_{3}(a)=2$. Then $v_{3}(d)=1$ and $v_{3}(ce)=0$. By \cite[Theorem 3.5]{br}, $\operatorname*{typ}_{3}(E_{T})=\rm{IV}$ with $c_{3}=3$. Let $\widetilde{F}_{T}^{\prime}$ denote the elliptic curve obtained from $\widetilde{F}_{T}$ via the isomorphism $\left[  1,0,\frac{-cde}{2},\frac{-bde^{2}}{2}\right]  $. Then, the Weierstrass model of $\widetilde{F}_{T}^{\prime}$ is as given in (\ref{modelFTtil}). It is then checked that $\widetilde{F}_{T}$ satisfies the first eight steps of Tate's Algorithm and that $\operatorname*{typ}_{3}(\widetilde{F}_{T})=\rm{IV}^{\ast}$. It follows that the local Tamagawa number depends on the splitting of the following polynomial in $\mathbb{F}_{3}$:
\[
Y^{2}+\frac{1}{4\cdot81}bd^{2}e^{4}\left(  4a+27b\right)  \equiv Y^{2}+\frac{ab}{9}\! \pmod{3}=Y^{2}+bce\! \pmod{3}.
\]

\qquad\textbf{Subcase 2i.} Suppose $v_{3}(a)\equiv2\! \pmod{3}$ and $v_{3}(a)\neq2$. By \cite[Theorem 3.5]{br}, $\operatorname*{typ}_{3}(E_{T})=\rm{IV}$ with $c_{3}=3$. Moreover, the assumptions on $a$ imply that $v_{3}(a)\geq5,\ v_{3}(c)\geq1,$ and $v_{3}(d)=1$. It is now checked that $\operatorname*{sig}\nolimits_{3}(\widetilde{F}_{T})=\left(  \geq 6,8,13\right)  $. By \cite[Tableau II]{Pap}, $\operatorname*{typ}_{3}(\widetilde{E}_{T})=\rm{II}^{\ast}$, and thus $\widetilde{c}_{3}=1$. 

\qquad\textbf{Subcase 2j.} Suppose $v_{3}(a-27b)=5$. It follows from \cite[Theorem 3.5]{br} that $\operatorname*{typ}_{3}(E_{T})=\rm{IV}$, and that $c_{3}$ is as claimed in Table \ref{ta:Tamagawanumbers}. Next, observe that under the current assumption, $v_{3}(a)=3$ and $v_{3}(deb)=0$. With these assumptions, it is verified that $\operatorname*{sig}\nolimits_{3}(\widetilde{F}_{T})=\left(  \geq6,9,15\right)  $. By \cite[Tableau II]{Pap}, $\widetilde{F}_{T}$ is not a minimal model at $3$. Consequently, a minimal model $\widetilde{F}_{T}^{\prime}$ of $\widetilde{F}_{T}$ at $3$ must satisfy $\operatorname*{sig}\nolimits_{3}(\widetilde{F}_{T})=\left(  \geq2,3,3\right)$. By \cite[Tableau II]{Pap},  $\operatorname*{typ}_{3}(\widetilde{E}_{T})\in\left\{  \rm{II},\rm{III}\right\}  $. By \cite[Theorem 3.5]{br}, the conductor exponent $f_{3}$ at $3$ of $E_{T}$ is $3$. Since the conductor is an isogeny invariant, we conclude from \cite[Tableau II]{Pap} that $\operatorname*{typ}_{3}(\widetilde{E}_{T})=\rm{II}$. Thus, $\widetilde{c}_{3}=1$.

\qquad\textbf{Subcase 2k.} Suppose $v_{3}(a-27b)\geq6$. By \cite[Theorem~3.5]{br}, $\operatorname*{typ}_{3}(E_{T})=\rm{I}_{n}^{\ast}$ where $n=v_{3}(a-27b)-6$. Moreover, $c_{3}$ is as claimed in Table \ref{ta:Tamagawanumbers}. We conclude from \cite[Theorem~6.1]{Dokchitsersquared} that $c_{3}=\widetilde{c}_{3}$.

\qquad\textbf{Subcase 2l.} Suppose $v_{3}(a)\equiv1\! \pmod{3}$. Then $v_{3}(e)=1$ and $v_{3}(bd)=0$. It is then verified that
\[
\operatorname*{sig}\nolimits_{3}(\widetilde{F}_{T})=\left\{
\begin{array}
[c]{cl}%
\left(  4,6,11\right)   & \text{if }v_{3}(a)=1,\\
\left(  \geq7,10,17\right)   & \text{if }v_{3}(a)\geq4.
\end{array}
\right.
\]
In particular, if $v_{3}(a)=1$, then $\operatorname*{typ}_{3}(\widetilde{E}_{T})=\rm{II}^{\ast}$ by \cite[Tableau~II]{Pap}. Hence, $\widetilde{c}_{3}=1$. So suppose that $v_{3}(a)\geq4$. By \cite[Tableau~II]{Pap}, $\widetilde{F}_{T}$ is not a minimal model at $3$. Therefore, a minimal model $\widetilde{F}_{T}^{\prime}$ of $\widetilde{F}_{T}$ at $3$ must satisfy $\operatorname*{sig}\nolimits_{3}(\widetilde{F}_{T})=\left(  \geq3,4,5\right)  $. By \cite[Tableau~II]{Pap}, $\operatorname*{typ}_{3}(\widetilde{E}_{T})=\rm{II}$, and thus $\widetilde{c}_{3}=1$.

\textbf{Case 3.} Suppose $T=C_{5}$. By \cite[Theorem 6.1]{Barrios}, $E_{T}$ is given by a global minimal model. We now consider the cases in Table \ref{ta:Tamagawanumbers} separately.

\qquad\textbf{Subcase 3a.} Let $p$ be a prime and $n=v_{p}(ab)>0$. By \cite[Theorem 2.2]{br2}, $E_{T}$ has split multiplicative reduction and $v_{p}(\Delta_{T})=5n$. Since $v_{p}(\widetilde{\Delta}_{T})=n$, it follows that $\widetilde{c}_{p}=n$.

\qquad\textbf{Subcase 3b.} Let $p\geq7$ be a prime such that $n=v_{p}(a^{2}+11ab-b^{2})>0$. Then $v_{p}(\Delta_{T})=n$ and by \cite[Theorem~2.2]{br2}, $E_{T}$ has split multiplicative reduction if and only if
\[
\left(  \frac{-5\left(  a^{2}+b^{2}\right)  }{p}\right)  =1.
\]
Since $v_{p}(\widetilde{\Delta}_{T})=5n$, it follows that the local Tamagawa number $\widetilde{c}_{p}$ is as claimed.

\qquad\textbf{Subcase 3c}. Suppose $E_{T}$ has additive reduction at a prime $p$. By \cite[Theorem 3.8]{br}, this is equivalent to $p=5$ and $v_{5}(a+18b)\geq1$. Moreover, the Kodaira-N\'{e}ron type of $E_{T}$ is $\rm{II}$ (resp. $\rm{III}$) if $v_{5}(a+18)=1$ (resp. $\geq2$). By \cite[Theorems~3.1 and~5.4]{Dokchitsersquared}, the Kodaira-N\'{e}ron type of $\widetilde{E}_{T}$ is $\rm{II}^{\ast}$ (resp. $\rm{III}^{\ast}$) . Thus, $\widetilde{c}_{5}=1$ (resp. $2$).

\textbf{Case 4.} Suppose $T=C_{7}$. By \cite[Theorem 6.1]{Barrios}, $E_{T}$ is given by a global minimal model. We now consider the cases in Table \ref{ta:Tamagawanumbers} separately.

\qquad\textbf{Subcase 4a.} Let $p$ be a prime and $n=v_{p}(ab(a-b))>0$. By \cite[Theorem 2.2]{br2}, $E_{T}$ has split multiplicative reduction and $v_{p}(\Delta_{T})=7n$. Since $v_{p}(\widetilde{\Delta}_{T})=n$, it follows that $\widetilde{c}_{p}=n$.

\qquad\textbf{Subcase 4b.} Let $p\geq13$ be a prime and $n=v_{p}(a^{3}+5a^{2}b-8ab^{2}+b^{3})>0$. Then $v_{p}(\Delta_{T})=n$ and by \cite[Theorem~2.2]{br2}, $E_{T}$ has split multiplicative reduction if and only if
\[
\left(  \frac{-7\left(  a^{2}-ab+b^{2}\right)  }{p}\right)  =1.
\]
Since $v_{p}(\widetilde{\Delta}_{T})=7n$, it follows that the local Tamagawa number $\widetilde{c}_{p}$ is as claimed.

\qquad\textbf{Subcase 4c.} Suppose $E_{T}$ has additive reduction at a prime $p$. By \cite[Theorem 3.8]{br}, this is equivalent to $p=7$ and $v_{7}(a+4b)\geq1$. In addition, the Kodaira-N\'{e}ron type of $E_{T}$ is $\rm{II}$. By \cite[Theorems~3.1 and~5.4]{Dokchitsersquared}, the Kodaira-N\'{e}ron type of $\widetilde{E}_{T}$ is $\rm{II}^{\ast}$. It follows that $\widetilde{c}_{7}=1$.
\end{proof}

\section{Tamagawa Divisibility by  \texorpdfstring{$3$}{3}} \label{3sect}

This section investigates the divisibility of the global Tamagawa number of $3$-isogenous rational elliptic curves $E$ and $\widetilde{E}$, where $E$ has a $3$-torsion point $P$ and $\widetilde{E}=E/\left\langle P\right\rangle $. We begin by first restricting to the case when $E$ has $j$-invariant $0$, and then consider the general case when the $j$-invariant is nonzero. Indeed, by Proposition \ref{isomodels}, these two cases correspond to studying the parameterized elliptic curves $E_{C_{3}}$ and $E_{C_{3}^{0}}$, respectively. 

\begin{prop}
\label{PropTama3j0}There are infinitely many elliptic curves $E/\mathbb{Q}$ with $j$-invariant $0$ and a $3$-torsion point $P$ such that one of the following holds:

\begin{enumerate}
\item The global Tamagawa number $\widetilde{c}$ of $\widetilde{E}=E/\left\langle P\right\rangle $ is divisible by $3$;

\item The global Tamagawa number $\widetilde{c}$ of $\widetilde{E}=E/\left\langle P\right\rangle $ is $1$.
\end{enumerate}
\end{prop}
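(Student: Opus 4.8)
The plan is to extract everything from the local table of Theorem~\ref{thmontama} specialized to $T=C_3^0$, and to build the two infinite families by imposing congruence conditions on the (cubefree, positive) parameter $a$ together with Dirichlet's theorem on primes in arithmetic progressions. Recall $\widetilde{c}=\prod_p \widetilde{c}_p$. By the $C_3^0$ rows of Table~\ref{ta:Tamagawanumbers}, the only primes of bad reduction of $\widetilde{E}_{C_3^0}(a)$ are $p=3$ and the primes dividing $a$; at such a prime $p\neq 3$ one has $\widetilde{c}_p=3$ when $p\equiv 1\pmod 6$ and $\widetilde{c}_p=1$ when $p\equiv 2,5\pmod 6$, while $\widetilde{c}_3$ is governed entirely by $a\bmod 9$ (namely $\widetilde{c}_3=1$ exactly when $a\equiv\pm 4\pmod 9$ or $3\mid a$). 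Thus the whole problem reduces to choosing $a$ so that the product of these local factors behaves as desired.

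For part $(1)$, I would take $a=q$ with $q$ a prime satisfying $q\equiv 1\pmod 6$. Then the only bad primes of $\widetilde{E}_{C_3^0}(q)$ are $3$ and $q$, and since $v_q(a)=1>0$ with $q\equiv 1\pmod 6$, the table gives $\widetilde{c}_q=3$; hence $3\mid\widetilde{c}$ irrespective of $\widetilde{c}_3$. For part $(2)$, I would take $a=p$ with $p\equiv 5\pmod 9$: then $p\equiv 2\pmod 3$ (so $p\equiv 5\pmod 6$), whence $\widetilde{c}_p=1$, and $p\equiv -4\pmod 9$, whence $\widetilde{c}_3=1$. As $3$ and $p$ are the only bad primes, $\widetilde{c}=1$. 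In both cases Dirichlet's theorem supplies infinitely many admissible primes $q$, respectively $p$.

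It then remains to see that these recipes yield infinitely many \emph{distinct} curves. By Proposition~\ref{isomodels}(c)(ii) every $E_{C_3^0}(a)$ with $a$ cubefree positive has $j$-invariant $0$ and a rational $3$-torsion point $P=(0,0)$, so each member lies in the family of the proposition. Completing the square shows $E_{C_3^0}(a)$ is $\Q$-isomorphic to $y^2=x^3+a^2/4$, and two curves of this shape are $\Q$-isomorphic if and only if the ratio of their constants lies in $(\Q^\times)^6$, i.e. $a^2/a'^2\in(\Q^\times)^6$; for cubefree positive $a,a'$ this forces $a=a'$. Hence the infinitely many distinct prime values of $a$ produced above give infinitely many pairwise non-isomorphic curves, establishing both $(1)$ and $(2)$.

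There is no genuine analytic obstacle here once Theorem~\ref{thmontama} is granted: the local Tamagawa numbers are already tabulated, and the argument is purely a matter of congruence bookkeeping. The only points requiring a little care are the selection of the residue class $5\pmod 9$ in part $(2)$ — it is precisely the unique class simultaneously forcing $\widetilde{c}_3=1$ (via $a\equiv\pm 4\pmod 9$) and $\widetilde{c}_p=1$ (via $p\equiv 2\pmod 3$) — and the sextic-twist argument guaranteeing that distinct parameters give non-isomorphic curves.
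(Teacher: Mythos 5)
Your proposal is correct and follows essentially the same route as the paper: reduce via Proposition~\ref{isomodels} to the family $\widetilde{E}_{C_3^0}(a)$ and read the local Tamagawa numbers off the $C_3^0$ rows of Theorem~\ref{thmontama}, choosing parameters by congruence conditions. The only (harmless) differences are that for $\widetilde{c}=1$ you force $\widetilde{c}_3=1$ via primes $a=p\equiv 5\pmod 9$ while the paper instead uses parameters divisible by $3$ (the $a\equiv 0\pmod 3$ row), and that you make explicit the Dirichlet and sextic-twist distinctness arguments that the paper leaves implicit in the parameterization.
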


\begin{proof}
By Proposition \ref{isomodels}, if $E$ has $j$-invariant $0$ and a $3$-torsion point $P$, then $E$ and $\widetilde{E}=E/\left\langle P\right\rangle $ are $\mathbb{Q}$-isomorphic to either $\left(  i\right)  $ $E_{C_{3}}(24,1)$ and $\widetilde{E}_{C_{3}}(24,1)$, respectively, or $\left(  ii\right)  $ $E_{C_{3}^{0}}(a)$ and $\widetilde{E}_{C_{3}^{0}}(a)$, respectively, for some positive cubefree integer $a$. Thus, to prove the proposition, it suffices to consider the parameterized family of elliptic curves $\widetilde{E}_{C_{3}^{0}}(a)$. We now proceed by cases.

\textbf{Case 1.} Let $p\equiv1\! \pmod{6}$ be a prime. Then for each cubefree positive integer $a$ that is divisible by $p$, it is the case that $3$ divides the global Tamagawa number $\widetilde{c}$ of $\widetilde{E}_{C_{3}^{0}}(a)$ by Theorem \ref{thmontama}.

\textbf{Case 2.} Let $3a$ be a positive cubefree integer such that for each prime $p\neq3$ dividing $a$, it is the case that $p\equiv2,5\! \pmod{6}$. By Theorem \ref{thmontama}, we conclude that the global Tamagawa number $\widetilde{c}$ of $\widetilde{E}_{C_{3}^{0}}(a)$ is $1$.
\end{proof}

Note that the condition that $p\equiv1\! \pmod{6}$ is equivalent to $\left(  \frac{-3}{p}\right)  =1$. In particular, half of the primes satisfy this condition. Our next results builds on Proposition \ref{PropTama3j0} to provide a density result on the primes $p$ for which the global Tamagawa number of $\widetilde{E}_{C_{3}^{0}}(p)$ is divisible by $3$.

\begin{cor}
For a prime $p$, consider the elliptic curve $\widetilde{E}_{C_{3}^{0}}(p)$. Then,

\begin{enumerate}
\item The global Tamagawa number of $\widetilde{E}_{C_{3}^{0}}(p)$ is divisible by $3$ for $\frac{2}{3}$ of the primes;

\item The global Tamagawa number of $\widetilde{E}_{C_{3}^{0}}(p)$ is not divisible by $3$ for $\frac{1}{3}$ of the primes.
\end{enumerate}
\end{cor}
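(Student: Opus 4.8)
The plan is to reduce the statement to a counting problem about primes, governed entirely by the local Tamagawa number $\widetilde{c}_{p}$ at the prime $p=a$ and at the prime $3$, as recorded in Theorem~\ref{thmontama} for the family $\widetilde{E}_{C_3^0}(a)$. Setting $a$ equal to a prime $p$ (which is automatically cubefree), I would first isolate the primes at which $\widetilde{E}_{C_3^0}(p)$ has bad reduction. By the $C_3^0$ rows of Table~\ref{ta:Tamagawanumbers}, bad reduction occurs only at primes dividing $3a = 3p$, so the only contributions to $\widetilde{c}$ come from the place $p$ itself and possibly the place $3$.

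The key step is to separate the analysis into the case $p=3$ and the case $p\neq 3$, and in the latter case to read off $\widetilde{c}_p$ from the sign of $\left(\frac{-3}{p}\right)$. When $p\neq 3$, at the place $\ell=p$ we are in the row $v_p(a)>0$ of the $C_3^0$ block, where $\widetilde{c}_p = 3$ if $p\equiv 1\pmod 6$ and $\widetilde{c}_p = 1$ if $p\equiv 2,5\pmod 6$; at the place $3$ we are in the $v_3(a)=0$ subcase (since $p\neq 3$ means $3\nmid a$), where $\widetilde{c}_3 \in \{1,2,3\}$ but is never divisible by anything other than $3$ in the relevant congruence class. I would then observe that the global Tamagawa number is the product $\widetilde{c} = \prod_\ell \widetilde{c}_\ell$, so $3\mid \widetilde{c}$ precisely when at least one local factor is divisible by $3$. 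Since at $\ell=3$ the possible values are $1,2,3$, divisibility by $3$ there happens only in the residue class $a\equiv\pm1\pmod 9$, which is a condition mod $9$ and hence does not directly give a clean density; the cleaner contribution is from the place $\ell=p$, which is divisible by $3$ exactly when $p\equiv 1\pmod 6$.

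I would then combine these observations using Dirichlet's theorem on primes in arithmetic progressions. The density of primes with $p\equiv 1\pmod 6$ is $\tfrac12$, and as noted in the remark preceding the corollary this is equivalent to $\left(\tfrac{-3}{p}\right)=1$. The subtlety is that $3\mid\widetilde c$ can also arise from the factor at $\ell=3$ when $p\equiv 2,5\pmod 6$, so I must account for those primes as well. Splitting the primes $p\equiv 2,5\pmod 6$ (density $\tfrac12$) further by their residue mod $9$ and invoking Theorem~\ref{thmontama} at $\ell=3$, I would determine that $\widetilde c_3=3$ precisely for the residue classes $p\equiv\pm1\pmod 9$. A Chinese Remainder Theorem bookkeeping of the joint conditions mod $6$ and mod $9$ (equivalently mod $18$), together with Dirichlet density for each admissible class, should yield total density $\tfrac23$ for $3\mid\widetilde c$ and $\tfrac13$ for $3\nmid\widetilde c$.

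The main obstacle I anticipate is the careful combinatorial accounting of the joint congruence conditions mod $18$: one must correctly tally which classes mod $9$ force $\widetilde{c}_3$ divisible by $3$, intersect these with the classes mod $6$ controlling $\widetilde{c}_p$, and verify that the resulting admissible classes for which $3\nmid\widetilde{c}$ have total Dirichlet density exactly $\tfrac13$. The delicate point is ensuring no double-counting (a prime could in principle make \emph{both} local factors divisible by $3$) and confirming that the arithmetic-progression densities sum correctly; once the residue-class bookkeeping is laid out explicitly, Dirichlet's theorem closes the argument.
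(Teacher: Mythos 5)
Your proposal is correct and takes essentially the same approach as the paper: both read off $\widetilde{c}=\widetilde{c}_3\widetilde{c}_p$ from Theorem~\ref{thmontama}, note that $3\mid\widetilde{c}_p$ iff $p\equiv 1\pmod{6}$ (density $\tfrac12$) and $3\mid\widetilde{c}_3$ iff $p\equiv\pm1\pmod{9}$, and conclude by Dirichlet density. The double-counting issue you flag is handled in the paper by conditioning on the mod-$6$ class first: among $p\equiv 2,5\pmod{6}$ the condition $p\equiv\pm1\pmod{9}$ collapses to $p\equiv-1\pmod{9}$ (density $\tfrac16$), giving $\tfrac12+\tfrac16=\tfrac23$, which agrees with your inclusion-exclusion count $\tfrac12+\tfrac13-\tfrac16$.
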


\begin{proof}
By Theorem \ref{thmontama}, the global Tamagawa number $\widetilde{c}$ of $\widetilde{E}_{C_{3}^{0}}(p)$ is $\widetilde{c}=\widetilde{c}_{3}\widetilde{c}_{p}$. Note that the condition that $p\equiv1\! \pmod{6}$ is equivalent to $\left(  \frac{-3}{p}\right)  =1$. In particular, half of the primes satisfy this condition. For these primes $p$, Theorem \ref{thmontama} implies that $\widetilde{c}$ is divisible by $3$.

Now suppose that $p\equiv2,5\! \pmod{6}$, or equivalently $\left(\frac{-3}{p}\right)  =-1$. Half the primes satisfy this condition, and we have that $\widetilde{c}_{p}=1$. However, $\widetilde{c}_{3}=3$ if and only if $p\equiv\pm1\! \pmod{9}$. Equivalently, $p\equiv -1\! \pmod{9}$ since $p\equiv2,5\! \pmod{6}$. In particular, $\frac{1}{6}$ of the primes satisfy this condition. We conclude that $\frac{2}{3}$ of the primes satisfy the condition that $\widetilde{c}$ is divisible by $3$, and that for $\frac{1}{3}$ of the prime, $\widetilde{c}$ is coprime to $3$.
\end{proof}

By \cite[Theorem 1.2]{br}, there is exactly one elliptic curve with $j$-invariant $0$ and a $3$-torsion point that has global Tamagawa number equal to $1$. Namely, $E_{C_{3}^{0}}(1):y^{2}+y=x^{3}$. Our next result shows that this is the only elliptic curve with $j$-invariant $0$ and a $3$-torsion point~$P$ such that its $3$-isogenous elliptic curve $E/\left\langle P\right\rangle $ also has a $3$-torsion point. Equivalently, $\operatorname*{im}\overline{\rho}_{E_{C_{3}^{0},3}(1)}$ is a split Cartan subgroup of $\operatorname*{GL}\nolimits_{2}(\mathbb{F}_{3})$. 

\begin{prop}\label{j0-3-torsiso}
Let $E/\mathbb{Q}$ be an elliptic curve with $j$-invariant $0$ and a $3$-torsion point~$P$. Then $\widetilde{E}=E/\left\langle P\right\rangle $ has a $3$-torsion point if and only if $E$ and $\widetilde{E}$ are $\mathbb{Q}$-isomorphic to $E_{C_{3}^{0}}(1)$ and $\widetilde{E}_{C_{3}^{0}}(1)$, respectively. Moreover, $\widetilde{E}_{C_{3}^{0}}(1)$ is $\mathbb{Q}$-isomorphic to $E_{C_{3}}(24,1)$, which is the elliptic curve with LMFDB label \href{https://www.lmfdb.org/EllipticCurve/Q/27/a/3}{27.a3}.
\end{prop}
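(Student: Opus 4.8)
The plan is to reduce to the two explicit families furnished by Proposition~\ref{isomodels}(c) and to test the presence of a rational $3$-torsion point on each quotient directly. By that proposition, an elliptic curve $E/\Q$ with $j$-invariant $0$ and a rational $3$-torsion point is $\Q$-isomorphic either to $E_{C_3^0}(a)$ for a cubefree positive integer $a$, or to the single exceptional curve $E_{C_3}(24,1)$; correspondingly $\widetilde{E}$ is $\Q$-isomorphic to $\widetilde{E}_{C_3^0}(a)$ or to $\widetilde{E}_{C_3}(24,1)$. Thus it suffices to decide, for each of these models, whether it carries a rational point of order $3$, and then to assemble the equivalence.

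First I would treat the main family. Completing the square in $\widetilde{E}_{C_3^0}(a)\colon y^2+ay=x^3-7a^2$ and rescaling puts it in the short form $y^2=x^3-432a^2$. For any curve $y^2=x^3+D$ the $3$-division polynomial factors as $3x(x^3+4D)$, so a rational $3$-torsion point exists precisely when either $D$ is a square, or $x^3=-4D$ has a rational root $x_0$ with $x_0^3+D$ a square. Here $D=-432a^2<0$ is never a square, whereas at a root of $x^3=-4D=1728a^2$ one gets $x_0^3+D=1296a^2=(36a)^2$, always a square; hence $\widetilde{E}_{C_3^0}(a)$ has a rational $3$-torsion point if and only if $1728a^2=12^3a^2$ is a perfect cube, i.e.\ if and only if $a$ is a cube. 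As $a$ is cubefree and positive, this happens exactly when $a=1$ (and then $(12,36)$ is the $3$-torsion point).

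Next I would handle the exceptional curve. A direct computation of the invariants of $\widetilde{E}_{C_3}(24,1)$ gives $c_4\neq0$; in fact $j(\widetilde{E}_{C_3}(24,1))=-12288000\neq0$, reflecting that quotienting $E_{C_3}(24,1)$ by its (``cube-type'') rational $3$-torsion point moves off the $j=0$ locus and lands on the CM curve of discriminant $-27$. It remains to confirm that this quotient has \emph{no} rational $3$-torsion point; I would verify this by exhibiting that the $3$-division polynomial of $\widetilde{E}_{C_3}(24,1)$ has no rational root yielding a rational point, equivalently by reading off the trivial torsion subgroup of the associated conductor-$27$ curve in the LMFDB. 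Combining the two families then yields the stated equivalence: $\widetilde{E}$ has a rational $3$-torsion point if and only if $a=1$ in the main family, i.e.\ if and only if $E\cong E_{C_3^0}(1)$.

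Finally, for the ``moreover'' clause I would exhibit the isomorphism through invariants. The curve $\widetilde{E}_{C_3^0}(1)\colon y^2+y=x^3-7$ has $c_4=0$ and $c_6=2^3\cdot3^6$, while $E_{C_3}(24,1)$ has $c_4=0$ and $c_6=2^{15}\cdot3^6$; since both have $j=0$ and the ratio of their $c_6$-invariants is $2^{12}=4^6\in(\Q^\times)^6$, the two are $\Q$-isomorphic (both being $\Q$-isomorphic to $y^2=x^3-432$). Computing the conductor ($27$) and comparing minimal models then identifies the common curve with the LMFDB label \href{https://www.lmfdb.org/EllipticCurve/Q/27/a/3}{27.a3}. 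I expect the only genuine obstacle to be the no-$3$-torsion claim for the exceptional quotient: because $\widetilde{E}_{C_3}(24,1)$ locally has a subgroup of order $3$, one has $3\mid\#\widetilde{E}_{C_3}(24,1)(\F_p)$ at every good prime $p$, so the naive reduction test cannot detect the absence of a global $3$-torsion point, and this step must instead be settled by the division-polynomial (or LMFDB/CM) analysis.
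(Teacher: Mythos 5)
Your proposal is correct and follows essentially the same route as the paper: reduce to the two families of Proposition~\ref{isomodels}(c), use the $3$-division polynomial of $\widetilde{E}_{C_3^0}(a)$ to force $a$ to be a cube (hence $a=1$ by cubefreeness), dispose of the exceptional curve $\widetilde{E}_{C_3}(24,1)$ by direct verification that it has no rational $3$-torsion, and then identify $\widetilde{E}_{C_3^0}(1)$ with $E_{C_3}(24,1) = $ 27.a3. The only cosmetic differences are that you run the division-polynomial analysis on the short model $y^2=x^3-432a^2$ rather than the paper's long model (where the paper instead checks irreducibility of $y^2+ay-7a^2$ for the $x=0$ case), and you certify the final $\mathbb{Q}$-isomorphism by noting the ratio of $c_6$-invariants is $2^{12}\in(\mathbb{Q}^\times)^6$ instead of exhibiting the paper's explicit isomorphism $\left[\tfrac{1}{4},3,3,4\right]$; both variants are valid.
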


\begin{proof}
By Proposition \ref{isomodels}, if $E$ has $j$-invariant $0$ and a $3$-torsion point $P$, then $E$ and $\widetilde{E}=E/\left\langle P\right\rangle $ are $\mathbb{Q}$-isomorphic to either $\left(  i\right)  $ $E_{C_{3}}(24,1)$ and $\widetilde{E}_{C_{3}}(24,1)$, respectively, or $\left(  ii\right)  $ $E_{C_{3}^{0}}(a)$ and $\widetilde{E}_{C_{3}^{0}}(a)$, respectively, for some positive cubefree integer $a$. It is verified that the torsion subgroup of $\widetilde{E}_{C_{3}}(24,1)$ is trivial. So suppose that $E$ is $\mathbb{Q}$-isomorphic to $E_{C_{3}^{0}}(a)$. The $3$-division polynomial of $\widetilde{E}_{C_{3}^{0}}(a)$ shows that the $x$-coordinates of the $3$-torsion points of $\widetilde{E}_{C_{3}^{0}}(a)$ satisfy%
\[
x(x^{3}-27a^{2})=0.
\]
If $x=0$, then the $y$-coordinates are roots of $y^{2}+ay-7a^{2}$, which is irreducible over $\mathbb{Q}$ for all values of $a$. The polynomial $x^{3}-27a^{2}$ has an integral root if and only if $a$ is a cube. Since $a$ is assumed to be cube-free, we are reduced to the case $a=1$, in which we verify that the torsion subgroup of $\widetilde{E}_{C_{3}^{0}}(1)$ is $\left\{  \infty,\left(  3,4\right) ,\left(  3,-5\right)  \right\}  $. Lastly, there is a $\mathbb{Q}$-isomorphism from $\widetilde{E}_{C_{3}^{0}}(1)$ to $E_{C_{3}}(24,1)$ given by $\left[  \frac{1}{4},3,3,4\right]  $. Moreover, this elliptic curve with LMFDB label~\href{https://www.lmfdb.org/EllipticCurve/Q/27/a/3}{27.a3}.

Next, we consider the more general case of an elliptic curve with $3$-torsion, whose $j$-invariant is non-zero. Similar to the $j$-invariant $0$ case, we begin by considering those elliptic curves whose $3$-isogenous elliptic curve also has a $3$-torsion point.
\end{proof}

\begin{prop} \label{3torsprop}
Let $E/\mathbb{Q}$ be an elliptic curve with non-zero $j$-invariant and a $3$-torsion point $P$. If $\widetilde{E}=E/\left\langle P\right\rangle $ has a $3$-torsion point, then there are relatively prime integers $a,b$ that are cubes with $a>0$ such that $E$ and $\widetilde{E}$ are $\mathbb{Q}$-isomorphic to $E_{C_{3}}(a,b)$ and $\widetilde{E}_{C_{3}}(a,b)$, respectively.
\end{prop}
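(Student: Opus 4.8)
The plan is to mirror the $j$-invariant $0$ computation carried out in Proposition~\ref{j0-3-torsiso}, now for the two-parameter family. By Proposition~\ref{isomodels}(b), since $E$ has nonzero $j$-invariant and a $3$-torsion point $P$, there exist relatively prime integers $a,b$ with $a>0$ such that $E$ and $\widetilde E$ are $\Q$-isomorphic to $E_{C_3}(a,b)$ and $\widetilde E_{C_3}(a,b)$. Fixing such a pair, it suffices to prove that if $\widetilde E_{C_3}(a,b)$ has a rational $3$-torsion point, then both $a$ and $b$ are perfect cubes. First I would read off the Weierstrass coefficients of $\widetilde E_{C_3}(a,b)$ from Tables~\ref{ta:ETmodel} and~\ref{ta:ETtilmodel}, compute the invariants $b_2,b_4,b_6,b_8$, and form the $3$-division polynomial $\psi_3 = 3x^4 + b_2x^3 + 3b_4x^2 + 3b_6x + b_8$, whose roots are the $x$-coordinates of the nontrivial $3$-torsion points.

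The structural step is the factorization
\[
\psi_3 = (3x + a^2)\bigl(x^3 - 9a^3b\,x - a^4b(a+27b)\bigr),
\]
which I would verify by direct polynomial division. The linear factor records the kernel $C$ of the dual isogeny $\widetilde E \to E$: this is a rational cyclic subgroup of order $3$, but its points are not rational, since substituting $x=-a^2/3$ into $\widetilde E_{C_3}(a,b)$ yields a quadratic in $y$ whose discriminant is $-3$ times a rational square, hence never a square in $\Q$. Therefore any rational $3$-torsion point of $\widetilde E$ must have $x$-coordinate a root of the cubic $g(x) = x^3 - 9a^3b\,x - a^4b(a+27b)$, and it suffices to decide when $g$ has a rational root. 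Here I would record the identity $\disc(g) = -27a^8b^2(a-27b)^2 = -3\bigl(3a^4b(a-27b)\bigr)^2$; since $E$ is nonsingular we have $ab(a-27b)\neq 0$, so $\disc(g)<0$ and $g$ has a unique real root. Thus $g$ has a rational root if and only if this real root is rational, and moreover the two non-real roots lie in $\Q(\sqrt{-3})$, consistent with the expected split Cartan structure.

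To identify the real root I would apply Cardano's formula; using the identity for $\disc(g)$ above, the resolvent simplifies and the real root equals
\[
\sqrt[3]{a^5b} + \sqrt[3]{27\,a^4b^2} = a\bigl(\sqrt[3]{a^2b} + 3\sqrt[3]{ab^2}\bigr),
\]
which one checks directly since the product of the two radicands is $3a^3b$ and the sum of their cubes is $a^4b(a+27b)$. The heart of the argument is the rationality analysis. Writing $u = \sqrt[3]{a^2b}$, one has $\sqrt[3]{ab^2} = u^2/a$, so the real root $s$ satisfies $s = au + 3u^2$; rationality of $s$ is therefore equivalent to the nontrivial quadratic relation $3u^2 + au - s = 0$ over $\Q$. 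Because $X^3 - a^2b$ is irreducible over $\Q$ precisely when $a^2b$ is not a perfect cube, in that case $u$ would generate a cubic field and could not satisfy a nontrivial quadratic over $\Q$; hence $a^2b$ must be a perfect cube. Finally, since $\gcd(a,b)=1$, the condition that $a^2b$ is a cube forces $a$ and $b$ to be cubes individually: for each prime $p\mid a$ one has $2v_p(a)\equiv 0 \pmod 3$, so $v_p(a)\equiv 0 \pmod 3$, and likewise $v_p(b)\equiv 0 \pmod 3$ for $p\mid b$.

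I expect the main obstacle to be the two rationality arguments rather than the algebra: first, cleanly ruling out the linear factor by showing $C$ carries no rational point, and second, the field-degree argument equating rationality of the Cardano root with $a^2b$ being a perfect cube. The factorization of $\psi_3$ and the discriminant identity are routine but should be recorded precisely, ideally with the computational accompaniment already cited in the proof of Theorem~\ref{thmontama}.
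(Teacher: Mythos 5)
Your proposal is correct and takes essentially the same route as the paper: factor the $3$-division polynomial of $\widetilde{E}_{C_3}(a,b)$ as $(3x+a^2)$ times a cubic, rule out the linear factor because the resulting quadratic in $y$ has non-square discriminant, identify the unique real root of the cubic via Cardano, and use coprimality of $a$ and $b$ to force both to be cubes. One point in your favor: your constant term $-a^4b(a+27b)$ is the correct one — the paper's display \eqref{cub} reads $a-27b$, which is a sign typo, since the paper's own values of $D(a,b)$, $H(a,b)$, $K(a,b)$ and its root $s^4t(s+3t)$ after substituting $a=s^3$, $b=t^3$ are consistent only with $a+27b$. The only content of the paper's proof you omit is the closing verification that, conversely, when $a=s^3$ and $b=t^3$ the point $\left(s^4t(s+3t),4s^6t^3\right)$ is a rational $3$-torsion point; this converse is not required by the statement as written, but the paper uses it later (in the proof of Theorem~\ref{LoreExte}), so it is worth retaining.
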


\begin{proof}
Let $T=C_{3}$. By Proposition \ref{isomodels}, there are relatively prime integers $a$ and $b$ with $a>0$ such that $E$ and $\widetilde{E}$ are $\mathbb{Q}$-isomorphic to $E_{T}(a,b)$ and $\widetilde{E}_{T}(a,b)$, respectively. It remains to show that $a,b$ are cubes. To this end, we start by computing the 3-division polynomial of $\widetilde{E}_{T}(a,b)$.  A calculation reveals that the $x$-coordinates of the non-trivial 3-torsion points are roots of the quartic
\[
(3x + a^2)(x^3 - 9a^3bx - a^4b(a-27b)).
\]
If $x=-a^2/3$, then the $y$-coordinates are the roots of the quadratic
\[
y^2 + ((-1/3)a^3 + ba^2)y + ((1/27)a^6 - (2/3)ba^5 + 7b^2a^4),
\]
which has discriminant $(-1/3) \left(a^2(a-27b)/3\right)^2 \not \in \Q^{\times 2}$.  Therefore, $\widetilde{E}_{T}(a,b)$ will have a point of order 3 only if 
\begin{align} \label{cub}
x^3 - 9a^3bx - a^4b(a-27b)
\end{align}
has a rational root. We now appeal to known  results that tell us when a cubic has a rational root.

Following \cite{schulz}, we set $p(a,b) =  - 9a^3b$, $q(a,b) = - a^4b(a-27b)$, and 
\begin{align*}
D(a,b) &= \left(\frac{p(a,b)}{3}\right)^3 + \left(\frac{q(a,b)}{2} \right)^2 = \left(\frac{ba^4(a-27b)}{2}\right)^2
\end{align*}
and choose $\sqrt{D(a,b)} = \frac{ba^4(a-27b)}{2}$.  Set
\begin{align*}
H(a,b) &= \sqrt[3]{-q(a,b)/2 +\sqrt{D(a,b)}} = a \sqrt[3]{ba^2} \\
K(a,b) &=\sqrt[3]{-q(a,b)/2 -\sqrt{D(a,b)}} = 3a\sqrt[3]{b^2a}.
\end{align*}

With this notation in place, the roots of the polynomial (\ref{cub}) are 
\begin{align*}
x_1(a,b) &\ddef H(a,b) + K(a,b) \\
x_2(a,b) &\ddef \omega H(a,b) + \omega^2 K(a,b) \\
x_3(a,b) &\ddef \omega^2H(a,b) + \omega K(a,b),
\end{align*}
where $\omega$ is a primitive cube root of unity \cite[(2)]{schulz}.  Then $(\ref{cub})$ has a rational root if and only if $x_1(a,b)$ is rational.  Observe that $H(a,b)$ is rational if and only if $K(a,b)$ is rational if and only if their sum is rational, which is true if and only if \emph{both} $a$ and $b$ are rational cubes, since they are coprime. 

Set $a=s^3$ and $b=t^3$ so that 
\[
x_1(a,b) = s^4t(s+3t).
\]
Substituting this value into $\widetilde{E}_T(s^3,t^3)$ yields
\begin{align*}
(y - 4s^6t^3)(y + s^8t + 3s^7t^2 + 5s^6t^3)=0.
\end{align*}
We conclude from this that the point $(s^4t(s+3t),4s^6t^3)$ is a rational point of order 3.
\end{proof}

We can now use this information to show that there are infinitely many specializations of $\widetilde{E}_{T}(a,b)$ that have Tamagawa number divisible by 3.

\begin{prop}
\label{PropTama3}There are infinitely many elliptic curves $E/\mathbb{Q}$ with non-zero $j$-invariant and a $3$-torsion point $P$ such that one of the
following holds:

\begin{enumerate}
\item The global Tamagawa number $\widetilde{c}$ of $\widetilde{E}=E/\left\langle P\right\rangle $ is divisible by $3$;

\item The global Tamagawa number $\widetilde{c}$ of $\widetilde{E}=E/\left\langle P\right\rangle $ is $1$.
\end{enumerate}
\end{prop}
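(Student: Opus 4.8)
The plan is to reduce both assertions to the parametric family and then read the local Tamagawa numbers straight off Theorem~\ref{thmontama}. By Proposition~\ref{isomodels}(b), an elliptic curve $E/\Q$ with nonzero $j$-invariant and a $3$-torsion point is $\Q$-isomorphic to $E_{C_3}(a,b)$ for coprime integers $a,b$ with $a>0$, and $\widetilde E$ to $\widetilde E_{C_3}(a,b)$; so it suffices to exhibit two infinite families of admissible pairs $(a,b)$. Scanning the rows for $T=C_3$ in Table~\ref{ta:Tamagawanumbers}, one sees that the only two mechanisms forcing $3\mid\widetilde c$ are the multiplicative primes $p\equiv 1\pmod 6$ dividing $a-27b$ (there $\widetilde c_p=3\,v_p(a-27b)$) and the additive primes $p\equiv 1\pmod 6$ with $v_p(a)\not\equiv 0\pmod 3$ (there $\widetilde c_p=3$). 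The strategy is therefore to \emph{create} such a prime for part~(1) and to \emph{destroy both mechanisms at once} for part~(2).

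For part~(1) I would fix $b=1$ and take $a=27+7k$ for $k\ge 1$. Then $\gcd(a,b)=1$, $a>0$, and $7\mid a-27b$ with $7\equiv 1\pmod 6$, so Table~\ref{ta:Tamagawanumbers} gives $\widetilde c_7=3\,v_7(a-27b)\equiv 0\pmod 3$, whence $3\mid\widetilde c$. These curves have pairwise distinct $j$-invariants (as $j$ is a nonconstant rational function of $a$ when $b=1$) and none has $j=0$ (which would force $a=24$, impossible in this progression), so they give infinitely many isomorphism classes.

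For part~(2) the idea is to arrange that \emph{every} bad prime of $\widetilde E$ lands in the benign multiplicative row, where $\widetilde c_p=v_p(b)$. I would take $a=c^3$ a perfect cube and force $a-27b=1$, i.e.\ $b=(c^3-1)/27$; this requires $c\equiv 10\pmod{27}$, which guarantees $27\mid c^3-1$ and, since then $v_3(c^3-1)=3$ exactly, also $3\nmid b$. Now $a-27b=1$ is a unit, so the multiplicative row attached to $a-27b$ never triggers (killing the first mechanism), while $v_p(a)=3v_p(c)\equiv 0\pmod 3$ for every $p\mid a$, so the additive row never triggers either (killing the second). One then checks directly from Table~\ref{ta:Tamagawanumbers} that $3$ is a prime of good reduction, that every prime dividing $c$ is likewise a prime of good reduction, and that the only surviving bad primes divide $b$, each contributing the benign factor $\widetilde c_p=v_p(b)$. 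Hence $\widetilde c=1$ \emph{exactly} when $b$ is squarefree, with $\gcd(a,b)=1$ and $j\ne 0$ holding automatically.

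The hard part, and the technical heart of part~(2), will be to show that $b=(c^3-1)/27$ is squarefree for infinitely many admissible $c$. Writing $c=10+27t$ and $c^3-1=(c-1)(c^2+c+1)$, I get the factorization
\[
b=(1+3t)\,(243t^2+189t+37)
\]
into two integers that are coprime (their gcd divides $\gcd(c-1,c^2+c+1)\mid 3$, and both factors are prime to $3$), so $b$ is squarefree if and only if both factors are. I would finish with a union bound: the linear factor $1+3t$ is squarefree for a proportion $\tfrac{27}{4\pi^2}\approx 0.68$ of $t$, while the irreducible quadratic $243t^2+189t+37$, whose discriminant is $-243$ so that it is divisible only by primes $p\equiv 1\pmod 3$, is squarefree for a proportion of $t$ close to $1$ by the classical theorem on squarefree values of quadratic polynomials. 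Since these two densities sum to more than $1$, the set of $t$ for which both factors are squarefree has positive density, yielding infinitely many curves with $\widetilde c=1$. I expect the delicate points to be precisely the bookkeeping at $p=3$ and the verification that the two squarefree densities genuinely exceed $1$ in total; everything else is a direct application of Theorem~\ref{thmontama}.
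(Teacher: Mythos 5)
Your proposal is correct, and in both halves it takes a genuinely different route from the paper's proof, though both ultimately amount to reading rows of Table~\ref{ta:Tamagawanumbers}. For part (1), the paper specializes to $\widetilde{E}_{C_3}(2,p^3)$ for odd primes $p$, so the split multiplicative row $n=v_p(b)>0$ gives $\widetilde{c}_p=n=3$; you keep $b=1$ and instead manufacture the prime $7\equiv 1\pmod{6}$ dividing $a-27b$, where the table gives $\widetilde{c}_7=3\,v_7(a-27b)$. For part (2), the paper takes $a=27(3k+1)^3$ and $b=(3k+1)^3-9$, so that $a-27b=3^5$ and the row $v_3(a-27b)=5$ yields $\widetilde{c}_3=1$, and then quotes Erd\H{o}s's theorem on squarefree values of cubic polynomials once to make $b$ squarefree. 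Your normalization $a-27b=1$ with $a=c^3$, $c\equiv 10\pmod{27}$, is in one respect cleaner: it makes $3$ and every prime dividing $c$ a prime of good reduction, so the delicate $p=3$ rows of the table are bypassed entirely and only the benign row $\widetilde{c}_p=v_p(b)$ survives. The cost is that squarefreeness of $b=(1+3t)(243t^2+189t+37)$ must be assembled from two classical degree-$\le 2$ results (the squarefree density $\tfrac{27}{4\pi^2}$ in the progression $1\pmod{3}$, and Estermann's theorem for the irreducible quadratic factor) glued by a union bound; that bound does close comfortably, since the quadratic has discriminant $-243$, so only primes $p\equiv 1\pmod{3}$, hence $p\ge 7$, can divide its values, making its squarefree density at least $1-2\sum_{n\ge 7}n^{-2}>\tfrac{2}{3}>1-\tfrac{27}{4\pi^2}$. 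So you trade the paper's single deeper input (Erd\H{o}s for cubics) for easier inputs plus a density computation; both are legitimate. Two harmless imprecisions to repair in a final write-up: the congruence $c^3\equiv 1\pmod{27}$ holds for $c\equiv 1,10,19\pmod{27}$, not only $c\equiv 10$ (restricting to one class is of course permissible); and your opening claim that only two table rows can force $3\mid\widetilde{c}$ is false as stated (for instance the $p=3$ rows with $\widetilde{c}_3=3$, or the row $\widetilde{c}_p=v_p(b)$ when $3\mid v_p(b)$), but nothing in your argument relies on it, since part (2) is verified by checking every row of the table directly.
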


\begin{proof}
Let $T=C_{3}$, and observe that for each odd prime $p$, Theorem~\ref{thmontama} implies that $\operatorname*{typ}_{p}(\widetilde{E}_{T}(2,p^{3}))=\rm{I}_{3}$ with $c_{p}=3$. In particular, there are infinitely many elliptic curves with global Tamagawa number divisible by $3$ that are $3$-isogenous to an elliptic curve with a $3$-torsion point.

We now show that there are infinitely many elliptic curves with global Tamagawa number~$1$ that are $3$-isogenous to an elliptic curve with a $3$-torsion point. To this end, let $f(x)=(3x+1)^{3}-9$. By \cite{Erdos1953}, the set
\[
S=\left\{  x\in\mathbb{Z}\mid f(x)\text{ is squarefree}\right\}
\]
is infinite. Let $k\in S$, so that $(3k+1)^{3}-9=b$ is squarefree. Observe that $\gcd((3k+1),b)=1$ and $\gcd(3,b)=1$. Consequently, $\gcd(27(3k+1)^{3},b)=1$. Now consider the elliptic curve $\widetilde{E}_{T}=\widetilde{E}_{T}(27(3k+1)^{3},b)$. Since $27(3k+1)^{3}$ is a cube, we have by Theorem~\ref{thmontama} that the global Tamagawa number $\widetilde{c}$ of $\widetilde{E}_{T}$ is
\[
\widetilde{c}=\widetilde{c}_{3}\prod_{p|b}v_{p}(b)=\widetilde{c}_{3},
\]
since $b$ is squarefree. By construction, $27(3k+1)^{3}-27b=3^{5}$. It follows from Theorem~\ref{thmontama} that $\widetilde{c}_{3}=1$, which shows that
$\widetilde{c}=1$.
\end{proof}

Finally, we turn to the case of split Cartan subgroups. 

\begin{prop}\label{split3}
Let $E/\mathbb{Q}$ be an elliptic curve with a $3$-torsion point $P$ such that $\widetilde{E}=E/\left\langle P\right\rangle $ has a $3$-torsion point. Then, with the exception of when $E$ is $\mathbb{Q}$-isomorphic to the elliptic curve \href{https://www.lmfdb.org/EllipticCurve/Q/27/a/2}{27.a2}, it is the case that the global Tamagawa number of $\widetilde{E}$ is divisible by $3$.
\end{prop}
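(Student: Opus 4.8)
The plan is to combine Propositions~\ref{isomodels}, \ref{j0-3-torsiso}, and~\ref{3torsprop} to reduce the hypothesis to two explicit parametric families, and then to read off the $3$-divisibility of $\widetilde{c}$ directly from Theorem~\ref{thmontama}. By Proposition~\ref{isomodels}, the assumption that both $E$ and $\widetilde{E}$ carry a $3$-torsion point forces either $j(E)=0$ or $j(E)\neq 0$. In the former case Proposition~\ref{j0-3-torsiso} gives $\widetilde{E}\cong\widetilde{E}_{C_{3}^{0}}(1)\cong E_{C_{3}}(24,1)$, which is 27.a3; since here $a=1\equiv 1\pmod 9$ and $1$ has no prime divisors, Theorem~\ref{thmontama} yields $\widetilde{c}=\widetilde{c}_{3}=3$. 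Thus the conclusion holds and no exception arises in the $j$-invariant $0$ case (and $E_{C_{3}^{0}}(1)$ has $j$-invariant $0$, so it is not 27.a2).

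So I would then assume $j(E)\neq 0$. By Proposition~\ref{3torsprop} I may write $E\cong E_{C_{3}}(a,b)$ with $a=s^{3}$ and $b=t^{3}$ coprime cubes and $a>0$. The goal is to exhibit a single prime $p$ at which $3\mid\widetilde{c}_{p}$. First, if $|t|\geq 2$, I pick any prime $p\mid t$; then $p\mid b$, and Theorem~\ref{thmontama} gives $\widetilde{c}_{p}=v_{p}(b)=3\,v_{p}(t)$, which is divisible by $3$. This leaves $b=t^{3}=\pm 1$, and here I would analyze $a-27b=s^{3}\mp 27=(s\mp 3)(s^{2}\pm 3s+9)$. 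The key number-theoretic input is that the (odd) quadratic factor $s^{2}\pm 3s+9=\tfrac14\bigl((2s\pm 3)^{2}+27\bigr)$ has every prime divisor $p\neq 3$ satisfying $\left(\frac{-3}{p}\right)=1$, i.e.\ $p\equiv 1\pmod 6$. Whenever such a prime exists, it divides $a-27b$ with $p\nmid a$ (otherwise $p\mid 27b$ forces $p=3$), so Theorem~\ref{thmontama} gives $\widetilde{c}_{p}=3\,v_{p}(a-27b)$, again divisible by $3$.

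Consequently a potential exception can occur only when $a-27b$ has no prime factor congruent to $1\pmod 6$, which forces the quadratic factor $s^{2}\pm 3s+9$ to be a power of $3$. Writing $s=3m$ (a power of $3$ requires $3\mid s$) reduces this to the elementary exponential equation $m^{2}\mp m+1=3^{j}$, which I would resolve by noting that $m^{2}\mp m+1\equiv 3\pmod 9$ whenever it is divisible by $3$; hence $j\leq 1$ and $m\in\{0,1,2\}$. Discarding $s=0$ (which violates $a>0$) and, for $b=1$, the value $s=3$ (where $a-27b=0$ is singular), the only surviving parameters are $(a,b)=(27,-1)$ and $(a,b)=(216,-1)$. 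For each I would compute $\widetilde{c}_{3}$ directly from Theorem~\ref{thmontama}: the pair $(27,-1)$ has $v_{3}(a)=v_{3}(a-27b)=3$ and $\bigl(\frac{-b(4a+27b)/81}{3}\bigr)=\left(\frac{1}{3}\right)=1$, giving $\widetilde{c}_{3}=3$, so this curve is \emph{not} an exception; whereas $(216,-1)$ has $v_{3}(a-27b)=v_{3}(243)=5$ and good reduction away from $3$, giving $\widetilde{c}=\widetilde{c}_{3}=1$. Identifying $E_{C_{3}}(216,-1)$ with 27.a2 (its conductor is $3^{3}$ by the conductor-exponent computation in the $v_{3}(a-27b)=5$ case) then exhibits it as the unique exception.

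The main obstacle is the $b=\pm 1$ analysis: the entire result hinges on controlling the prime factorization of $a-27b$, and the crux is recognizing that the norm-form factor $s^{2}\pm 3s+9$ can only contribute primes $\equiv 1\pmod 6$, together with the clean resolution of $m^{2}\mp m+1=3^{j}$. After that the residual work is purely local bookkeeping at $p=3$, namely distinguishing the genuine exception $(216,-1)$ from the near-miss $(27,-1)$ via the Legendre-symbol condition versus the valuation $v_{3}(a-27b)$ in Theorem~\ref{thmontama}.
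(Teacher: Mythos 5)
Your proof is correct, and it takes a genuinely different route through the key step than the paper does. Both arguments share the same skeleton: dispose of $j(E)=0$ via Proposition~\ref{j0-3-torsiso}, invoke Proposition~\ref{3torsprop} to write $a=s^3$, $b=t^3$, kill the case $|b|>1$ with the $v_p(b)>0$ row of Table~\ref{ta:Tamagawanumbers}, and end with the same exponential equation and the same two surviving pairs $(27,-1)$ and $(216,-1)$. The difference is the mechanism for $b=\pm1$. The paper constructs an explicit $\mathbb{Q}$-isomorphism $\widetilde{E}_{T}(c^{3},b^{3})\cong E_{T}(A,B)$ with $B=b(c^{2}+3cb+9b^{2})/g$, proves $g=\gcd(A',B')$ is a power of $3$, and then applies the split-multiplicative ``$v_q(b)>0$'' row to the \emph{re-parametrized} curve $E_T(A,B)$; the isomorphism itself is verified computationally. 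You instead stay in the $\widetilde{E}_{T}(a,b)$ coordinates, factor $a-27b=(s\mp3)(s^{2}\pm3s+9)$, and observe that every prime $q\neq 3$ dividing $s^{2}\pm3s+9=\tfrac14\bigl((2s\pm3)^{2}+27\bigr)$ satisfies $\bigl(\tfrac{-3}{q}\bigr)=1$, so the ``$v_q(a-27b)>0$, $q\equiv1\pmod 6$'' row gives $3\mid\widetilde{c}_{q}$ directly. Amusingly, the congruence fact you use as the engine is exactly what the paper records \emph{afterwards} as a corollary of its proof; you have inverted that logic. What each buys: the paper's isomorphism also feeds its subsequent Remark on triple $3$-isogenies and yields the corollary for free, while your argument is more self-contained (no Vélu-based isomorphism, no gcd lemma, no computational verification beyond Table~\ref{ta:Tamagawanumbers}).

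Two small points to tighten, neither a gap. First, when you read off $\widetilde{c}_{3}=3$ for $(a,b)=(27,-1)$ from the Legendre-symbol row, you should note that the auxiliary hypothesis of that row holds: with $c=3$, $d=e=1$, one has $bd^{2}e^{3}(b^{3}d^{2}e^{5}-c)=4\not\equiv7\pmod 9$. Second, conductor $27$ alone does not single out 27.a2, since 27.a3 and 27.a4 also have conductor $27$ and a $3$-torsion point; you should add that $E_{C_{3}}(216,-1)$ has $c_{4}=216^{3}\cdot 240\neq0$, hence $j\neq0$, which pins it down as 27.a2 (the paper simply asserts the label).
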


\begin{proof}
First, suppose that $j(E)=0$. By Proposition \ref{j0-3-torsiso}, $E$ and $\widetilde{E}$ are $\mathbb{Q}$-isomorphic to $E_{C_{3}^{0}}(1)$ and $\widetilde{E}_{C_{3}^{0}}(1)$, respectively. The LMFDB labels of these elliptic curves are \href{https://www.lmfdb.org/EllipticCurve/Q/27/a/4}{27.a4} and \href{https://www.lmfdb.org/EllipticCurve/Q/27/a/3}{27.a3}, respectively. The global Tamagawa number of these elliptic curves are $1$ and $3$, respectively. Consequently, the claim holds if $j(E)=0$.

Now suppose $j(E)\neq0$, and let $T=C_{3}$. By Proposition \ref{3torsprop}, there are relatively prime integers $a,b$ that are cubes with $a>0$ such that $E$ and $\widetilde{E}$ are $\mathbb{Q}$-isomorphic to $E_{T}(a,b)$ and $\widetilde{E}_{T}(a,b)$, respectively. Since $b$ is a cube, we have by Theorem \ref{thmontama} that for each prime $p$ dividing $b$, the local Tamagawa numbers $c_{p}$ and $\widetilde{c}_{p}$ are divisible by $3$. So it remains to consider the case when $b=\pm1$.

Since $a$ is a positive cube, there exists a positive integer $c$ such that $a=c^{3}$. Next, let $A^{\prime}=(c+6b)^{3}$ and $B^{\prime}=b(c^{2}+3cb+9b^{2})$. Let $g=\gcd(A^{\prime},B^{\prime})$, and set $A=\frac{A^{\prime}}{g}$ and $B=\frac{B^{\prime}}{g}$. Then, there is an isomorphism $\psi:\widetilde{E}_{T}(c^{3},b^{3})\rightarrow E_{T}(A,B)$ given by
\[
\psi=\left[  \frac{c^{2}g}{c^{2}+12cb+36b^{2}},c^{5}b+3c^{4}b^{2}%
,3c^{2}b,4c^{6}b^{3}\right]  .
\]
This isomorphism is verified in \cite[Proposition4\_7.ipynb]{GitHubIsog}. We claim that $g=3^{k}$ for some nonnegative integer $k$. Towards a contradiction, suppose $g\neq1$. Then there exists a prime $p$ such that $p|g$. Then $c+6b\equiv0\! \pmod{p}$. Since $b=\pm1$, we conclude that
\begin{align*}
c^{2}+3cb+9b^{2} &  =(c+6b)^{2}-9b(c+6b-3b)\\
&  \equiv27\! \pmod{p}.
\end{align*}
Thus, $p=3$. It suffices to show that the global Tamagawa number of $E_{T}(A,B)$ is divisible by $3$.

Now, suppose $q$ is a prime dividing $B$. By \cite[Theorem 2.2]{br2}, $E_{T}(A,B)$ has split multiplicative reduction at $q$ and $\widetilde{c}_{q}=3v_{p}(B)$. It follows that the global Tamagawa number $\widetilde{c}$ is divisible by $3$. It remains to consider the case when $B=\pm1$. Then $c^{2}+3cb+9b^{2}=\pm3^{k}$. If $v_{3}(c)=0$, it is easily checked that there are no integer solutions to the equation $c^{2}+3cb+9b^{2}=\pm1$. So suppose $v_{3}(c)>0$. Then $c=3l$ for some positive integer $l$. Then $l^{2}+lb+b^{2}=\pm3^{k-2}$. In fact, $k\in\left\{  2,3\right\}  $ since $l^{2}+lb+b^{2}\pmod{9}\ $is either $1,3,4,$ or $7$ modulo $9$. It is now checked that the solutions $\left(  l,b\right)  $ to $l^{2}+lb+b^{2}=\pm3^{k-2}$ for $k\in\left\{  2,3\right\}  $ are
\[
\left\{  \left(  -1,1\right)  ,\left(  0,1\right)  ,\left(  0,-1\right)
,\left(  1,-1\right)  ,\left(  -2,1\right)  ,\left(  -1,-1\right)  ,\left(
2,-1\right)  \right\}  .
\]
Since $c$ is positive, it suffices to consider $\left(  l,b\right) \in\left\{  \left(  1,-1\right)  ,\left(  2,-1\right)  \right\}  $. First, suppose $\left(  l,b\right)  =\left(  1,-1\right)  $. In this case, we obtain the elliptic curve with LMFDB label $54.a3$, which has global Tamagawa number $3$. Lastly, the solution $\left(  l,b\right)  =\left(  2,-1\right)  $ yields the elliptic curves $E$ and $\widetilde{E}$ given by LMFDB labels \href{https://www.lmfdb.org/EllipticCurve/Q/27/a/2}{27.a2} and \href{https://www.lmfdb.org/EllipticCurve/Q/27/a/4}{27.a4}, both of which have global Tamagawa number equal to $1$. This gives our one exception, which concludes the proof.
\end{proof}

The proof of Proposition~\ref{split3} leads to an alternate proof of the following curious fact that can be obtained via a quadratic reciprocity argument.

\begin{cor}
If $c$ is an integer such that $p\neq3$ is a prime dividing $c^{2}\pm3c+9$, then $p\equiv1\! \pmod{6}$.
\end{cor}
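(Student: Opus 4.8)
The plan is to reduce the claim to a single Legendre symbol evaluation. Note first that $c(c+3)$ and $c(c-3)$ are always even, so both $c^2+3c+9$ and $c^2-3c+9$ are odd; hence any prime divisor $p$ is automatically odd, and together with the hypothesis $p\neq 3$ it suffices to prove $\left(\frac{-3}{p}\right)=1$. Indeed $\left(\frac{-3}{p}\right)=1$ is equivalent to $p\equiv 1\pmod 3$, and an \emph{odd} prime with $p\equiv 1\pmod 3$ satisfies $p\equiv 1\pmod 6$. To produce the symbol I would complete the square: $4(c^2\pm 3c+9)=(2c\pm 3)^2+27$, so if $p\mid c^2\pm 3c+9$ then $(2c\pm 3)^2\equiv -27\pmod p$. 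If $p\mid 2c\pm 3$ this forces $p\mid 27$, contradicting $p\neq 3$, so $-27$ is a nonzero square mod $p$; since $-27=-3\cdot 3^2$ we get $\left(\frac{-3}{p}\right)=\left(\frac{-27}{p}\right)=1$. This is the quadratic reciprocity argument alluded to in the statement.

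Because the remark preceding the corollary promises an alternate proof extracted from Proposition~\ref{split3}, I would present that as the main argument. Writing $c^2\pm 3c+9=c^2+3cb+9b^2$ with $b=1$ (resp.\ $b=-1$), the key algebraic input is the factorization $c^3-27b^3=(c-3b)(c^2+3cb+9b^2)$; since $b^3=b$ when $b=\pm 1$, a prime $p\neq 3$ dividing $c^2\pm 3c+9$ divides $c^3-27b$. After replacing $c$ by $-c$ if necessary (which swaps the two sign choices), I may assume $c>0$, so that $E_{C_3}(c^3,b)$ is an admissible member of the family with $\gcd(c^3,b)=1$. Applying Subcase~2b of Theorem~\ref{thmontama} to $E_{C_3}(c^3,b)$ at the prime $p\mid c^3-27b$ shows that this curve has multiplicative reduction at $p$ that is \emph{split precisely when} $p\equiv 1\pmod 6$. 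On the other hand, the isomorphism $\psi\colon\widetilde{E}_{C_3}(c^3,b)\to E_{C_3}(A,B)$ constructed in the proof of Proposition~\ref{split3} exhibits, in the same isogeny class, a model whose second parameter $B$ is divisible by $p$ (since $B$ differs from $b(c^2+3cb+9b^2)$ only by a power of $3$, and $p\neq 3$). By Subcase~2a, any such model has \emph{split} multiplicative reduction at $p$.

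To close the loop I would invoke the fact that, at a prime of multiplicative reduction, whether the reduction is split or non-split is an isogeny invariant: the trace of Frobenius equals $+1$ in the split case and $-1$ in the non-split case, and this trace is preserved under $\Q$-isogeny. Since $E_{C_3}(A,B)$ is $\Q$-isogenous to $E_{C_3}(c^3,b)$ via the $3$-isogeny and the isomorphism $\psi$, the split reduction of $E_{C_3}(A,B)$ forces $E_{C_3}(c^3,b)$ to be split at $p$ as well; by the dichotomy established from Subcase~2b, this means $p\equiv 1\pmod 6$. The main obstacle is not any single computation but the bookkeeping that ensures the two parameterizations genuinely describe the same isogeny class at $p$ (and that the $p\mid B$ condition survives the normalization by the power of $3$); once the isogeny invariance of the split/non-split type is granted, the two table lookups do all the work, and the elementary completion-of-the-square computation of the first paragraph serves as a fully independent verification.
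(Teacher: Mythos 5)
Your proposal is correct, and its core argument is essentially the paper's own proof: you pass to $E_{C_3}(c^3,b)$ with $b=\pm1$, use the isomorphism $\psi:\widetilde{E}_{C_3}(c^3,b^3)\to E_{C_3}(A,B)$ from the proof of Proposition~\ref{split3} together with the fact that $g=\gcd(A',B')$ is a power of $3$ to conclude $p\mid B$, read off split multiplicative reduction at $p$ from the $v_p(b)>0$ row of Theorem~\ref{thmontama}, and then transfer this across the $3$-isogeny to $E_{C_3}(c^3,b)$, where $p\mid c^3-27b$ and the split/non-split dichotomy of the $v_p(a-27b)>0$ row forces $p\equiv1\pmod 6$. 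Two points of comparison are worth recording. First, you make explicit the isogeny-invariance of the split/non-split type (via $a_p=\pm1$ and invariance of the $L$-function under $\Q$-isogeny), which the paper leaves implicit in Theorem~\ref{thmontama}, whose table records the condition $p\equiv1\pmod 6$ simultaneously for $E_T$ and $\widetilde{E}_T$; this is a legitimate and cleaner way to justify that step, and it resolves the ``bookkeeping'' worry you raise at the end. Second, your opening paragraph supplies in full the elementary argument that the paper only alludes to as ``a quadratic reciprocity argument'': completing the square gives $4(c^2\pm3c+9)=(2c\pm3)^2+27$, so $-27$ (hence $-3$) is a nonzero square modulo $p$, giving $p\equiv1\pmod 3$, and your parity observation that $c^2\pm3c+9$ is odd upgrades this to $p\equiv1\pmod 6$. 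That elementary proof is complete and independent of all the isogeny machinery, so either of your two arguments suffices on its own; the paper's choice of the machinery-based route is purely to exhibit the corollary as a by-product of Proposition~\ref{split3}.
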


\begin{proof}
Let $b=\pm1$, so that $c^{2}\pm3c+9=c^{2}+3b+9$. Let $A$ and $B$ be as defined in the proof of Proposition \ref{split3}. By assumption, $p$ divides $B$. By \cite[Theorem 2.2]{br2}, $E_{T}$ has split multiplicative reduction at $p$. By the proof of Proposition \ref{split3}, $E_{T}(A,B)$ is $\mathbb{Q}$-isomorphic to $\widetilde{E}_{T}(c^{3},b^{3})$. Observe that $p$ divides $c^{3}-27b$. Since $\widetilde{E}_{T}(c^{3},b^{3})$ has split multiplicative reduction at $p$, it follows from Theorem~\ref{thmontama} that $p\equiv 1\! \pmod{6}$.
\end{proof}

\begin{rmk}
Suppose that $E_{1},E_{2},$ and $E_{3}$ are rational elliptic curves such that $E_{i}$ has a $3$-torsion point $P_{i}$. Suppose further that $E_{2}\cong E_{1}/\left\langle P_{1}\right\rangle $ and $E_{3}\cong E_{2}/\left\langle P_{2}\right\rangle $. Necessarily, the isogeny class degree of $E_{i}$ is divisible by $27$, and it is shown in \cite[Section 7]{MR4203041} that the only isogeny class exhibiting this over $\mathbb{Q}$ is \href{https://www.lmfdb.org/EllipticCurve/Q/27/a}{27.a}. We recover this fact as a consequence of the proof of Proposition \ref{split3}. Indeed, by the proof, we have that there exists an integer $c$ such that $E_{1}$ and $E_{2}$ are $\mathbb{Q}$-isomorphic to $E_{T}(c^{3},\pm1)$ and $\widetilde{E}_{T}(c^{3},\pm1)$, respectively. With $A$ and $B$ as defined in the proof, we have that $E_{2}$ is $\mathbb{Q}$-isomorphic to $E_{T}(A,B)$. By assumption, $E_{3}$ is $\mathbb{Q}$-isomorphic to $\widetilde{E}_{T}(A,B)$. By Proposition \ref{3torsprop}, $A$ and $B$ must be cubes since $E_{3}$ has a $3$-torsion point. It follows that $c^{2}\pm3c+9=k^{3}$ for some $k\in\mathbb{Z}$. In particular, $\left(  k,c\right)  $ is an integral solution for the elliptic curve $y^{2}\pm3y=x^{3}-9$. In both cases, this elliptic curve is $\mathbb{Q}$-isomorphic to \href{https://www.lmfdb.org/EllipticCurve/Q/27/a/3}{27.a3}. The integral solutions for $y^{2}\pm3y=x^{3}-9$ are $\left\{  \left(  3,\pm3\right)  ,\left(  3,\mp6\right)  \right\}  $. The solutions $\left(  3,3\right)  $ and $\left(  3,-3\right)  $ yield the singular curves $E_{T}(27,1)$ and $E_{T}(-27,-1)$, respectively. Whereas the solutions $\left(  3,6\right)  $ and $\left(  3,-6\right)  $ result in the elliptic curves $E_{T}(-216,1)$ and $E_{T}(216,-1)$, respectively. These elliptic curves are $\mathbb{Q}$-isomorphic and correspond to the elliptic curve with LMFDB label \href{https://www.lmfdb.org/EllipticCurve/Q/27/a/2}{27.a2}. From this, we conclude that our assumptions uniquely determine $E_{1}$ to be the elliptic curve \href{https://www.lmfdb.org/EllipticCurve/Q/27/a/2}{27.a2}; thus, there is exactly one such isogeny class exhibiting this phenomenon over $\mathbb{Q}$.
\end{rmk}

\begin{thm}\label{LoreExte}
Let $E$ be an elliptic curve with a $3$-torsion point $P$. Then, the product of the global Tamagawa numbers of the elliptic curves in the isogeny class of $E$ is divisible by~$3$.
\end{thm}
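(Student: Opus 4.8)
The plan is to reduce the divisibility of the entire product to finding a \emph{single} member of the isogeny class whose global Tamagawa number is divisible by $3$. Since each global Tamagawa number is a positive integer and the product over the class is a multiple of every individual factor, it suffices to produce one curve $E'$ in the class with $3\mid c_{E'}$. Both $E$ and $\widetilde{E}=E/\langle P\rangle$ lie in the class, and by Proposition~\ref{isomodels} I may assume $E=E_T(a,b)$ and $\widetilde{E}=\widetilde{E}_T(a,b)$, where $T=C_3^0$ if $j(E)=0$ and $T=C_3$ if $j(E)\neq 0$. Because a global Tamagawa number is the product of local ones, exhibiting a single prime at which $c_p$ or $\widetilde{c}_p$ is divisible by $3$ will finish the relevant subcase; all such local values are read off directly from Table~\ref{ta:Tamagawanumbers}.

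The case $T=C_3^0$ (so $a$ is a positive cubefree integer) is immediate from Table~\ref{ta:Tamagawanumbers}. If $a>1$ then $a$ has a prime factor $p$, and at that prime $c_p=3$ regardless of whether $p=3$ or $p\neq 3$, so $3\mid c_E$. If $a=1$, then at $p=3$ we have $\widetilde{c}_3=3$, so $3\mid\widetilde{c}_E$. In either subcase a member of the class ($E$ or $\widetilde{E}$) has Tamagawa number divisible by $3$.

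For $T=C_3$ I peel off the ``easy'' configurations using Theorem~\ref{thmontama}. If $b$ has a prime factor $p$, then $c_p=3v_p(b)$ and hence $3\mid c_E$; so I may assume $b=\pm 1$. If some prime $p\neq 3$ has $v_p(a)\not\equiv 0\pmod 3$, then $c_p=3$ and $3\mid c_E$. If $v_3(a)\not\equiv 0\pmod 3$, then Table~\ref{ta:Tamagawanumbers} gives $c_3=3$, so again $3\mid c_E$. The only surviving configuration is that $v_p(a)\equiv 0\pmod 3$ for \emph{every} prime $p$ together with $b=\pm 1$; equivalently, $a$ and $b$ are both perfect cubes (and they are coprime).

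In this surviving configuration the construction in the proof of Proposition~\ref{3torsprop} produces a rational $3$-torsion point on $\widetilde{E}$, placing us exactly in the split-Cartan setting. Proposition~\ref{split3} then yields $3\mid\widetilde{c}_E$ unless $E$ is $\Q$-isomorphic to \href{https://www.lmfdb.org/EllipticCurve/Q/27/a/2}{27.a2}. In that single exceptional case the isogeny class is \href{https://www.lmfdb.org/EllipticCurve/Q/27/a}{27.a}, which contains \href{https://www.lmfdb.org/EllipticCurve/Q/27/a/3}{27.a3} with global Tamagawa number $3$, so the product over the class is divisible by $3$ anyway. The main conceptual point, and the step I expect to be the real obstacle, is recognizing that every genuinely delicate $p=3$ row of Table~\ref{ta:Tamagawanumbers}---precisely those giving $c_3\widetilde{c}_3$ coprime to $3$---forces $v_3(a)\equiv 0\pmod 3$, i.e. forces $a$ and $b$ to be cubes, so that these rows are absorbed into the split-Cartan analysis of Proposition~\ref{split3} and require no separate case-by-case treatment.
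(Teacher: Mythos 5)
Your proof follows the same route as the paper's: reduce to exhibiting a single curve in the class whose Tamagawa number is divisible by $3$, use Table~\ref{ta:Tamagawanumbers} to eliminate all parameters except $b=\pm 1$ with $a$ a perfect cube, pass through Proposition~\ref{3torsprop} to the split-Cartan setting, apply Proposition~\ref{split3}, and absorb the exceptional curve 27.a2 into the isogeny class 27.a. There is, however, one concrete gap at your very first reduction. Proposition~\ref{isomodels}(c) does \emph{not} say that $j(E)=0$ forces $E\cong E_{C_3^0}(a)$: it offers two alternatives, and the other one, $E\cong E_{C_3}(24,1)$ (the curve 27.a3), is a curve with $j$-invariant $0$ and a rational $3$-torsion point that is not $\Q$-isomorphic to any member of the $C_3^0$ family (one way to see this: $c_6$ of $E_{C_3^0}(a)$ equals $-216a^2<0$, while $c_6$ of $E_{C_3}(24,1)$ is positive, and the sign of $c_6$ is a $\Q$-isomorphism invariant). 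As written, your case analysis never reaches this curve: your $C_3^0$ branch cannot represent it, and your $C_3$ branch is declared only for $j(E)\neq 0$.

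The omission is easily repaired, which is why this is an oversight rather than a failed approach. For $E\cong E_{C_3}(24,1)$ one has $v_3(24)=1\not\equiv 0\pmod{3}$, so the row of Table~\ref{ta:Tamagawanumbers} for $v_3(a)\equiv 1\pmod{3}$ gives $c_3=3$, and the product over the class is divisible by $3$; this is exactly how the paper dispatches alternative (i), recording that $E_{C_3}(24,1)$ and $\widetilde{E}_{C_3}(24,1)$ have global Tamagawa numbers $3$ and $1$, respectively. Alternatively, the isogeny class of 27.a3 is 27.a, which your own handling of the split-Cartan exception already shows contains a curve with Tamagawa number $3$. Everything else in your argument --- the treatment of the $C_3^0$ family (any prime factor of $a$ gives $c_p=3$; if $a=1$ then $\widetilde{c}_3=3$) and the chain of reductions in the $C_3$ family ending in Propositions~\ref{3torsprop} and~\ref{split3} --- agrees with the paper's proof, with the $j=0$ subcase organized slightly more compactly than in the paper.
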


\begin{proof}
Set $\widetilde{E}=E/\left\langle P\right\rangle $. If $j(E)=0$, Proposition \ref{isomodels} implies that $E$ and $\widetilde{E}$ are $\mathbb{Q}$-isomorphic to either $\left(  i\right)  $ $E_{C_{3}}(24,1)$ and $\widetilde{E}_{C_{3}}(24,1)$, respectively, or $\left(  ii\right)  $ $E_{C_{3}^{0}}(a)$ and $\widetilde{E}_{C_{3}^{0}}(a)$, respectively, for some positive cubefree integer $a$. In the former case, we have that the theorem holds since the global Tamagawa numbers of $E_{C_{3}}(24,1)$ and $\widetilde{E}_{C_{3}}(24,1)$ are $3$ and $1$, respectively. So suppose that the latter case holds. By Theorem \ref{thmontama}, $3$ divides the global Tamagawa number of $E_{C_{3}^{0}}(a)$ whenever a prime $p\neq3$ divides $a$. So suppose that $a=3^{k}$ for some nonnegative integer $k$. If $k=0$, then the global Tamagawa number of $\widetilde{E}_{C_{3}^{0}}(a)$ is divisible by $3$. If $k$ is positive, then the global Tamagawa number of $E_{C_{3}^{0}}(a)$ is divisible by $3$. Thus, the theorem holds if $j(E)=0$.

Next, suppose that $j(E)\neq0$ and let $T=C_{3}$. By Proposition \ref{isomodels}, we have that $E$ and $\widetilde{E}$ are $\mathbb{Q}$-isomorphic to $E_{T}=E_{T}(a,b)$ and $\widetilde{E}_{T}=\widetilde{E}(a,b)$ for some relatively prime integers $a$ and $b$ with $a>0$. By Theorem \ref{thmontama}, if $\left\vert b\right\vert >1$, then the global Tamagawa number of $E_{T}$ is divisible by $3$. So we may assume that $b=\pm1$. Similarly, if $a$ is not a cube, then loc. cit. implies that the global Tamagawa number of $E_{T}$ is divisible by $3$.\ Consequently, we may assume that there is a positive integer $c$ such that $a=c^{3}$. Since $a$ and $b$ are cubes, we have that both $E_{T}$ and $\widetilde{E}_{T}$ have a $3$-torsion point by Proposition \ref{3torsprop}. By Proposition \ref{split3}, $\widetilde{E}_{T}$ has global Tamagawa number divisible by $3$, with the one exception of the elliptic curve \href{https://www.lmfdb.org/EllipticCurve/Q/27/a/4}{27.a4}. This elliptic curve is isogenous to the elliptic curve \href{https://www.lmfdb.org/EllipticCurve/Q/27/a/3}{27.a3}, which has global Tamagawa number equal to $3$.
\end{proof}

\section{Tamagawa Divisibility by  \texorpdfstring{$5$}{5} and  \texorpdfstring{$7$}{7}} \label{pthpowersection}

Let $\ell\in\left\{  5,7\right\}  $, $T=C_{\ell}$, and recall our usual conventions on $E_{T}$ and $\widetilde{E}_{T}$. In this section, we prove that there are infinitely many specializations of $\widetilde{E}_{T}$ such that its global Tamagawa number $\widetilde{c}_{T}$ is divisible by $\ell$. The strategy for both $5$ and $7$ is identical and constitutes the first result of this section. The case where the global Tamagawa number $\widetilde{c}_{T}$ is coprime to $\ell$ for infinitely many specializations is more difficult.

\begin{prop}
\label{Proppis57}Let $E/\mathbb{Q}$ be an elliptic curve with a $\ell$-torsion point $P$, where $\ell\in\left\{5,7\right\}  $. Then there are infinitely many elliptic curves $\widetilde{E}=E/\left\langle P\right\rangle $ such that its global Tamagawa number $\widetilde{c}$ is divisible by $\ell$.
\end{prop}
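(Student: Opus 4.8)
The plan is to reduce to the integral families of Proposition~\ref{isomodels} and then read off the conclusion from the multiplicative-reduction rows of Table~\ref{ta:Tamagawanumbers}. By Proposition~\ref{isomodels}(a), every $E/\mathbb{Q}$ with an $\ell$-torsion point $P$ is $\mathbb{Q}$-isomorphic to some $E_T(a,b)$ with $T = C_\ell$, $\gcd(a,b)=1$, and $a > 0$, and the corresponding $\widetilde{E} = E/\langle P\rangle$ to $\widetilde{E}_T(a,b)$. Hence it suffices to exhibit infinitely many coprime pairs $(a,b)$, yielding pairwise non-isomorphic curves $\widetilde{E}_T(a,b)$, for which $\ell \mid \widetilde{c}_T$.

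The key input is the first row of Table~\ref{ta:Tamagawanumbers} for each $T$: for $T = C_5$ it gives $\widetilde{c}_p = v_p(ab)$ at every prime $p \geq 2$ with $v_p(ab) > 0$, and for $T = C_7$ it gives $\widetilde{c}_p = v_p\bigl(ab(a-b)\bigr)$. Thus, forcing the relevant valuation at a single prime to be a multiple of $\ell$ makes that local factor, and therefore the global number $\widetilde{c}_T = \prod_q \widetilde{c}_q$, divisible by $\ell$. Concretely I would fix $a = 1$, so that coprimality is automatic, and let $b = q^\ell$ range over $\ell$-th powers of primes $q \neq \ell$. For such $q$ one has $v_q(ab) = \ell$ in the $C_5$ case; in the $C_7$ case $v_q(a - b) = v_q(1 - q^\ell) = 0$, so again $v_q\bigl(ab(a-b)\bigr) = v_q(b) = \ell$. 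In either case $\widetilde{c}_q = \ell$, whence $\ell \mid \widetilde{c}_T$.

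It then remains only to see that these specializations give infinitely many distinct isomorphism classes. By the same row, $E_T(1,q^\ell)$ has split multiplicative reduction at $q$, so its isogenous curve $\widetilde{E}_T(1,q^\ell)$ also has multiplicative reduction at $q$; consequently $v_q(j) < 0$, i.e.\ $q$ divides the denominator of the $j$-invariant. Distinct primes $q$ therefore produce $j$-invariants with distinct denominators, hence distinct $j$-invariants and pairwise non-isomorphic curves. Letting $q$ run over the infinitely many primes $\neq \ell$ completes the argument.

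I do not expect a genuine obstacle: once Theorem~\ref{thmontama} is available, the statement amounts to choosing the parameters so that one local factor of $\widetilde{c}_T$ absorbs a factor of $\ell$. The only points needing a word of care are that $E_T(1,q^\ell)$ is nonsingular for all but finitely many $q$ (so that we genuinely have elliptic curves) and that the chosen family lands in the multiplicative-reduction row rather than the additive rows at $p = \ell$ or the Legendre-symbol rows—both automatic, since the prime $q \neq \ell$ we exploit satisfies $v_q(ab) > 0$. The genuinely hard companion statement, namely producing infinitely many $\widetilde{E}_T$ with $\widetilde{c}_T$ coprime to $\ell$, is the one deferred to Section~\ref{conditional_proofs}.
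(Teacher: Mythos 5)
Your proposal is correct and takes essentially the same route as the paper: the paper's proof simply picks coprime positive integers $a,b$ with $q^{\ell}\mid ab$ for some prime $q$ and invokes Theorem~\ref{thmontama} (the multiplicative-reduction rows of Table~\ref{ta:Tamagawanumbers}) to make the local factor at $q$ divisible by $\ell$, exactly as your specialization $a=1$, $b=q^{\ell}$ does --- and your choice is in fact slightly more careful, since it forces $v_q(ab)=\ell$ exactly rather than merely $v_q(ab)\geq\ell$. One small repair to your distinctness step: from ``$q_1$ divides the denominator of $j_1$ and $q_2$ divides the denominator of $j_2$'' one cannot conclude that the denominators differ; instead, observe that any fixed $j$-invariant has only finitely many primes dividing its denominator, so each isomorphism class can arise from at most finitely many $q$, which still gives infinitely many distinct classes.
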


\begin{proof}
Let $a$ and $b$ be relatively prime positive integers such that there is a prime $q$ with $q^{\ell}|ab$. Now consider the elliptic curve $\widetilde{E}_{T}=\widetilde{E}_{T}(a,b)$ where $T=C_{\ell}$. By Theorem \ref{thmontama}, the local Tamagawa number at $q$ of $\widetilde{E}_{T}$ is divisible by $\ell$. The proposition now follows.
\end{proof}

Since our goal is to understand the $\ell$-divisibility of the elements of $\mathcal{L}_{\ell}\backslash\mathcal{G}_{\ell}$, we must consider the case in Proposition \ref{Proppis57} corresponding to $\widetilde{E}=E/\left\langle P\right\rangle $ having a $5$-torsion point. We note that when $\ell=7$, there is no issue since there are no elliptic curves with a $7$-torsion point whose mod~$7$ representation is a split Cartan subgroup of $\operatorname*{GL}\nolimits_{2}(\mathbb{F}_{7})$. 

By Lorenzini \cite{lorenzini}, there is exactly one elliptic curve with a $5$-torsion point that has global Tamagawa number not divisible by $5$. Consequently, if an isogeny class contains two elliptic curves with a $5$-torsion point, then it follows that with at most one exception, the product of the global Tamagawa numbers of the elliptic curves in said isogeny class is divisible by $25$. Below, we provide a proof of this consequence by means of the parameterizations $E_T$ and $\widetilde{E}_T$, as the proof informs some of the computations we discuss immediately after this result.

\begin{cor}\label{5cartan}
Let $E/\mathbb{Q}$ be an elliptic curve with a $5$-torsion point $P$. Suppose further that $\widetilde{E}=E/\left\langle P\right\rangle $ has a $5$-torsion point. Then, with the exception of the isogeny class with LMFDB label \href{https://www.lmfdb.org/EllipticCurve/Q/11/a/}{11.a}, the product of the global Tamagawa numbers of the elliptic curves in the isogeny class of $E$ is divisible by $25$.
\end{cor}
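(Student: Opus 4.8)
The plan is to run the same parameterization-and-table strategy used for $\ell=3$, now reading off the $C_5$ rows of Table~\ref{ta:Tamagawanumbers}. First I would invoke Proposition~\ref{isomodels}(a) to fix coprime integers $a,b$ with $a>0$ so that $E\cong E_{C_5}(a,b)$ and $\widetilde{E}\cong\widetilde{E}_{C_5}(a,b)$. The crucial structural input is the characterization (the split-Cartan / $5$th-power condition of this section) that $\widetilde{E}$ acquires a rational $5$-torsion point exactly when $ab$ is a perfect $5$th power; since $\gcd(a,b)=1$, this forces \emph{both} $a$ and $b$ to be $5$th powers.

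With that in hand, the main case is $ab\neq\pm1$. Then some prime $p\mid ab$ divides a nontrivial $5$th power, so $v_p(ab)\geq5$, and the first $C_5$ row of Table~\ref{ta:Tamagawanumbers} gives $c_{E,p}=5\,v_p(ab)\geq25$. Hence $25\mid c_E$, and since $c_E$ is one factor of the product of the global Tamagawa numbers over the isogeny class, that product is divisible by $25$. This is stronger than what Lorenzini alone yields: the $5$th-power hypothesis on the parameters is precisely what upgrades the lone factor of $5$ (coming from split multiplicative reduction at a prime dividing $ab$) to a factor of $25$, already from the single curve $E$.

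It remains to treat $ab=\pm1$, i.e.\ $a=1,\ b=\pm1$. Here the only bad prime is $11$, and the $C_5$ rows of Table~\ref{ta:Tamagawanumbers} give $c_{E,11}=1$ (the symbol $\left(\tfrac{-5(a^2+b^2)}{11}\right)$ equals $+1$), while $v_5(a+18b)=0$ yields good reduction at $5$; thus $c_E=1$. By \cite{lorenzini}, $X_1(11)$ is the unique elliptic curve with a $5$-torsion point whose Tamagawa number is prime to $5$, so $E\cong X_1(11)=$~11.a3 and the isogeny class is 11.a. Reading the same table for $\widetilde{E}_{C_5}(1,\pm1)$ gives $\widetilde{c}=5$ (this curve is $X_0(11)=$~11.a2), and the remaining curve 11.a1 has Tamagawa number $1$; the product over the class is therefore $1\cdot5\cdot1=5$, not divisible by $25$. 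This is the asserted exception, and it is the only one, since the main-case argument fails precisely when $ab=\pm1$, which by the above pins the class down to 11.a.

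The step I expect to be the main obstacle is the $5$th-power characterization of when $\widetilde{E}$ has a rational $5$-torsion point: for $\ell=3$ this came cleanly from factoring a cubic (Proposition~\ref{3torsprop}), but for $\ell=5$ the corresponding division-polynomial computation is heavier, so I would lean on the split-Cartan description already established in this section rather than redo it. A secondary point requiring care is the bookkeeping for the exceptional class: one must verify that quotienting $X_0(11)$ by its \emph{standard} $5$-torsion subgroup lands on 11.a1 (which has trivial torsion), so that the reversed configuration $E=X_0(11),\ \widetilde{E}=X_1(11)$ does not satisfy the hypotheses and thereby introduce a spurious second exception.
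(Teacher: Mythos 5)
Your proposal is correct and takes essentially the same route as the paper's proof: both reduce to the parameterization $E \cong E_{C_5}(s^5,t^5)$, $\widetilde{E} \cong \widetilde{E}_{C_5}(s^5,t^5)$ via the fifth-power criterion (the paper cites \cite[Lemma~4.3.3]{ckv} for this), read off the $C_5$ rows of Table~\ref{ta:Tamagawanumbers} when $st \neq \pm 1$, and identify the case $st = \pm 1$ with the exceptional isogeny class 11.a. The only cosmetic difference is that you obtain $25 \mid c_E$ from the single curve $E$ (since $v_p(ab) \geq 5$ gives $c_{E,p} = 5\,v_p(ab) \geq 25$), whereas the paper multiplies $5 \mid c_E$ by $5 \mid c_{\widetilde{E}}$; both follow from the same table entry, and your worry about a ``spurious second exception'' is moot since the statement is about isogeny classes, so either orientation of the $11.\mathrm{a}$ configuration yields the same single exception.
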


\begin{proof}
By \cite[Lemma~4.3.3]{ckv}, we may assume that there are relatively prime integers $s$ and $t$ such that $E$ and $\widetilde{E}$ are $\mathbb{Q}$-isomorphic to $E_{T}=E_{T}(s^{5},t^{5})$ and $\widetilde{E}_{T}=\widetilde{E}_{T}(s^{5},t^{5})$, respectively. If $st\neq\pm1$, then the global Tamagawa numbers of both $E_{T}$ and $\widetilde{E}_{T}$ is divisible by $5$. So it suffices to consider the cases corresponding to $st=\pm1$. It is then checked that the cases corresponding to $st=\pm1$ result in the isogeny class \href{https://www.lmfdb.org/EllipticCurve/Q/11/a/}{11.a}, which has the property that the product of the global Tamagawa numbers of the elliptic curves in the isogeny class is $5$.
\end{proof}

To finish this section, we consider the special case where both ${E}_{T}(a,b)$ and $\widetilde{E}_{T}(a,b)$ have a point of order $5$, so that $\im \overline{\rho}_{E,5}$ is a split Cartan subgroup of $\GL_2(\F_5)$.   Since our aim in this paper is primarily to study the effect of $\ell$-isogeny on the global Tamagawa number when the isogenous curve does \emph{not} have a $\ell$-torsion point, we do not pursue the split Cartan case in detail.  Rather, we simply end by writing down the discriminant formulas and observing that one could engineer families of specializations with Tamagawa numbers of high 5-adic valuation.  Since $\gcd(a,b)=1$, we may write $a= s^5$ and $b=t^5$.  Define
\begin{align*}
d_1(s,t) &= s^2 + st - t^2 \\
d_2(s,t) &= s^4 - 3s^3t + 4s^2t^2 - 2st^3 + t^4 \\
d_3(s,t) &= s^4 + 2s^3t + 4s^2t^2 + 3st^3 + t^4.
\end{align*}
Then we compute
\begin{align*}
\Delta_T(s^5,t^5) &= -s^{25}t^{25}d_1(s,t)d_2(s,t)d_3(s,t) \\
\widetilde{\Delta}_T(s^5,t^5) &= -s^{5}t^{5}d_1(s,t)^5d_2(s,t)^5d_3(s,t)^5.
\end{align*}

\begin{rmk}
Experimentally, $c_{\widetilde{E}_{T}(a,b)}$ appears to be \emph{more} divisible by $5$ than $c_{E_{T}(a,b)}$.  This is likely due to the fact that the $\widetilde{\Delta}_{T}(s^5,t^5)$ factors as
\[
-s^5t^5(s^2 + st - t^2)^5(s^4 - 3s^3t + 4s^2t^2 - 2st^3 + t^4)^5(s^4 + 2s^3t + 4s^2t^2 + 3st^3 + t^4)^5.
\]
For any split multiplicative prime dividing $(s^2 + st - t^2)$, $(s^4 - 3s^3t + 4s^2t^2 - 2st^3 + t^4)$, or $(s^4 + 2s^3t + 4s^2t^2 + 3st^3 + t^4)$, the Tamagawa number $c_{\widetilde{E}_{T}(a,b)}$ picks up an additional factor of 5. 
 Then, we performed a simple computer calculation. For each $(a,b) \in [2,1000] \times [2,1000]$ with $\gcd(a,b)=1$, we found $\mathbf{18}$ cases in which $c_{\widetilde{\Delta}_{T}(a^5,b^5)} \not \equiv 0 \pmod{5^5}$ and $\mathbf{36180}$ cases in which $c_{{\Delta}_{T}(a^5,b^5)} \not \equiv 0 \pmod{5^5}$. 
\end{rmk}

\section{Tamagawa Indivisibility by  \texorpdfstring{$5$}{5} and  \texorpdfstring{$7$}{7}} \label{conditional_proofs}

While it was straightforward to produce infinitely many specializations of $\widetilde{E}_{T}(a,b)$ without a $\ell$-torsion point and with $\widetilde{c}$ divisible by $\ell$, it is more difficult to show that there are infinitely many specializations for which $\widetilde{c}$ is coprime to $\ell$. To investigate this further, set
\[
\left(  n_{\ell}(a,b),f_{\ell}(a,b)\right)  =\left\{
\begin{array}
[c]{cl}%
\left(  ab,a^{2}+11ab-b^{2}\right)   & \text{if }\ell=5,\\
\left(  ab(a-b),a^{3}+5a^{2}b-8ab^{2}+b^{3}\right)   & \text{if }\ell=7.
\end{array}
\right.
\]
With this notation, observe that the discriminant $\Delta_{T}$ and $\widetilde{\Delta}_{T}$ of $E_{T}$ and $\widetilde{E}_{T}$, respectively, are given by
\[
\Delta_{T}=n_{\ell}(a,b)^{\ell}f_{\ell}(a,b)\qquad\text{and}\qquad\widetilde{\Delta
}_{T}=n_{\ell}(a,b)f_{\ell}(a,b)^{\ell}.
\]
By Theorem \ref{thmontama}, to engineer infinitely many specializations of $\widetilde{E}_{T}=\widetilde{E}_{T}(a,b)$ with global Tamagawa number coprime to $\ell$, it must be the case that $\widetilde{E}_{T}$ have non-split multiplicative reduction at each prime $q\neq \ell$ dividing $f_{\ell}(a,b)$. To this end, we introduce the following notation for ease of reference to Theorem~\ref{thmontama}:
\[
g_{\ell}(a,b)=\left\{
\begin{array}
[c]{cl}%
a^{2}+b^{2} & \text{if }\ell=5,\\
a^{2}-ab+b^{2} & \text{if }\ell=7.
\end{array}
\right.
\]
With all of this setup, we now have the following characterization of those $\widetilde{c}$ that are coprime to $\ell$.

\begin{thm}
\label{coprimethm}Let $T=C_{\ell}$ for $\ell\in\left\{  5,7\right\}  $, and let $E/\mathbb{Q}$ be an elliptic curve with a $\ell$-torsion point $P$. Then the global Tamagawa number of $\widetilde{E}=E/\left\langle P\right\rangle $ is coprime to $\ell$ if and only if there exists positive relatively prime integers $a$ and $b$ such that $n_{\ell}(a,b)$ is $\ell$-th power-free such that $E$ and $\widetilde{E}$ are $\mathbb{Q}$-isomorphic to $E_{T}=E_{T}(a,b)$ and $\widetilde{E}_{T}(a,b)$, respectively, and for each prime $q\neq \ell$ dividing $f_{\ell}(a,b)$, it is the case that
\begin{equation}
\left(  \frac{-\ell g_{\ell}(a,b)}{q}\right)  =-1.\label{quad}%
\end{equation}

\end{thm}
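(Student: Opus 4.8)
The plan is to reduce to the parametrized models of Proposition~\ref{isomodels} and then evaluate the global Tamagawa number $\widetilde{c}=\prod_{p}\widetilde{c}_{p}$ prime-by-prime via Theorem~\ref{thmontama}, isolating exactly which local factors can be divisible by $\ell$. Since $E$ has an $\ell$-torsion point, Proposition~\ref{isomodels} furnishes coprime integers $a,b$ with $a>0$ and $\Q$-isomorphisms $E\cong E_{T}(a,b)$, $\widetilde{E}\cong\widetilde{E}_{T}(a,b)$; as the global Tamagawa number is an isomorphism invariant, it suffices to decide when $\widetilde{c}_{T}$ is coprime to $\ell$. Recall $\widetilde{\Delta}_{T}=n_{\ell}(a,b)\,f_{\ell}(a,b)^{\ell}$, and I would first record the elementary fact that $n_{\ell}(a,b)$ and $f_{\ell}(a,b)$ are coprime: any common prime $q$ would have to divide $b^{\ell}$ (if $q\mid a$), $a^{\ell}$ (if $q\mid b$), or a power of $b$ (if $q\mid a-b$, in the case $\ell=7$), each contradicting $\gcd(a,b)=1$. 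Thus the bad primes of $\widetilde{E}_{T}$ split cleanly into those dividing $n_{\ell}$ and those dividing $f_{\ell}$.

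Next I would run through the rows of Table~\ref{ta:Tamagawanumbers}. At a prime $p\mid n_{\ell}(a,b)$ the curve $\widetilde{E}_{T}$ has split multiplicative reduction with $\widetilde{c}_{p}=v_{p}(n_{\ell}(a,b))$, so this factor is coprime to $\ell$ if and only if $\ell\nmid v_{p}(n_{\ell})$; requiring this for all $p$ simultaneously is the condition that $n_{\ell}$ be $\ell$-th power-free. At a prime $q\neq\ell$ dividing $f_{\ell}(a,b)$ the reduction is multiplicative with $v_{q}(\widetilde{\Delta}_{T})=\ell\,v_{q}(f_{\ell})$: if it is split then $\widetilde{c}_{q}=\ell\,v_{q}(f_{\ell})$ is divisible by $\ell$, whereas if it is non-split then $\widetilde{c}_{q}\in\{1,2\}$ is coprime to $\ell$. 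Because $E_{T}$ and $\widetilde{E}_{T}$ are isogenous they share the same reduction type at each prime, so the split/non-split determination of \cite[Theorem~2.2]{br2} for $E_{T}$ applies to $\widetilde{E}_{T}$: the reduction is split precisely when $\left(\frac{-\ell g_{\ell}(a,b)}{q}\right)=1$, and hence non-splitness is exactly condition~\eqref{quad}. Finally, at $p=\ell$ the table gives $\widetilde{c}_{\ell}\in\{1,2\}$ in every additive case, and in the multiplicative case $\ell\mid n_{\ell}$ the factor $v_{\ell}(n_{\ell})$ is again coprime to $\ell$ once $n_{\ell}$ is $\ell$-th power-free; this explains why \eqref{quad} need only be imposed for $q\neq\ell$. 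I would also note that away from $\ell$ the curve $\widetilde{E}_{T}$ never has additive reduction \cite{MR0457444}, so these are the only cases.

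Assembling these computations proves both implications at once. For the forward direction, if $\widetilde{c}$ is coprime to $\ell$ then no local factor is divisible by $\ell$: the multiplicative primes dividing $n_{\ell}$ yield $n_{\ell}$ being $\ell$-th power-free, and the multiplicative primes $q\neq\ell$ dividing $f_{\ell}$ force \eqref{quad}. Conversely, if $n_{\ell}$ is $\ell$-th power-free and \eqref{quad} holds for every prime $q\neq\ell$ dividing $f_{\ell}$, then by the case analysis each $\widetilde{c}_{p}$ is coprime to $\ell$, whence so is the product $\widetilde{c}$.

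The genuinely delicate points, and where I would spend the most care, are twofold. First, one must verify that the bad reduction is exhausted by the rows for $n_{\ell}$ and $f_{\ell}$ — in particular that no small prime (e.g.\ $2,3$ for $\ell=5$, or $2,3,5,11$ for $\ell=7$) divides $f_{\ell}$ without dividing $n_{\ell}$ — which is a short residue computation modulo each such prime. Second, one must correctly pin down the quantity governing the split/non-split dichotomy: the subtlety is that the Legendre symbol in \cite[Theorem~2.2]{br2} is attached to $-\ell\,g_{\ell}(a,b)$, and confirming that this matches the entries $-5(a^{2}+b^{2})$ and $-7(a^{2}-ab+b^{2})$ of Table~\ref{ta:Tamagawanumbers} is the crux of translating the reduction type into \eqref{quad}.
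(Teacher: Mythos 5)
Your overall route is the same as the paper's: reduce to the parametrized curves $E_{T}(a,b)$ and $\widetilde{E}_{T}(a,b)$ via Proposition \ref{isomodels}, then read off every local Tamagawa number from Theorem \ref{thmontama}. Your local analysis is in fact more detailed than the paper's own one-paragraph argument --- the coprimality of $n_{\ell}(a,b)$ and $f_{\ell}(a,b)$, the isogeny-invariance of the split/non-split dichotomy, and the discussion of the prime $q=\ell$ (where the reduction is additive with $\widetilde{c}_{\ell}\in\{1,2\}$) are all correct and are exactly the facts that make the converse ``automatic from Theorem \ref{thmontama},'' as the paper puts it. (One shared imprecision: you, like the paper, identify ``no $v_{p}(n_{\ell})$ is divisible by $\ell$'' with ``$n_{\ell}$ is $\ell$-th power-free''; the latter is strictly stronger, e.g.\ $v_{p}(n_{\ell})=\ell+1$ violates power-freeness but not coprimality of $\widetilde{c}_{p}$ to $\ell$. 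Since the paper makes the identical inference, this is not a deviation from its proof.)

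The genuine gap is the positivity of $b$. The theorem asserts the existence of \emph{positive} relatively prime integers $a$ and $b$ with the stated properties, whereas Proposition \ref{isomodels} only guarantees $a>0$; the integer $b$ may be negative. The paper spends the bulk of its proof on precisely this point: it exhibits explicit isomorphisms $\phi_{5}=\left[1,ab,-a,0\right]$ from $E_{T}(a,b)$ to $E_{T}(a,-b)$, and $\phi_{7}=\left[1,ab^{3}-a^{2}b^{2},b^{2}-a^{2},a^{3}b^{3}-2a^{2}b^{4}+ab^{5}\right]$ from $E_{T}(a,b)$ to $E_{T}(-b,a-b)$, so that a pair with $b<0$ can be replaced by a pair of positive coprime parameters before Theorem \ref{thmontama} is invoked. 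Without this step, your forward direction establishes the $\ell$-th-power-free condition and the Legendre-symbol condition \eqref{quad} only for the possibly-non-positive pair furnished by Proposition \ref{isomodels}; since $n_{\ell}$, $f_{\ell}$, and $g_{\ell}$ evaluated at the renormalized pair are different integers (for instance, $f_{5}(a,-b)=a^{2}-11ab-b^{2}$), the conclusion does not transfer for free, and the existence claim in the statement is not literally proved. Adding the normalization-by-isomorphism step closes the gap; the rest of your argument then goes through unchanged.
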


\begin{proof}
By Proposition \ref{isomodels}, we may assume that there are relatively prime integers with $a$ positive such that $E$ and $\widetilde{E}$ are $\mathbb{Q}$-isomorphic to $E_{T}=E_{T}(a,b)$ and $\widetilde{E}_{T}(a,b)$, respectively. If $b$ is negative, then
\[
\phi_{\ell}=\left\{
\begin{array}
[c]{cl}%
\left[  1,ab,-a,0\right]   & \text{if }\ell=5,\\
\left[  1,ab^{3}-a^{2}b^{2},b^{2}-a^{2},a^{3}b^{3}-2a^{2}b^{4}+ab^{5}\right]
& \text{if }\ell=7,
\end{array}
\right.
\]
is an isomorphism from $E_{T}$ to $E_{T}(a,-b)$ (resp. $E_{T}(-b,a-b))$ if $\ell=5$ (resp. $\ell=7$). This shows that we may assume $a$ and $b$ to be positive relatively prime positive integers. Since the global Tamagawa number $\widetilde{c}_{T}$ of $\widetilde{E}_{T}$ is coprime to $\ell$, it must be the case by Theorem \ref{thmontama} that $n_{\ell}(a,b)$ is $\ell$-th power-free. In addition, we must also have that for each prime $q\neq \ell$ dividing $f_{\ell}(a,b)$, it is the case that (\ref{quad}) holds. This concludes the forward direction, and the converse is automatic from Theorem~\ref{thmontama}.
\end{proof}

It remains to engineer infinitely many pairs $(a,b)$ satisfying the hypotheses of Theorem~\ref{coprimethm}.  As the number of prime divisors of $f_{\ell}(a,b)$ grows, this becomes more and more difficult to satisfy.    Thus, the simplest way to satisfy these hypotheses is to show that there are infinitely many coprime, $\ell$th-power-free $(a,b)$ such that $f_{\ell}(a,b) = q$ is a prime number and $\left( \frac{-\ell g_\ell(a,b)}{q} \right) =-1$.  We discuss two special cases.

\bigskip

\noindent \textbf{\fbox{Case 1: $\ell=5$}}  We start with a lemma.

\begin{lem}
\label{LegendreQuad}There are infinitely many coprime integer solutions $\left(  a,b\right)  $ satisfying
\begin{align}
a^{2}+11ab-b^{2}  & =19.\label{eq19}%
\end{align}
Moreover, each solution $\left(  a,b\right)  \in \Z^{2}$ to \eqref{eq19} satisfies $\left(  \frac
{-5(a^{2}+b^{2})}{19}\right)  =-1$.
\end{lem}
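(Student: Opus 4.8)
The plan is to read equation \eqref{eq19} as the representation problem for the integral binary quadratic form $f(a,b)=a^{2}+11ab-b^{2}$, whose discriminant is $11^{2}+4=125$. Since $125>0$ is not a perfect square, $f$ is indefinite and its group of proper automorphs is infinite, governed by the Pell-type equation $t^{2}-125u^{2}=4$. First I would exhibit one solution of \eqref{eq19} by inspection: $(a,b)=(1,2)$ works, since $1+22-4=19$. Next I would produce an automorph from the solution $(t,u)=(123,11)$ of $t^{2}-125u^{2}=4$, namely the substitution $\sigma\colon(a,b)\mapsto(a+11b,\,11a+122b)$, and verify by a direct expansion that $f(a+11b,\,11a+122b)=a^{2}+11ab-b^{2}$. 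Thus $\sigma$ carries solutions of \eqref{eq19} to solutions.

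For the infinitude and coprimality, I would note that $\sigma$ is given by a matrix of determinant $1\cdot122-11\cdot11=1$, so it lies in $\SL_{2}(\Z)$ and preserves the $\gcd$ of a coordinate pair; since $(1,2)$ is coprime, every iterate $\sigma^{n}(1,2)$ is coprime. Because both coordinates of $(1,2)$ are positive and the new second coordinate $11a+122b$ strictly exceeds $b$ when $a,b>0$, the second coordinates strictly increase along the orbit, producing infinitely many distinct coprime solutions. This settles the first assertion.

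For the second assertion I would work modulo $19$. Reducing \eqref{eq19} gives $a^{2}+11ab-b^{2}\equiv0\pmod{19}$. I would first rule out $19\mid b$: it would force $19\mid a$, and then $19^{2}\mid a^{2}+11ab-b^{2}=19$, a contradiction; hence $b$ is invertible mod $19$. Writing $r\equiv ab^{-1}$, the congruence becomes $r^{2}+11r-1\equiv0\pmod{19}$; since its discriminant $125\equiv11$ and $7^{2}\equiv11\pmod{19}$, I get $r\equiv10$ or $r\equiv17\pmod{19}$. Correspondingly $a^{2}+b^{2}\equiv(r^{2}+1)b^{2}$ is $\equiv6b^{2}$ or $\equiv5b^{2}\pmod{19}$. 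As $5$ and $6$ are both quadratic residues mod $19$ (indeed $9^{2}\equiv5$ and $5^{2}\equiv6$) and $b\not\equiv0$, in either case $a^{2}+b^{2}$ is a nonzero square mod $19$, so $\left(\frac{a^{2}+b^{2}}{19}\right)=1$. Since $-5\equiv14$ is a nonresidue mod $19$, i.e.\ $\left(\frac{-5}{19}\right)=-1$, multiplicativity yields $\left(\frac{-5(a^{2}+b^{2})}{19}\right)=-1$, as claimed.

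The computations are all routine once the automorph is in hand, so the only real obstacle is the infinitude step: recognizing that \eqref{eq19} is controlled by an indefinite quadratic form and locating a nontrivial automorph (equivalently, a solution of $t^{2}-125u^{2}=4$). Everything in the second assertion is then a finite verification modulo $19$.
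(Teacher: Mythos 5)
Your proof is correct, and its overall strategy matches the paper's: find the seed solution $(1,2)$, appeal to Pell-type theory for infinitude, and settle the Legendre symbol by a two-case analysis modulo $19$ (your roots $r\equiv 17,10$ of $r^2+11r-1\equiv 0$ are exactly the paper's congruences $a\equiv -2b$ and $a\equiv -9b$, obtained there by factoring $a^2+11ab-b^2\equiv(a+2b)(a+9b)\pmod{19}$). The differences are in execution and are worth noting. For infinitude, the paper simply cites ``the theory of Pell conics,'' whereas you make this step fully explicit by exhibiting the automorph $\sigma(a,b)=(a+11b,\,11a+122b)$ coming from the solution $(t,u)=(123,11)$ of $t^2-125u^2=4$; this is more self-contained and constructive. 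For coprimality, the approaches genuinely diverge: the paper proves that \emph{every} integer solution of \eqref{eq19} is automatically coprime (if $d=\gcd(a,b)$ then $d^2\mid 19$, so $d=1$), while you only establish coprimality along the $\sigma$-orbit of $(1,2)$ via the fact that $\sigma\in\SL_2(\Z)$ preserves gcd's. Your weaker statement suffices for the lemma as written, but the paper's one-line $d^2\mid 19$ argument is both shorter and stronger, and you might prefer it. Finally, your treatment of the symbol (showing $a^2+b^2$ is a nonzero residue in both cases and separately that $\left(\frac{-5}{19}\right)=-1$) is a mild reorganization of the paper's direct evaluation $\left(\frac{-5\cdot 5b^2}{19}\right)=\left(\frac{-1}{19}\right)=-1$ and $\left(\frac{-5\cdot 82 b^2}{19}\right)=\left(\frac{8}{19}\right)=-1$; both are routine and correct.
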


\begin{proof}
Note that $(a,b) = (1,2)$ is a solution to (\ref{eq19}). Thus, there are there are infinitely many integer solutions by the theory of Pell conics. Now let $(a,b)$ be an arbitrary solution to (\ref{eq19}), suppose that $d=\gcd(a,b)$, and write $a=d\bar{a}$ and $b=d\bar{b}$. Then
\[
d^{2}\left(  \bar{a}^{2}+11\bar{a}\bar{b}-\bar{b}^{2}\right)  =19.
\]
This implies that $d^{2}$ divides $19$, and thus $\gcd\left(  a,b\right)  =1$. It remains to show that the Legendre symbol is as claimed.  Now we compute 
\[
a^{2}+11ab-b^{2}\equiv\left(  a+2b\right)  \left(  a+9b\right)  \ \left(
\operatorname{mod}19\right)  =0\ \left(  \operatorname{mod}19\right)  .
\]
Consequently, $a=-2b$\ $\left(  \operatorname{mod}19\right)  $ or
$a=-9b\ \left(  \operatorname{mod}19\right)  $. The proof now follows since
substituting into the Legendre symbol yields
\[
\left(  \frac{-5\left(  a^{2}+b^{2}\right)  }{19}\right)  =\left\{
\begin{array}
[c]{cl}%
\left(  \frac{-5\left(  4b^{2}+b^{2}\right)  }{19}\right)  =\left(  \frac
{-1}{19}\right)  =-1 & \text{if }a=-2b\ \left(  \operatorname{mod}19\right)
,\\
\left(  \frac{-5\left(  81b^{2}+b^{2}\right)  }{19}\right)  =\left(  \frac
{8}{19}\right)  =-1 & \text{if }a=-9b\ \left(  \operatorname{mod}19\right)  .
\end{array}
\right. \qedhere
\]
\end{proof}

We would like to use exactly these pairs $(a,b)$ as specializations to produce infinitely many $\widetilde{c}$ coprime to $5$, but we are currently unable to prove that infinitely many of them are also 5th-power-free.  If we define the set 
\[
X_5 \ddef \left\{ \left(  a,b\right)  \in \Z^{2}\mid\gcd(a,b)=1,a^{2}+11ab-b^{2}=19,\text{ and }a,b\text{ 5th-power-free}%
\right\},
\]
then we have the following corollary of Theorem \ref{coprimethm}, contingent upon the infinitude of $X_5$.

\begin{cor} \label{5coprimecor}
If $X_5$ is infinite, then there are infinitely many elliptic curves $E$ with a $5$-torsion point $P$ such that $\widetilde{E}=E/\left\langle P\right\rangle $ has global Tamagawa number coprime to $5$.
\end{cor}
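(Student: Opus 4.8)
The plan is to run the solutions in $X_5$ through Theorem~\ref{coprimethm}, with Lemma~\ref{LegendreQuad} supplying the single Legendre-symbol condition the theorem demands. Assume $X_5$ is infinite. Since the equation $a^2+11ab-b^2=19$ and the $5$th-power-free condition are both invariant under $(a,b)\mapsto(-a,-b)$, I first normalize to $a>0$, and I further restrict to pairs with $b>0$ (for reasons explained below); by the theory of Pell conics there are infinitely many positive solutions of \eqref{eq19}, lying on the forward orbit of $(1,2)$ under a proper automorphism of the conic. For each such positive pair $(a,b)\in X_5$, Proposition~\ref{isomodels} realizes $E$ and $\widetilde E$ as $E_T(a,b)$ and $\widetilde E_T(a,b)$ with $T=C_5$, where $(0,0)$ is the $5$-torsion point on $E$.

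With a positive pair $(a,b)\in X_5$ fixed, I check the three hypotheses of Theorem~\ref{coprimethm} in turn. First, $n_5(a,b)=ab$ is $5$th-power-free: since $\gcd(a,b)=1$ every prime divides at most one of $a,b$, and each factor is $5$th-power-free by the definition of $X_5$, so $v_p(ab)\le 4$ for all $p$. Second, $f_5(a,b)=a^2+11ab-b^2=19$ is a prime different from $5$, so the unique prime $q\neq 5$ dividing $f_5(a,b)$ is $q=19$. Third, condition \eqref{quad} at $q=19$ reads $\left(\frac{-5\,g_5(a,b)}{19}\right)=\left(\frac{-5(a^2+b^2)}{19}\right)=-1$, which is exactly the second assertion of Lemma~\ref{LegendreQuad}. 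These three verifications place $(a,b)$ on the satisfied side of the equivalence in Theorem~\ref{coprimethm}, so the global Tamagawa number $\widetilde c$ of $\widetilde E=E/\langle P\rangle$ is coprime to $5$.

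It remains to produce infinitely many pairwise non-isomorphic curves. For $T=C_5$ the model $E_T(a,b)$ is a global minimal model by \cite[Theorem~6.1]{Barrios}, so its minimal discriminant equals $\Delta_T=(ab)^5\cdot 19$ up to sign. Along the positive solutions of \eqref{eq19} both coordinates grow without bound, so $ab\to\infty$ and hence $|\Delta_T|\to\infty$. As only finitely many elliptic curves over $\Q$ have minimal discriminant of bounded absolute value, the infinitely many positive members of $X_5$ yield infinitely many non-isomorphic $E$, and thus infinitely many $\widetilde E$ with $\widetilde c$ coprime to $5$.

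The \emph{delicate point} is the sign bookkeeping flagged above. The set $X_5$ as defined admits solutions of every sign pattern, and only those with $a,b>0$ preserve the clean value $f_5=19$: for a solution with $a>0>b$, the positive representative $(a,-b)$ produced by the isomorphism $\phi_5$ in the proof of Theorem~\ref{coprimethm} has $f_5(a,-b)=a^2-11ab-b^2=19-22ab\neq 19$, a typically composite integer whose prime divisors carry no controlled Legendre condition, so such a solution generically yields $\widetilde c$ divisible by $5$. I would therefore either build positivity into the definition of $X_5$ or argue that its infinitude forces infinitely many positive members. The genuine difficulty, and the reason the statement is only conditional, lies upstream: proving that $X_5$ is infinite amounts to showing that infinitely many solutions of the Pell conic \eqref{eq19} have $5$th-power-free coordinates, which is precisely the Diophantine input we cannot supply unconditionally.
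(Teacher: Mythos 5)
Your verification of the hypotheses of Theorem~\ref{coprimethm} for positive pairs (coprimality plus $5$th-power-freeness of each coordinate gives $5$th-power-freeness of $n_5(a,b)=ab$; the value $f_5(a,b)=19$ has the single prime divisor $q=19\neq 5$; Lemma~\ref{LegendreQuad} supplies \eqref{quad}), and your discriminant-growth argument for extracting infinitely many non-isomorphic curves, are both sound, and this is exactly the route the paper takes. The genuine gap is the one you flag and then leave open: your argument applies only to pairs in $X_5$ with $a,b>0$, while the hypothesis of the corollary is that $X_5$ itself is infinite. Infinitude of $X_5$ does not formally imply infinitude of its same-sign part: the solutions of \eqref{eq19} lie on a hyperbola whose two asymptotic directions have slopes of opposite sign, each automorphism orbit of the conic runs off to infinity along the mixed-sign asymptote in one direction, and the $5$th-power-free condition is not preserved by the automorphism, so a priori an infinite $X_5$ could consist almost entirely of mixed-sign pairs. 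Since neither of your two proposed remedies is carried out, the statement as given is not proved.

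The repair shows your ``delicate point'' is illusory, and it traces to an error in the paper rather than to a real obstruction. The isomorphism $\phi_5=[1,ab,-a,0]$ cannot carry $E_T(a,b)$ to $E_T(a,-b)$: having $u=1$, it preserves the discriminant, yet $\Delta_T(a,b)=(ab)^5f_5(a,b)$ while $\Delta_T(a,-b)=-(ab)^5f_5(a,-b)$, and these differ in general. A direct computation with the transformation formulas shows that $\phi_5$ carries $E_T(a,b)$ to $E_T(-b,a)$ (the change of generator of $\langle P\rangle$), which is the target one should use. Under $(a,b)\mapsto(-b,a)$ one has $n_5\mapsto -n_5$, $f_5\mapsto -f_5$, and $g_5\mapsto g_5$; hence $5$th-power-freeness of $n_5$, the set of primes dividing $f_5$ (still only $q=19$), and the Legendre symbol in \eqref{quad} --- which Lemma~\ref{LegendreQuad} controls for \emph{all} integer solutions of \eqref{eq19}, not just positive ones --- are all unchanged. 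So every $(a,b)\in X_5$, after possibly negating both coordinates and applying this relabeling, produces a positive pair satisfying Theorem~\ref{coprimethm}, and your closing argument (since $f_5$ is pinned at $\pm 19$, any infinite set of solutions has $|ab|\to\infty$, so the minimal discriminants $(ab)^5\cdot(\pm 19)$ are unbounded) yields infinitely many distinct $\widetilde{E}$ with $\widetilde{c}$ coprime to $5$. With this correction your proof establishes the corollary in full generality and coincides with the paper's intended one-sentence argument.
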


\begin{proof}
The infinitude of $X_5$ furnishes us with infinitely many specializations satisfying the hypotheses of Theorem \ref{coprimethm}.
\end{proof}

\bigskip

\noindent \textbf{\fbox{Case 2: $\ell=7$}} When $\ell=7$, the situation is more complicated, owing to the fact that $f_7(a,b)$ is cubic.  In the previous case, we were able to easily show using the theory of Pell conics that for a fixed prime $q$ (we used $q=19$), there are infinitely many solutions to $f_5(a,b) = q$, provided there exists one solution.  Then apply Corollary \ref{5coprimecor}.  

We still wish to consider solutions to $f_7(a,b) = q$ prime since that is the simplest way to apply Theorem \ref{coprimethm}.  Toward this end, we make several observations.

\begin{lem}\label{lem7torsspeind}
With notation as above, suppose that $f_{7}(a,b)=q$ is prime. Then $q\equiv\pm1\ (\operatorname{mod}7)$, and $q\equiv\pm1\ (\operatorname{mod}7)$ if and
only if
\[
\left(  \frac{-7g_{7}(a,b)}{q}\right)  =\pm1.
\]
\end{lem}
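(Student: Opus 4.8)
The plan is to treat the two assertions separately, reducing each to an explicit polynomial identity, with the arithmetic of the cyclic cubic field $\Q(\zeta_7)^+$ (of square discriminant $49$) lurking in the background.

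For the claim $q\equiv\pm1\pmod 7$, I would first observe that the cubic $t^3+5t^2-8t+1$, whose homogenization is $f_7(a,b)$, reduces to $(t-3)^3$ modulo $7$; comparing coefficients gives the homogeneous congruence $f_7(a,b)\equiv (a-3b)^3\pmod 7$. Consequently, if $q=f_7(a,b)$ is a prime different from $7$, then $7\nmid(a-3b)$, and since the nonzero cubes modulo $7$ are exactly $\pm1$, we get $q\equiv(a-3b)^3\equiv\pm1\pmod 7$. To rule out $q=7$, I would note that $7\mid f_7(a,b)$ forces $a\equiv 3b\pmod 7$; substituting $a=3b+7k$ and expanding modulo $49$ shows $49\mid f_7(a,b)$, so $f_7(a,b)$ can never equal $7$. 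Conceptually this reflects that $f_7$ is a norm form of $\Q(\zeta_7)^+$, in which a prime is represented only if it splits completely, i.e.\ only if $q\equiv\pm1\pmod 7$.

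For the equivalence with the Legendre symbol, the crux is the identity
\[
(3a^2+10ab-8b^2)^2 = (9a+15b)\,f_7(a,b) + 49\,b^2\,(a^2-ab+b^2),
\]
which I would verify directly; it is the homogenization of $m'(t)^2\equiv 49(t^2-t+1)\pmod{m(t)}$, reflecting that $\theta^2-\theta+1=(m'(\theta)/7)^2$ is a perfect square in $\Q(\zeta_7)^+$. Reducing modulo a prime $q\mid f_7(a,b)$ with $q\neq 7$ yields $(3a^2+10ab-8b^2)^2\equiv 49\,b^2\,g_7(a,b)\pmod q$. Since $\gcd(a,b)=1$ forces $q\nmid b$, and since a common prime divisor of $f_7$ and $g_7$ must divide their resultant $\operatorname{Res}(t^3+5t^2-8t+1,\,t^2-t+1)=49$ and hence equal $7$, we have $q\nmid g_7(a,b)$. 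Thus $g_7(a,b)$ is congruent to a nonzero square modulo $q$, so $\left(\frac{g_7(a,b)}{q}\right)=1$.

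Finally I would invoke quadratic reciprocity: because $-7\equiv 1\pmod 4$ is the fundamental discriminant of $\Q(\sqrt{-7})$, one has $\left(\frac{-7}{q}\right)=\left(\frac{q}{7}\right)$, which equals $+1$ when $q\equiv 1\pmod 7$ and $-1$ when $q\equiv -1\pmod 7$ (as $6\equiv-1$ is a non-residue mod $7$). Combining, $\left(\frac{-7g_7(a,b)}{q}\right)=\left(\frac{-7}{q}\right)\left(\frac{g_7(a,b)}{q}\right)=\left(\frac{q}{7}\right)$, which gives exactly the asserted matching of signs. The main obstacle is discovering the quadratic identity above; once it is in hand everything else is routine, and it is precisely the square discriminant $49$ of $\Q(\zeta_7)^+$ that makes $g_7$ a square modulo $f_7$.
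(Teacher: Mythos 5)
Your proof is correct, and it is essentially the paper's own approach: the authors omit all details, stating only that the lemma ``follows from arithmetic with congruences and several applications of quadratic reciprocity,'' which is precisely the route you take, so your write-up supplies the argument they left out. All of your key steps check out upon verification: the congruence $f_7(a,b)\equiv(a-3b)^3\pmod{7}$ together with the fact that the nonzero cubes modulo $7$ are $\pm1$, the forced divisibility $49\mid f_7(a,b)$ when $7\mid a-3b$ (which rules out $q=7$), the identity $(3a^2+10ab-8b^2)^2=(9a+15b)\,f_7(a,b)+49\,b^2\,g_7(a,b)$, the resultant value $\operatorname{Res}\bigl(t^3+5t^2-8t+1,\,t^2-t+1\bigr)=49$, and the reciprocity step $\left(\tfrac{-7}{q}\right)=\left(\tfrac{q}{7}\right)$, which together yield exactly the asserted matching of signs.
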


\begin{proof}
The proof follows from arithmetic with congruences and several applications of quadratic reciprocity.  We omit the details.  
\end{proof}

Let $t \in \Z$.  Then the solutions to $f_7(a,b) = t$ can be viewed as affine points on the projective cubic
\begin{align} \label{j0}
f_7(a,b)  = tz^3.
\end{align}
If there exist solutions to (\ref{j0}), then it defines an elliptic curve over $\Q$ (note that the $j$-invariant must be 0 since $z \to e^{2\pi i/3} z$ is an automorphism of order 3).  A standard chord and tangent argument shows that the elliptic curve defined by (\ref{j0}) is birational to $E_{C_3}^0(49t)$, using our notation from above.

Therefore, fixing $t=q \equiv -1 \pmod{7}$, we can only expect finitely many solutions to $f_7(a,b) = q$.   Thus, the identical reasoning as in the case $p=5$ will not produce a (conjecturally) infinite set $X_5$ for us to apply Theorem \ref{coprimethm}. However, Lemma \ref{lem7torsspeind} gives us instructions for an alternate set to consider. Namely,
\begin{small}
\[
X_{7}=\left\{  \left(  a,b\right)  \in\mathbb{Z}^{2}\mid\gcd(a,b)=1,ab(a-b)\text{ is }7\text{-th-power-free, }f_{7}(a,b)\equiv-1\! \pmod{7}\text{ is prime}\right\}
\]

\end{small}

\begin{cor}\label{7coprimecor}
If $X_{7}$ is infinite, then there are infinitely many elliptic curves $E$ with a $7$-torsion point $P$ such that $\widetilde{E}=E/\left\langle P\right\rangle $ has global Tamagawa number coprime to $7$.
\end{cor}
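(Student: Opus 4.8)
The plan is to show that every pair $(a,b) \in X_{7}$ furnishes, via the parametrization, a curve $\widetilde{E}_{T}(a,b)$ satisfying the hypotheses of Theorem~\ref{coprimethm}, and that an infinite $X_{7}$ forces infinitely many non-isomorphic such curves. First I would fix $(a,b) \in X_{7}$. After applying the isomorphism $\phi_{7}$ from the proof of Theorem~\ref{coprimethm} if necessary, I may assume $a$ and $b$ are positive coprime integers. By the very definition of $X_{7}$, the quantity $n_{7}(a,b) = ab(a-b)$ is $7$-th power-free, which is precisely the first hypothesis required by Theorem~\ref{coprimethm}. So the only remaining thing to check is the quadratic condition \eqref{quad} at every prime $q \neq 7$ dividing $f_{7}(a,b)$.

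Here is where the design of $X_{7}$ does the work. Since $(a,b) \in X_{7}$, the value $f_{7}(a,b) = q$ is prime with $q \equiv -1 \pmod{7}$; in particular $q \neq 7$, so $q$ is the \emph{unique} prime $\neq 7$ dividing $f_{7}(a,b)$, and \eqref{quad} need only be verified for this single $q$. Applying Lemma~\ref{lem7torsspeind} to $q$, the congruence $q \equiv -1 \pmod{7}$ yields exactly $\left( \frac{-7 g_{7}(a,b)}{q} \right) = -1$, which is \eqref{quad}. Thus every $(a,b) \in X_{7}$ meets all the hypotheses of Theorem~\ref{coprimethm}, and the associated $\widetilde{E}_{T}(a,b)$ has global Tamagawa number coprime to $7$. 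To see that an infinite $X_{7}$ yields infinitely many distinct curves, I would invoke the observation preceding the statement: for a fixed value $t = q$, the solutions to $f_{7}(a,b) = q$ are the integral points (the slice $z = 1$) of the genus-one curve \eqref{j0}, which is birational to $E_{C_{3}^{0}}(49q)$ and hence carries only finitely many of them by Siegel's theorem. Consequently, as $(a,b)$ ranges over an infinite $X_{7}$, the prime $q = f_{7}(a,b)$ takes infinitely many distinct values; all but finitely many satisfy $q \geq 13$, so by Theorem~\ref{thmontama} the curve $\widetilde{E}_{T}(a,b)$ has bad reduction at $q$, and these differing sets of bad primes force infinitely many non-isomorphic curves. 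Theorem~\ref{coprimethm} then completes the argument.

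There is essentially no obstacle in this conditional step: once the definition of $X_{7}$ is unwound, the corollary is a direct consequence of Theorem~\ref{coprimethm} combined with Lemma~\ref{lem7torsspeind}. The genuine difficulty is deliberately quarantined into the hypothesis---namely, proving unconditionally that $X_{7}$ is infinite. This would require showing that the binary cubic form $f_{7}$ represents infinitely many primes $q \equiv -1 \pmod{7}$ while the accompanying $ab(a-b)$ remains free of $7$-th powers, a statement on prime values of a cubic form that lies well beyond the reach of current techniques; this is exactly why the result is stated conditionally.
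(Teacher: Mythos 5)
Your proof is correct and takes essentially the same route as the paper: the paper's entire proof is the single observation that Lemma~\ref{lem7torsspeind} makes every element of $X_{7}$ a specialization satisfying the hypotheses of Theorem~\ref{coprimethm}. Your write-up simply fills in the details the paper leaves implicit --- the reduction to positive parameters, the fact that $q=f_{7}(a,b)$ is the unique prime $\neq 7$ dividing $f_{7}(a,b)$, and the finitely-many-solutions-per-$q$ argument ensuring that infinitely many pairs in $X_{7}$ really do produce infinitely many non-isomorphic curves.
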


\begin{proof}
By Lemma \ref{lem7torsspeind}, the infinitude of $X_{7}$ yields infinitely many specializations satisfying the hypothesis of Theorem \ref{coprimethm}.
\end{proof}

We perform another simple experiment: running over all coprime $a$ and $b$ for $1\leq a,b \leq 1000$ we find \textbf{32139} distinct values of $q$ such that $f_7(a,b) = q \equiv -1 \pmod{7}$ and $ab(a-b)$ is 7-th power-free.    We expect that there are infinitely many $q$ that can be written as $f(a,b)$, though we are unable to prove it.  If so, then it would immediately imply that there are infinitely many specializations $\widetilde{E}_T(a,b)$ that have $\widetilde{c}$ coprime to 7. In the next section we go into some more detail on the computational aspects of this project. 

\section{Computational Results} \label{computations}

\subsection{Background}
One way to strengthen the results of this paper would be to determine whether or not there exists a well-defined proportion of elliptic curves in $\mathcal{L}_{\ell} \smallsetminus \mathcal{G}_{\ell}$ with global Tamagawa number divisible by $\ell$.  This was the point of view taken in \cite{ckv}, where the authors studied the relative proportion of $\mathcal{G}_{\ell}$ in $\mathcal{L}_{\ell}$, as we now recall.

Ordering all elliptic curves over $\Q$ by height, set $\mathcal{L}_{\ell,X}$ to be the number of elliptic curves of height $\leq X$ that locally have a subgroup of order $\ell$, and $\mathcal{G}_{\ell,X}$ the number of height $\leq X$ that globally have a subgroup of order $\ell$.  By \cite[Thm.~1.3.1]{ckv} the limit $\mathcal{P}_\ell \ddef \lim_{X \to \infty} |\mathcal{G}_{\ell,X}|/|\mathcal{L}_{\ell,X}|$ exists, and can be interpreted as the conditional probability that an elliptic curve has a global subgroup of order $\ell$ given that it locally has a subgroup of order $\ell$.  For instance,
\[
\mathcal{P}_{3} = 1/2, \ \mathcal{P}_{5} = 25/34, \text{ and }\mathcal{P}_{7} = 4/(4+\sqrt{7}) \approx 0.602.
\]

However, we are unable at this point to predict whether a similar proportion exists for the Tamagawa data we study in this paper.  More precisely, we set 
\[
\mathcal{T}_{\ell,X} \ddef \frac{| \lbrace E \in \mathcal{L}_{\ell,X} \smallsetminus \mathcal{G}_{\ell,X} ~:~ c_E \equiv 0 \pmod{\ell}\rbrace|}{|\mathcal{L}_{\ell,X} \smallsetminus \mathcal{G}_{\ell,X}|}.
\]
In \cite{hs}, the authors determined asymptotics for the number of elliptic curves of height $\leq X$ with a given torsion subgroup.  (For example, they show that $|\mathcal{G}_{5,X}| = cX^{1/6} + O(X^{1/12})$, where $c$ is a computable constant.)  In \cite{ckv} the authors showed that $|\mathcal{L}_{\ell,X}|$ has the same growth asymptotics and error terms (though with different leading constant), allowing them to compute the limits as $X \to \infty$ as ratios of growth constants.  

Two items that make proving analogous results for $\mathcal{T}_{\ell,X}$ more difficult are
\begin{enumerate}
\item it is not clear what the correct order of growth is for the denominator $|\mathcal{L}_{\ell,X} \smallsetminus \mathcal{G}_{\ell,X}|$, and  
\item as the parameters $a$ and $b$ range over positive, coprime integers, we would need asymptotics on quadratic and cubic forms in $a$ and $b$ with certain prime factorizations.  
\end{enumerate}

With all of that said, we give some data below on $\mathcal{T}_{\ell,X}$ for several values of $X$.  We observe that the proportions do not appear to be converging to a limit that we can discern from these data.

Note that for $\ell\in\left\{  5,7\right\}  $, Proposition \ref{Proppis57} gives infinitely many specializations of $\widetilde{E}_{C_{\ell}}$ whose global Tamagawa number is divisible by $\ell$. Theorem \ref{coprimethm} gives necessary and sufficient conditions on the parameters of $\widetilde{E}_{C_{\ell}}$ for determining those specializations whose global Tamagawa number is coprime to $\ell$. While we are unable to prove that there are infinitely many such specializations, Corollaries \ref{5coprimecor} and \ref{7coprimecor} show that there are infinitely many such specializations provided that the set $X_{\ell}$ is infinite. Motivated by this, we investigate the $\mathcal{T}_{\ell,X}$ by making use of Theorem \ref{coprimethm}.

For an elliptic curve $E/\mathbb{Q}$ given by a global minimal model, the naive height of $E$ is
\begin{align}
\operatorname*{ht}(E)= \frac{1}{12}\log\max\left\{  \left\vert c_{4}^{3}\right\vert
,c_{6}^{2}\right\} \label{log_ht} .
\end{align}
By \cite[Theorem 6.1]{Barrios}, $E_{C_{\ell}}$ is given a global minimal model provided that the parameters $a$ and $b$ are relatively prime. This is not necessarily true for $\widetilde{E}_{C_{\ell}}$. Consequently, our investigation
of $\mathcal{T}_{\ell,X}$ will be done by considering the naive height of $E_{C_{\ell}}$, instead of $\widetilde{E}_{C_{\ell}}$. To this end, we make the following observation:

\begin{lem}
\label{LemKatz}Let $\ell\in\left\{  3,5,7\right\}  $. For each $\widetilde{E}\in\mathcal{L}_{\ell}\backslash\mathcal{G}_{\ell}$, there exists a unique elliptic curve $E\in\mathcal{G}_{\ell}$ such that $\widetilde{E}$ is $\mathbb{Q}$-isomorphic to $E/\left\langle P\right\rangle $, where $P\in E(\mathbb{Q})$ is a point of order $\ell$.
\end{lem}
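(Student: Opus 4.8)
The plan is to pass to the mod-$\ell$ Galois representation $\overline{\rho}_{\widetilde{E},\ell}$ and to exploit the duality of $\ell$-isogenies. First I would record the standard translation of the local condition: for a good prime $p\neq\ell$ one has $\#\widetilde{E}(\F_p)\equiv\det\!\big(\mathrm{id}-\overline{\rho}_{\widetilde{E},\ell}(\Frob_p)\big)\pmod{\ell}$, so membership of $\widetilde{E}$ in $\mathcal{L}_{\ell}$ is equivalent, via Chebotarev, to the statement that every element of $H\ddef\im\overline{\rho}_{\widetilde{E},\ell}\subseteq\GL_2(\Fl)$ has $1$ as an eigenvalue. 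The key structural input—this is the content attributed to Katz and available in \cite{ckv}—is that such an $H$ must stabilize a line in $\Fl^2$. Writing $\alpha,\beta$ for the characters on the stable line and on the corresponding quotient, the eigenvalue condition gives $\ker\alpha\cup\ker\beta=H$, hence $\alpha\equiv1$ or $\beta\equiv1$. The case $\alpha\equiv1$ means $H$ fixes a nonzero vector, i.e. $\widetilde{E}\in\mathcal{G}_{\ell}$, which is excluded; so in a suitable basis $e_1,e_2$ we may assume $H\subseteq\left\{\begin{pmatrix}\ast&\ast\\0&1\end{pmatrix}\right\}$ with the character $\chi$ on $\langle e_1\rangle$ equal to $\det\overline{\rho}_{\widetilde{E},\ell}$, the (nontrivial) mod-$\ell$ cyclotomic character.

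For existence, I would let $C\subseteq\widetilde{E}$ be the rational cyclic subgroup of order $\ell$ corresponding to the stable line $\langle e_1\rangle$, and set $E\ddef\widetilde{E}/C$ with quotient isogeny $\pi\colon\widetilde{E}\to E$. The point is to identify the kernel of the dual isogeny $\widehat{\pi}\colon E\to\widetilde{E}$. Since $\widehat{\pi}\circ\pi=[\ell]$, one has $\ker\widehat{\pi}=\pi(\widetilde{E}[\ell])\cong\widetilde{E}[\ell]/C$ as Galois modules, and by the previous paragraph the Galois action on $\widetilde{E}[\ell]/C$ is trivial. Hence a generator of $\ker\widehat{\pi}$ is a rational point $P\in E(\Q)$ of order $\ell$, so $E\in\mathcal{G}_{\ell}$, and $\widetilde{E}=E/\ker\widehat{\pi}=E/\langle P\rangle$, as required.

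For uniqueness, the crucial point is that $\widetilde{E}$ possesses a \emph{unique} rational cyclic subgroup of order $\ell$. If $\langle v\rangle$ were a stable line distinct from $\langle e_1\rangle$, then in the basis $e_1,v$ the representation is diagonal, and the character $\gamma$ on $\langle v\rangle$ satisfies $\chi\cdot\gamma=\det\overline{\rho}_{\widetilde{E},\ell}=\chi$, whence $\gamma\equiv1$ and $v$ is Galois-fixed, forcing $\widetilde{E}\in\mathcal{G}_{\ell}$, contrary to hypothesis. Consequently $C$ is the only rational cyclic subgroup of order $\ell$. Now let $E'\in\mathcal{G}_{\ell}$ be any curve admitting a point $P'$ of order $\ell$ with $\widetilde{E}\cong E'/\langle P'\rangle$; the dual of the isogeny $E'\to\widetilde{E}$ is an $\ell$-isogeny $\widetilde{E}\to E'$ whose kernel is a rational cyclic subgroup of $\widetilde{E}$ of order $\ell$, hence equals $C$. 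Therefore $E'\cong\widetilde{E}/C\cong E$, proving uniqueness of the isomorphism class.

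The main obstacle is the reducibility step—showing that the eigenvalue-$1$ condition on all of $H$ forces $H$ to stabilize a line, i.e. excluding irreducible images in which every element nonetheless has eigenvalue $1$. The genuinely delicate subcase is $H\subseteq\SL_2(\Fl)$, where every element would be unipotent and hence (by simultaneous triangularizability of a group of unipotent matrices) reducible after all; the remaining irreducible possibilities—contained in the normalizer of a nonsplit Cartan, or of exceptional type—are incompatible with the surjectivity of $\det=\chi$ together with the eigenvalue-$1$ constraint. Rather than reprove this classification, I would invoke the corresponding structural description of $\mathcal{L}_{\ell}$ from \cite{ckv}, which is exactly what makes the local-to-global passage (and hence the Katz-style statement of the lemma) available for $\ell\in\{3,5,7\}$.
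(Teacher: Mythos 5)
Your proof is correct and rests on essentially the same key input as the paper, whose entire proof of this lemma is the single citation \cite[Lemma~1]{Katz} --- the same structural fact (the mod-$\ell$ image of a curve in $\mathcal{L}_{\ell}$ is reducible with trivial character on the stable line or on the quotient) that you invoke via \cite{ckv}. Your write-up simply unpacks that citation, supplying the dual-isogeny construction of $E$ and the uniqueness argument (the unique Galois-stable line, forced by $\det\overline{\rho}_{\widetilde{E},\ell}$ being the cyclotomic character) that the paper's one-line proof leaves to the reader.
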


\begin{proof}
This follows from \cite[Lemma~1]{Katz}.
\end{proof}

For $E\in\mathcal{G}_{\ell}$, let $P_{\ell}\in E(\mathbb{Q})$ denote a point of order $\ell$. Now set $\widetilde{E}=E/\left\langle P_{\ell}\right\rangle $. By Lemma \ref{LemKatz}, we have that there is a one-to-one correspondence between $\mathcal{L}_{\ell}\backslash\mathcal{G}_{\ell}$ and $\widetilde{G}_{\ell}=\left\{  E\in\mathcal{G}_{\ell}\mid\widetilde{E}\in\mathcal{L}_{\ell}\backslash\mathcal{G}_{\ell}\right\}  $. Now let $\widetilde{\mathcal{G}}_{\ell,X}$ denote the number of elliptic curves in $\widetilde{G}_{\ell}$ with height $\leq X$. With this notation, we aim to investigate $\mathcal{T}_{\ell,X}$ by considering the set
\[
\widetilde{\mathcal{T}}_{\ell,X}=\frac{\left\vert \left\{  E\in
\mathcal{\widetilde{G}}_{\ell,X}\mid c_{\widetilde{E}}\equiv
0\ (\operatorname{mod}\ell)\right\}  \right\vert }{\left\vert
\mathcal{\widetilde{G}}_{\ell,X}\right\vert }.
\]
When $\ell\in\left\{  5,7\right\}  $, Theorem \ref{coprimethm} allows us to investigate $\widetilde{\mathcal{T}}_{\ell,X}$ in terms of the parameters of $E_{C_{\ell}}$. Table~\ref{ta:Stats} summarizes our findings, with $\mathcal{N}_{\ell,X}=\left\{  E\in\mathcal{\widetilde{G}}_{\ell,X}\mid c_{\widetilde{E}}\equiv0\ \operatorname{mod}\ell)\right\}  $.

{\begingroup \footnotesize\renewcommand*{\arraystretch}{1.25}
\begin{longtable}{crrc||crrc}
\caption{The quantities $\mathcal{N}_{\ell,X}$, $\mathcal{\widetilde{G}}_{\ell,X}$, and $100\cdot\widetilde{\mathcal{T}}_{\ell,X}$}\\
\hline
$X$ & $\mathcal{N}_{5,X}$ & $\mathcal{\widetilde{G}}_{5,X}$ & $100\cdot
\widetilde{\mathcal{T}}_{5,X}$ & $X$ & $\mathcal{N}_{7,X}$ &
$\mathcal{\widetilde{G}}_{7,X}$ & $100\cdot\widetilde{\mathcal{T}}_{7,X}$\\
\hline
\endfirsthead
\caption[]{\emph{continued}}\\
\hline
$X$ & $\mathcal{N}_{5,X}$ & $\mathcal{\widetilde{G}}_{5,X}$ & $100\cdot
\widetilde{\mathcal{T}}_{5,X}$ & $X$ & $\mathcal{N}_{7,X}$ &
$\mathcal{\widetilde{G}}_{7,X}$ & $100\cdot\widetilde{\mathcal{T}}_{7,X}$\\
\hline
\endhead
\hline
\multicolumn{2}{r}{\emph{continued on next page}}
\endfoot
\hline
\endlastfoot

$4$ & $581$ & $958$ & $60.65\%$ & $13$ & $113\ 594$ & $162\ 440$ & $69.93\%$\\\hline
$5$ & $4\ 611$ & $7\ 105$ & $64.90\%$ & $14$ & $313\ 490$ & $441\ 616$ &
$70.99\%$\\\hline
$6$ & $35\ 603$ & $52\ 412$ & $67.93\%$ & $15$ & $864\ 709$ & $1\ 200\ 358$ &
$72.04\%$\\\hline
$7$ & $272\ 541$ & $387\ 350$ & $70.36\%$ & $16$ & $2\ 381\ 512$ &
$3\ 262\ 925$ & $72.99\%$\\\hline
$8$ & $2\ 067\ 893$ & $2\ 862\ 231$ & $72.25\%$ & $17$ & $6\ 547\ 843$ &
$8\ 869\ 583$ & $73.82\%$\\\hline
$9$ & $15\ 613\ 608$ & $21\ 144\ 556$ & $73.84\%$ & $18$ & $17\ 978\ 901$ &
$24\ 110\ 094$ & $74.57\%$\\\hline
$9.5$ & $42\ 839\ 637$ & $57\ 484\ 021$ & $74.52\%$ & $19$ & $49\ 324\ 629$ &
$65\ 537\ 998$ & $75.26\%$\\\hline
$10$ & $117\ 457\ 574$ & $156\ 265\ 062$ & $75.17\%$ & $20$ & $133\ 946\ 795$
& $176\ 476\ 581$ & $75.9\%$%

\label{ta:Stats}	
\end{longtable}\endgroup}

Consider the columns $\widetilde{\mathcal{G}}_{5,X}$ and $\widetilde{\mathcal{G}}_{7,X}$.  In \cite{ckv}, the authors  proved the following asymptotic formulas
\begin{align*}
\widetilde{\mathcal{G}}_{5,X} &= \widetilde{\gamma}_5X^{1/6} + O(X^{1/12}),\text{ and} \\
\widetilde{\mathcal{G}}_{7,X} &= \widetilde{\gamma}_7X^{1/12} + O(X^{1/24})
\end{align*}
for explicit constants $\widetilde{\gamma}_5$ and $\widetilde{\gamma}_7$.  Therefore, we would expect to see in the data above that the quotients
\begin{align*}
\frac{\mathcal{G}_{5,10X}}{\mathcal{G}_{5,X}} &= 10^{1/6} \approx 1.4678, \text{ and} \\
\frac{\mathcal{G}_{7,10X}}{\mathcal{G}_{7,X}} &= 10^{1/12} \approx 1.2115
\end{align*}
are constant.  Because we used the logarithmic height (\ref{log_ht}) to generate our data, versus the height function 
\[
{\rm Ht}(y^2 = x^3 + ax+b) = \max (\abs{4a^3},27b^2)
\]
of \cite{ckv}, we cannot expect to see the ratios $10^{1/6}$ and $10^{1/12}$ in our data, but we should see that the ratios appear to be constant if we replace $X$ with $10X$.  This is indeed the case:
\begin{center}
    \begin{table}[h]
\begin{tabular}{rr||rr} 
$X$ & $\widetilde{\mathcal{G}}_{5,10X}/\widetilde{\mathcal{G}}_{5,X}$ & $X$ &    $\widetilde{\mathcal{G}}_{7,10X}/\widetilde{\mathcal{G}}_{7,X}$\\
\hline
4 & 7.4165 & 14 & 2.6197 \\
5 & 7.3768 & 15 & 2.6260 \\
6 & 7.3905 & 16 & 2.6341 \\
7 & 7.3893 & 17 & 2.6408\\
8 & 7.3874 & 18 & 2.6444\\
9 & 7.3903 & 19 & 2.6231
\end{tabular}
\caption{Ratios $\widetilde{\mathcal{G}}_{\ell,10X}/\widetilde{\mathcal{G}}_{\ell,X}$}
\end{table}
\end{center}

\vspace{-3em}

We do not claim to have an explicit asymptotic for the numbers $\mathcal{N}_{\ell,X}$, but we can perform the same experiment on the ratios to see that they, too, appear to be constant:

\begin{center}
    \begin{table}[h]
\begin{tabular}{rr||rr} 
$X$ & $\widetilde{\mathcal{N}}_{5,10X}/\widetilde{\mathcal{N}}_{5,X}$ & $X$ &    $\widetilde{\mathcal{N}}_{7,10X}/\widetilde{\mathcal{N}}_{7,X}$\\
\hline
4 & 7.9363 & 14 & 2.7583\\
5 & 7.7213 & 15 & 2.7541 \\
6 & 7.6550 & 16 & 2.7494 \\
7 & 7.5875 & 17 & 2.7458\\
8 & 7.5505 & 18 & 2.7435\\
9 & 7.5228 & 19 & 2.7156
\end{tabular}
\caption{Ratios $\widetilde{\mathcal{N}}_{\ell,10X}/\widetilde{\mathcal{N}}_{\ell,X}$}
\end{table}
\end{center}

\vspace{-3em}

We conclude with a discussion of how the data in Table \ref{coprimethm} was computed. Let $T=C_{\ell}$ for $\ell\in\left\{  5,7\right\}  $, and let $\alpha_{T}=\alpha_{T}(a,b)$ and $\beta_{T}=\beta_{T}(a,b)$ denote the invariants $c_{4}$ and $c_{6}$, respectively, of $E_{T}$. These polynomials are given explicitly in Tables 4 and 5, respectively, of \cite{Barrios}. Since $E_{T}$ is given by a global minimal model, we have that
\[
\operatorname*{ht}(E_{T})=\frac{1}{12}\max\log\left\{  \left\vert \alpha
_{T}^3\right\vert ,\beta_{T}^2\right\}  .
\]
Next, let $\mathcal{E}_{T}\ $denote the set of $\mathbb{Q}$-isomorphism classes of elliptic curves $E$ such that $T\hookrightarrow E(\mathbb{Q})$. Then, we have a surjective map $\psi:\left\{  \left(  a,b\right)  \in\mathbb{N}^{2}\mid\gcd(a,b)=1\right\}  \rightarrow\mathcal{E}_{T}$ given by $\psi(a,b)=E_{T}(a,b)$. We note that this map is not necessarily injective as distinct pairs $\left(  a,b\right)  $ and $\left(  a^{\prime},b^{\prime}\right)  $ may yield the same $\mathbb{Q}$-isomorphic elliptic curve. However, such duplicates can be removed by considering the reduced minimal models of $\psi(a,b)$ and $\psi(a^{\prime },b^{\prime})$. Here, we recall that the reduced minimal model of an elliptic curve is unique up to $\mathbb{Q}$-isomorphism \cite{MR1628193}. In fact, \cite{barrios2023red} gives necessary and sufficient conditions on the parameters of $E_{T}$ to determine the reduced minimal model of $E_{T}$. This gives rise to a one-to-one correspondence between $\mathcal{E}_{T}$ and $\left\{  \left(  a,b\right)  \in\mathbb{N}^{2}\mid\gcd(a,b)=1\right\}  /\sim_{T}$, where $\sim_{T}$ is the equivalence relation defined by $\left(  a,b\right)  \sim_{T}\left(  a^{\prime},b^{\prime }\right)  $ if and only if $E_{T}(a,b)$ and $E_{T}(a^{\prime},b^{\prime})$ are $\mathbb{Q}$-isomorphic.

Next, let
\[
\Lambda_{T}=\left\{
\begin{array}
[c]{cl}%
\left\{  \left(  a,b\right)  \in\mathbb{N}^{2}\mid\gcd(a,b)=1,ab\text{ not a }5\text{th-power}\right\}  /\sim_{T} &
\text{if }T=C_{5},\\
\left\{  \left(  a,b\right)  \in\mathbb{N}^{2}\mid\gcd(a,b)=1\right\}  /\sim_{T} & \text{if }T=C_{7}.
\end{array}
\right.
\]
By the above, Lemma \ref{LemKatz}, and the fact that there is no isogeny class over $\mathbb{Q}$ consisting of two distinct elliptic curves with a $7$-torsion point, we have that $\mathcal{\widetilde{G}}_{7}$ is bijective to $\Lambda_{C_{7}}$. For $T=C_{5}$, we have by \cite[Lemma 4.3.3]{ckv} that $E_{T}$ and $\widetilde{E}_{T}$ both have a $5$-torsion point if and only if $ab$ is a fifth power. This, coupled with our above discussion and Lemma \ref{LemKatz} gives a one-to-one correspondence between $\mathcal{\widetilde{G}}_{5}$ and $\Lambda_{C_{5}}$. Finally, the explicit description of $\operatorname*{ht}(E_{T})$ allows us to demonstrate that if $\operatorname*{ht}(E_{T})<10$ (resp. $20$) for $T=C_{5}$ (resp. $C_{7}$), then $a+b<55\ 000$. So to compute the data in Table \ref{coprimethm}, we first compute $\left\{  \left(a,b\right)  \in\mathbb{N}^{2}\mid\gcd(a,b)=1,a+b<55\ 000\right\}  $. For each pair in this set, we compute the reduced minimal model of the corresponding elliptic curves. After removing duplicates, we obtain subsets $\widetilde{\Lambda}_{T}$ that are a complete set of representatives for $\widetilde{\Lambda}_{T}$. In particular, we obtain a subset in one-to-one correspondence with $\mathcal{\widetilde{G}}_{\ell,X}$ for $X=10$ (resp. $20$) if $T=C_{5}$ (resp. $C_{7})$. Theorem \ref{coprimethm} then gives us the required assumptions to find the subset of $\widetilde{\Lambda}_{T}$ that is
in one-to-one correspondence with $\mathcal{N}_{\ell,X}$.

\bibliographystyle{plain}
\bibliography{main}

\begin{thebibliography}{10}

\bibitem{Baaziz}
Houria Baaziz.
\newblock Equations for the modular curve {$X_1(N)$} and models of elliptic
  curves with torsion points.
\newblock {\em Math. Comp.}, 79(272):2371--2386, 2010.

\bibitem{Barrios}
Alexander~J. Barrios.
\newblock Minimal models of rational elliptic curves with non-trivial torsion.
\newblock {\em Res. Number Theory}, 8(1):Paper No. 4, 39, 2022.

\bibitem{barrios_ig}
Alexander~J. Barrios.
\newblock Explicit classification of isogeny graphs of rational elliptic
  curves.
\newblock {\em Int. J. Number Theory}, 19(4):913--936, 2023.

\bibitem{barrios2023red}
Alexander~J. {Barrios}.
\newblock {Reduced minimal models and torsion}.
\newblock {\em arXiv e-prints}, page arXiv:2301.09488, January 2023.
\newblock \url{https://arxiv.org/abs/2301.09488}.

\bibitem{GitHubIsog}
Alexander~J. Barrios and John Cullinan.
\newblock Code for the tamagawa numbers of elliptic curves with an
  $\ell$-isogeny.
\newblock \url{https://github.com/alexanderbarrios/tamagawa_numbers_isogenies},
  2024.

\bibitem{br}
Alexander~J. Barrios and Manami Roy.
\newblock Local data of rational elliptic curves with nontrivial torsion.
\newblock {\em Pacific J. Math.}, 318(1):1--42, 2022.

\bibitem{online}
Alexander~J. Barrios and Manami Roy.
\newblock Online supplement to local data of rational elliptic curves with
  nontrivial torsion.
\newblock {\em Pacific J. Math.}, 318(1):1--42, 2022.

\bibitem{br2}
Alexander~J. Barrios and Manami Roy.
\newblock Representations attached to elliptic curves with a non-trivial odd
  torsion point.
\newblock {\em Bull. Lond. Math. Soc.}, 54(5):1846--1861, 2022.

\bibitem{magma}
Wieb Bosma, John Cannon, and Catherine Playoust.
\newblock The {M}agma algebra system. {I}. {T}he user language.
\newblock {\em J. Symbolic Comput.}, 24(3-4):235--265, 1997.
\newblock Computational algebra and number theory (London, 1993).

\bibitem{MR4203041}
Garen Chiloyan and \'{A}lvaro Lozano-Robledo.
\newblock A classification of isogeny-torsion graphs of {$\Bbb Q$}-isogeny
  classes of elliptic curves.
\newblock {\em Trans. London Math. Soc.}, 8(1):1--34, 2021.

\bibitem{MR1628193}
J.~E. Cremona.
\newblock {\em Algorithms for modular elliptic curves}.
\newblock Cambridge University Press, Cambridge, second edition, 1997.

\bibitem{ckv}
John Cullinan, Meagan Kenney, and John Voight.
\newblock On a probabilistic local-global principle for torsion on elliptic
  curves.
\newblock {\em J. Th\'{e}or. Nombres Bordeaux}, 34(1):41--90, 2022.

\bibitem{Dokchitsersquared}
Tim Dokchitser and Vladimir Dokchitser.
\newblock Local invariants of isogenous elliptic curves.
\newblock {\em Trans. Amer. Math. Soc.}, 367(6):4339--4358, 2015.

\bibitem{Erdos1953}
P.~Erd\"{o}s.
\newblock Arithmetical properties of polynomials.
\newblock {\em J. London Math. Soc.}, 28:416--425, 1953.

\bibitem{MR0457444}
Gerhard Frey.
\newblock Some remarks concerning points of finite order on elliptic curves
  over global fields.
\newblock {\em Ark. Mat.}, 15(1):1--19, 1977.

\bibitem{hs}
Robert Harron and Andrew Snowden.
\newblock Counting elliptic curves with prescribed torsion.
\newblock {\em J. Reine Angew. Math.}, pages 151--170, 2017.

\bibitem{Katz}
Nicholas~M. Katz.
\newblock Galois properties of torsion points on abelian varieties.
\newblock {\em Invent. Math.}, 62(3):481--502, 1981.

\bibitem{Kubert}
Daniel~Sion Kubert.
\newblock Universal bounds on the torsion of elliptic curves.
\newblock {\em Proc. London Math. Soc. (3)}, 33(2):193--237, 1976.

\bibitem{lmfdb}
The {LMFDB Collaboration}.
\newblock The {L}-functions and modular forms database.
\newblock \url{https://www.lmfdb.org}, 2024.
\newblock [Online; accessed 16 June 2024].

\bibitem{lorenzini}
Dino Lorenzini.
\newblock Torsion and {T}amagawa numbers.
\newblock {\em Ann. Inst. Fourier (Grenoble)}, 61(5):1995--2037, 2011.

\bibitem{mazur}
B.~Mazur.
\newblock Modular curves and the {E}isenstein ideal.
\newblock {\em Inst. Hautes \'Etudes Sci. Publ. Math.}, (47):33--186 (1978),
  1977.

\bibitem{melistas}
Mentzelos Melistas.
\newblock Tamagawa numbers of elliptic curves with torsion points.
\newblock {\em Arch. Math. (Basel)}, 119(2):155--165, 2022.

\bibitem{Papadopoulos1993}
Ioannis Papadopoulos.
\newblock Sur la classification de {N}\'{e}ron des courbes elliptiques en
  caract\'{e}ristique r\'{e}siduelle {$2$} et {$3$}.
\newblock {\em J. Number Theory}, 44(2):119--152, 1993.

\bibitem{schulz}
William~C. Schulz.
\newblock Cubics with a rational root.
\newblock {\em Math. Mag.}, 64(3):172--175, 1991.

\bibitem{silverman2}
Joseph~H. Silverman.
\newblock {\em Advanced topics in the arithmetic of elliptic curves}, volume
  151 of {\em Graduate Texts in Mathematics}.
\newblock Springer-Verlag, New York, 1994.

\bibitem{Pap}
Joseph~H. Silverman.
\newblock {\em Advanced topics in the arithmetic of elliptic curves}, volume
  151 of {\em Graduate Texts in Mathematics}.
\newblock Springer-Verlag, New York, 1994.

\bibitem{sagemath}
{The Sage Developers}.
\newblock {\em {S}ageMath, the {S}age {M}athematics {S}oftware {S}ystem
  ({V}ersion x.y.z)}, YYYY.
\newblock {\tt https://www.sagemath.org}.

\bibitem{velu}
Jacques V\'{e}lu.
\newblock Isog\'{e}nies entre courbes elliptiques.
\newblock {\em C. R. Acad. Sci. Paris S\'{e}r. A-B}, 273:A238--A241, 1971.

\bibitem{zywina}
David Zywina.
\newblock On the possible images of the mod ell representations associated to
  elliptic curves over q.
\newblock {\em arXiv: Number Theory}, 2015.
\newblock {\tt https://arxiv.org/abs/1508.07660}.

\end{thebibliography}

\end{document}